\def\tab(#1){\mbox{\small$\young(#1)$}\,}
\def\({\big(}
\def\){\big)}
\let\Sym=\BS
\newcommand{\N}{\mathbb N}
\newcommand{\Z}{\mathbb Z}
\DeclareMathAlphabet{\mathpzc}{OT1}{pzc}{m}{it}
\newcommand{\Q}{\mathbb Q}
\def\tlam{{\mathfrak t}^\lam}
\def\tllam{{\mathfrak t}_\lam}
\def\s{{\mathfrak s}}
\def\t{{\mathfrak t}}
\def\u{{\mathfrak u}}
\def\v{{\mathfrak v}}
\newcommand{\lam}{\lambda}
\newcommand\blam{{\boldsymbol\lambda}}
\DeclareMathOperator\Shape{Shape}
\DeclareMathOperator\Std{Std}
\def\SStd(#1,#2){\Std^\Lambda_{#2}(#1)}
\renewcommand\t{\mathfrak{t}}
\renewcommand\u{\mathfrak{u}}
\newcounter{main}
\theoremstyle{plain}
\swapnumbers \numberwithin{equation}{section}
\newtheorem{prop}[equation]{Proposition}
\newtheorem{thm}[equation]{Theorem}
\newtheorem{cor}[equation]{Corollary}
\newtheorem{lem}[equation]{Lemma}
\newtheorem{conj}[equation]{Lusztig's Conjecture}
\theoremstyle{definition}
\newtheorem{dfn}[equation]{Definition}
\theoremstyle{remark}
\newtheorem{rem}[equation]{Remark}
  \gdef\set#1{\mathinner{\lbrace\,{\mathcode`\|"8000%
                                   \let|\midvert #1}\,\rbrace}}
\def\midvert{\egroup\mid\bgroup}
\begin{document}
\title
{On involutions in symmetric groups and a conjecture of Lusztig}

\keywords{Involutions, reduced $I_{\ast}$-expressions, braid $I_{\ast}$-transformations}

  \author{Jun Hu}
  \address{School of Mathematics and Statistics\\
  Beijing Institute of Technology\\
  Beijing, 100081, P.R. China}
  \email{junhu404@bit.edu.cn}

  \author{Jing Zhang}
   \address{School of Mathematics and Statistics\\
  Beijing Institute of Technology\\
  Beijing, 100081, P.R. China}
  \email{ellenbox@bit.edu.cn}

\bibliographystyle{andrew}

\begin{abstract} Let $(W, S)$ be a Coxeter system equipped with a fixed automorphism $\ast$ of order $\leq 2$ which preserves $S$. Lusztig (and with Vogan in some special cases)
have shown that the space spanned by set of ``twisted" involutions (i.e., elements $w\in W$ with $w^\ast=w^{-1}$) was naturally endowed with a module structure of the Hecke algebra of
$(W, S)$ with two distinguished bases, which can be viewed as twisted analogues of the well-known standard basis and Kazhdan-Lusztig basis. The transition matrix between these
bases defines a family of polynomials $P_{y,w}^{\sigma}$ which can be viewed as ``twisted" analogues of the well-known Kazhdan-Lusztig polynomials of $(W, S)$. Lusztig has conjectured that this module is isomorphic to the right ideal of the Hecke algebra (with Hecke parameter $u^2$) associated to $(W,S)$ generated by the element $X_{\emptyset}:=\sum_{w^\ast=w}u^{-\ell(w)}T_w$. In this paper we prove this conjecture in the case when $\ast=\text{id}$ and $W=\Sym_n$ (the symmetric group on $n$ letters). Our methods are expected to be generalised to all the other finite crystallographic Coxeter groups.
\end{abstract}

\maketitle

\section{Introduction}

Let $(W,S)$ be a fixed Coxeter system with length function $\ell: W\rightarrow\mathbb{N}$. If $w\in W$ then by definition
$$\ell(w):=\min\{k|w=s_{i_1}\dots s_{i_k} \text{ for some }s_{i_1},\dots,s_{i_k}\in S\}.$$
Let ``$\leq$" be the Bruhat partial ordering on $W$.
Let ``$\ast$" be a fixed automorphism of $W$ with order $\leq 2$ and such that $s^{\ast}\in S$ for any $s\in S$.

\begin{dfn} \label{twistedinvolutions} We define $$
I_{\ast}:=\bigl\{w\in W\bigm|w^{\ast}=w^{-1}\bigr\}.
$$
The elements of $I_{\ast}$ will be called twisted involutions.
\end{dfn}

If $\ast=\text{id}_W$ (the identity automorphism on $W$), then the elements of $I_{\ast}$ will be called involutions.

Let $v$ be an indeterminate over $\Z$ and $u:=v^2$. Set $\mathcal{A}:=\Z[u,u^{-1}]$. Let $\mathcal{H}_u$ be the Iwahori-Hecke algebra associated to $(W,S)$ with Hecke parameter $u^2$ and defined over $\mathcal{A}$. By definition, $\mathcal{H}_u$ is a free $\mathcal{A}$-module with basis $\{T_w\}_{w\in W}$. There is a unital $\mathcal{A}$-algebra structure on $\mathcal{H}_u$ with unit $T_1$ and such that $$\begin{aligned}
& T_w T_{w'}=T_{ww'}\quad\text{if $\ell(ww')=\ell(w)+\ell(w')$; and}\\
& (T_s+1)(T_s-u^2)=0\quad\text{for all $s\in S$.}
\end{aligned}
$$

Let $M$ be the free $\mathcal{A}$-module with basis $\{a_{w}|w\in I_{\ast}\}$. The following result was obtained by Lusztig and Vogan (\cite{LV1}) in the special case where $W$ is a Weyl group or an affine Weyl group, and by Lusztig (\cite{Lu1}) in the general case.

\begin{thm} \text{(\cite{LV1},\,\,\cite[Theorem 0.1]{Lu1})} \label{LVaction} There is a unique $\mathcal{H}_u$-module structure on $M$ such that for any $s\in S$ and any $w\in I_\ast$ we have that $$\begin{aligned}
& T_s a_w=ua_w+(u+1)a_{sw}\quad\text{if\, $sw=ws^{\ast}>w$;}\\
& T_s a_w=(u^2-u-1)a_w+(u^2-u)a_{sw}\quad\text{if\, $sw=ws^{\ast}<w$;}\\
& T_s a_w=a_{sws^{\ast}}\quad\text{if\, $sw\neq ws^{\ast}>w$;}\\
& T_s a_w=(u^2-1)a_w+u^2a_{sws^{\ast}}\quad\text{if\, $sw\neq ws^{\ast}<w$.}\\
\end{aligned}
$$
\end{thm}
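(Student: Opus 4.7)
Uniqueness follows immediately: since $\mathcal{H}_u$ is generated as an $\mathcal{A}$-algebra by $\{T_s:s\in S\}$, any $\mathcal{H}_u$-module structure on $M$ is determined by how the generators act. For existence, the plan is to define $\mathcal{A}$-linear endomorphisms $\tau_s\in\End_\mathcal{A}(M)$ via the four displayed formulas and verify that they satisfy the defining relations of $\mathcal{H}_u$, namely the quadratic relation $\tau_s^2=(u^2-1)\tau_s+u^2\cdot\mathrm{id}$ and the braid relations $\underbrace{\tau_s\tau_t\cdots}_{m_{st}}=\underbrace{\tau_t\tau_s\cdots}_{m_{st}}$ for $s\neq t$ with $m_{st}<\infty$. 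A preliminary check confirms that the formulas are well-defined on $M$: $sw\in I_\ast$ in the first two cases (using $sw=ws^\ast$), and $sws^\ast\in I_\ast$ in the last two (using $w^\ast=w^{-1}$ and $(s^\ast)^2=1$).

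The quadratic relation is a straightforward case-by-case check on each basis vector $a_w$. The four cases pair up naturally: if $a_w$ falls under case~(A) (so $sw=ws^\ast>w$), then in computing $\tau_s^2 a_w$ we encounter $\tau_s a_{sw}$, which is governed by case~(B) because $s(sw)=w<sw$ and $s(sw)=(sw)s^\ast$; similarly (C) pairs with (D). In each pair, expanding $\tau_s^2 a_w$ via the formulas and comparing with $(u^2-1)\tau_s a_w+u^2 a_w$ reduces to a short polynomial identity in $u$.

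The braid relations are the main obstacle. The strategy I would follow is to localize the problem on orbits of the dihedral subgroup $W_{st}=\langle s,t\rangle$ acting on $I_\ast$ via the Richardson--Springer twisted action, in which each generator $s$ sends $x$ to $sxs^\ast$ when $sx\neq xs^\ast$ and to $sx$ when $sx=xs^\ast$, and likewise for $t$. Inspection of the four defining formulas shows that for every such orbit $\Omega$, the $\mathcal{A}$-submodule $M_\Omega=\bigoplus_{w\in\Omega}\mathcal{A}\,a_w$ is stable under both $\tau_s$ and $\tau_t$, so it suffices to verify the braid relation on each $M_\Omega$ separately.

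Each orbit is finite and, because $W_{st}$ is dihedral of known order, falls into a short list of combinatorial types, classified by how $\ell$ restricts to $\Omega$ and by the behaviour of $\ast$ on $\{s,t\}$ (in particular whether $s^\ast=s$ or $s^\ast=t$). For each type, verifying the braid relation becomes a finite computation in $\mathcal{A}$ whose outcome is essentially forced by the dihedral structure. The hard part is the systematic enumeration of orbit types together with careful bookkeeping of the signs and powers of $u$ that appear in each case; no single identity is deep, but collectively they demand patience, and this is where the bulk of the work lies.
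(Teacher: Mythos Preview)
The paper does not give its own proof of this theorem; it is stated with an attribution to \cite{LV1} and \cite[Theorem 0.1]{Lu1} and then used as a black box. So there is no in-paper argument to compare your proposal against.

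That said, your outline is the standard one and is essentially how the cited references establish the result: define the operators $\tau_s$ by the four formulas, verify the quadratic relation by pairing cases (A)$\leftrightarrow$(B) and (C)$\leftrightarrow$(D), and then reduce the braid relations to a finite check on each orbit of the dihedral subgroup $\langle s,t\rangle$ acting on $I_\ast$ via the Richardson--Springer operation (which is precisely the operation $\ltimes$ of Definition~\ref{twistedinvolutions2}). Your observation that $M_\Omega$ is $\tau_s,\tau_t$-stable is correct and is the key structural point. Where your proposal remains a sketch is exactly where you say it does: the enumeration of dihedral orbit types and the case-by-case verification of the braid relation on each $M_\Omega$ is genuinely laborious (Lusztig in \cite{Lu1} carries this out uniformly for arbitrary $m_{st}$, which requires some additional organisation beyond a bare case split). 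Nothing in your plan is wrong, but as written it stops at the point where the real computation begins.
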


Set $\underline{\mathcal{A}}:=\Z[v,v^{-1}]$. Then $\mathcal{A}$ can be naturally regarded as a subring of $\underline{\mathcal{A}}$ because $u=v^2$. Let
$\underline{\mathcal{H}}:=\underline{\mathcal{A}}\otimes_{\mathcal{A}}\mathcal{H}_u$.
Let $-: \underline{\mathcal{A}}\rightarrow\underline{\mathcal{A}}$ be the ring involution such that $\overline{v^n}=v^{-n}$ for $n\in\Z$. We denote by $-: \underline{\mathcal{H}}\rightarrow\underline{\mathcal{H}}$
the ring involution such that $\overline{v^nT_x}=v^{-n}T_{x^{-1}}^{-1}$ for any $x\in W, n\in\N$. Then ``$-$" restricts to the unique ring involution of $\mathcal{H}_u$ such that $\overline{u^nT_x}=u^{-n}T_{x^{-1}}^{-1}$ for any $x\in W, n\in\N$ (cf.\,\, \cite{KL}).

In\,\,\cite{LV1} and\,\,\cite[Theorem 0.2]{Lu1}, Lusztig and Vogan have shown that there exists a unique $\Z$-linear map $-: M\rightarrow M$ such that $\overline{hm}=\overline{h}\overline{m}$ for all $h\in\mathcal{H}_u$, $m\in M$ and $\overline{a_1}=a_1$. For any $m\in M$, $\overline{\overline{m}}=m$. Moreover, for any $w\in I_\ast$, $\overline{a_w}=(-1)^{\ell(w)}T_{w^{-1}}^{-1}a_{w^{-1}}$,

Set $\underline{M}:=\underline{\mathcal{A}}\otimes_{\mathcal{A}}M$. The map ``$-: M\rightarrow M$" can be naturally extended to a $\Z$-linear map $-:\underline{M}\rightarrow\underline{M}$ such that $\overline{v^nm}=v^{-n}\overline{m}$
for $m\in \underline{M}$, $n\in\Z$. For each $w\in I_\ast$, Lusztig and Vogan have proved further that there is a unique element $$
A_w=v^{-\ell(w)}\sum_{y\in I_\ast, y\leq w}P_{y,w}^{\sigma}a_y\in \underline{M},
$$
where $P_{y,w}^{\sigma}\in\Z[u]$ such that $\overline{A_w}=A_w$, $P_{w,w}^{\sigma}=1$ and for any $y\in I_\ast$, $y<w$, we have $\deg P_{y,w}^{\sigma}\leq(\ell(w)-\ell(y)-1)/2$. Furthermore, the elements $\{A_w|w\in I_\ast\}$ form an $\underline{\mathcal{A}}$-basis of $\underline{M}$. The polynomials $P_{w,w}^{\sigma}$ can be viewed as a ¡°twisted¡± analogue of the well-known Kazhdan-Lusztig polynomial $P_{y,w}$ of $(W, S)$ (\cite{KL}), and the $\underline{\mathcal{A}}$-basis $\{A_w|w\in I_\ast\}$ can be viewed  as a ¡°twisted¡± analogue of the well-known Kazhdan-Lusztig basis $C'_w$ (\cite[1.1.c]{KL}).

\begin{dfn} \text{(\cite{Lu2})} Define $$
X_{\emptyset}:=\sum_{x\in W, x^\ast=x}u^{-\ell(x)}T_x .
$$
\end{dfn}

Let $\Q(u)$ be the field of rational functions on $u$. Set $\mathcal{H}^{\Q(u)}:=\Q(u)\otimes_{\mathcal{A}}\mathcal{H}_u$. In\,\, \cite[3.4(a)]{Lu2}, Lusztig proposed the following conjecture:

\begin{conj} \text{(\cite[3.4(a)]{Lu2})} \label{LC} With the notations as above, there is a unique isomorphism of $\mathcal{H}^{\Q(u)}$-modules
$\eta: \Q(u)\otimes_{\mathcal{A}}M\cong \mathcal{H}^{\Q(u)}X_{\emptyset}$ such that $a_1\mapsto X_{\emptyset}$.
\end{conj}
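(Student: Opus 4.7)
The plan is to construct $\eta$ explicitly and then match $\Q(u)$-dimensions. I would begin by showing that $a_1$ generates $\Q(u)\otimes_{\mathcal{A}}M$ as an $\mathcal{H}^{\Q(u)}$-module. The induction is on $\ell(w)$: for any $w\in I_\ast$ with $\ell(w)\geq 1$, pick $s\in S$ with $sw<w$; inverting the relevant line of Theorem~\ref{LVaction} over $\Q(u)$ expresses $a_w$ as an $\mathcal{H}^{\Q(u)}$-linear combination of $a_{sw}$ or $a_{sws^{\ast}}$ (both of strictly smaller length in $I_\ast$). Cyclicity immediately yields uniqueness of $\eta$, and it reduces the theorem to proving that the surjection $\mathcal{H}^{\Q(u)}\twoheadrightarrow\Q(u)\otimes M$ defined by $h\mapsto h\cdot a_1$ and the surjection $\mathcal{H}^{\Q(u)}\twoheadrightarrow\mathcal{H}^{\Q(u)}X_\emptyset$ defined by $h\mapsto h\cdot X_\emptyset$ have the same kernel; equivalently, that $\dim_{\Q(u)}\mathcal{H}^{\Q(u)}X_\emptyset=|I_\ast|$.

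To get the required lower bound on $\dim_{\Q(u)}\mathcal{H}^{\Q(u)}X_\emptyset$, I would construct, for each $w\in I_\ast$, a canonical element $X_w\in\mathcal{H}^{\Q(u)}X_\emptyset$, with $X_1=X_\emptyset$, using the combinatorial machinery of reduced $I_\ast$-expressions and braid $I_\ast$-transformations (whence the paper's keywords). The four cases of Theorem~\ref{LVaction} dictate, for each $s\in S$, a renormalised Hecke operator $\tau_s$ which plays on the Hecke-algebra side the role that $T_s$ plays on the Lusztig--Vogan-module side. Setting
\[
X_w\;:=\;\tau_{s_{i_r}}\cdots\tau_{s_{i_1}}\,X_\emptyset
\]
for any reduced $I_\ast$-expression $(s_{i_1},\dots,s_{i_r})$ of $w$, I would verify independence of the choice of reduced $I_\ast$-expression by showing that each elementary braid $I_\ast$-transformation preserves $X_w$---a finite case-by-case check using only the quadratic and braid relations in $\mathcal{H}_u$. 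A Matsumoto-type theorem for reduced $I_\ast$-expressions (established earlier in the paper for $\Sym_n$) then makes the elements $X_w$ well-defined.

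The crux, and where I expect the main technical difficulty, is proving that $\{X_w:w\in I_\ast\}$ is $\Q(u)$-linearly independent in $\mathcal{H}^{\Q(u)}$. I would attempt this by expanding each $X_w$ in the standard basis $\{T_x\}_{x\in W}$ and extracting a triangular leading-term pattern: for $w$ an involution, show that the coefficient of $T_w$ in $X_w$ is a nonzero element of $\Q(u)$, and that only Bruhat-smaller involutions appear as indices of nonzero $T_x$-coefficients in a suitable projection of $\mathcal{H}^{\Q(u)}$ onto the span of $\{T_x:x\in I_\ast\}$. The required estimates are expected to reduce, via the type-$A$ reduced $I_\ast$-expression combinatorics, to calculations with cycle types of involutions and explicit transpositions; this is where the argument leans essentially on $W=\Sym_n$ and where the generalisation to other finite crystallographic groups promised in the abstract will demand type-by-type adjustments. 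Once the triangularity is in hand, $\dim_{\Q(u)}\mathcal{H}^{\Q(u)}X_\emptyset\geq|I_\ast|$, and combining with the surjection from the first step promotes $\eta$ to the claimed isomorphism.
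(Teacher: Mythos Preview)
Your overall architecture matches the paper closely: the cyclicity of $a_1$, the construction of elements $X_w=\eta_0(a_w)$ via renormalised Hecke operators along a reduced $I_\ast$-expression, and the well-definedness check by reducing to the basic braid $I_\ast$-moves are exactly what the paper does in Lemmas~\ref{keylem1}--\ref{keylem2}, relying on the Matsumoto-type Theorem~\ref{mainthm0}. So on this half of the argument you are in agreement with the paper.

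The genuine divergence is in how the lower bound $\dim_{\Q(u)}\mathcal{H}^{\Q(u)}X_\emptyset\ge|I_\ast|$ is obtained. You propose a triangularity argument for the family $\{X_w\}$ in the standard basis $\{T_x\}$, restricted to the involution indices. The paper does \emph{not} do this; instead it works entirely on the representation-theoretic side. Using the Young seminormal basis $\{f_{\s\t}\}$ of the split semisimple algebra $\mathcal{H}^{\Q(u)}$, it shows directly (Lemma~\ref{lb}) that $f_{\tlam\tlam}X_\emptyset f_{\tllam\tllam}\neq 0$ for every $\lam\in\mathcal{P}_n$, via a double-coset decomposition of $X_\emptyset$ with respect to $(\Sym_\lam,\Sym_{\lam'})$ and the trivial-intersection property of the coset $\Sym_\lam w_\lam\Sym_{\lam'}$. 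This yields $\dim\mathcal{H}^{\Q(u)}X_\emptyset\ge\sum_{\lam}\#\Std(\lam)$, and Robinson--Schensted then converts the right-hand side into $|I_\ast|$. The seminormal-basis route is clean because it bypasses any analysis of the $T_x$-expansion of the $X_w$'s and never needs to know what the $X_w$ look like at all; your triangularity claim, by contrast, is not obvious (recall that for $\ast=\mathrm{id}$ one has $X_\emptyset=\sum_{x\in\Sym_n}u^{-\ell(x)}T_x$, a sum over \emph{all} of $\Sym_n$, so the $T_x$-supports of the $X_w$ are large and the projection onto involution indices is not a priori well-behaved), and you have not indicated a mechanism that would force the off-diagonal vanishing or the nonvanishing of the diagonal entry. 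If you pursue your line you would need to supply that mechanism; the paper's approach avoids the issue entirely.
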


The purpose of this paper is to give a proof of this conjecture in the case when $\ast=\text{id}_W$ and $W$ is the symmetric group $\Sym_n$ on $n$ letters (i.e., the Weyl group of type $A_{n-1}$) for any $n\in\N$. Our methods are expected to be generalised to all the other finite crystallographic Coxeter groups. The case when $\ast=\text{id}_W$ and $W$ is the Weyl group of types $D_{n}$ and $B_n$ will be dealt with in forthcoming papers. As a byproduct of this paper, we show that any two reduced $I_\ast$-expressions for an involution in $\Sym_n$ can be transformed into each other through a series of braid $I_\ast$-transformations, which can be viewed as a ``twisted" analogue of a  well-known classical fact of Matsumoto (\cite{Mathas}) which said that any two reduced expressions for an element in $\Sym_n$ can be transformed into each other through a series of braid transformations.

The paper is organised as follows. In Section 2, we first recall some preliminary and known results (due to Hultman) on reduced $I_{\ast}$-expressions for twisted involutions, then we introduce a new notion of braid $I_\ast$-transformations and show in Lemma \ref{braid0} that any braid $I_\ast$-transformations on reduced $I_\ast$-sequence for a given involution in $\Sym_n$ do not change the involution itself. We also give a number of technical lemmas which will be used in the next section. In Section 3, we prove in Theorem \ref{mainthm0} that any two reduced $I_\ast$-expressions for an involution in $\Sym_n$ can be transformed into each other through a series of braid $I_\ast$-transformations. This key result will play a central role in the proof of the main result Theorem \ref{mainthm2}. In Section 4, we use the Young seminormal bases theory for the semisimple Iwahori-Hecke algebra of type $A_{n-1}$ to show that the dimension of $\mathcal{H}^{\Q(u)}X_{\emptyset}$ is bigger or equal than the number of involutions in $W$. The main result of this paper is given in Section 5, where we prove Lusztig's Conjecture \ref{LC} in the case when $\ast=\text{id}_W$ and $W$ is symmetric group $\Sym_n$ for any $n\in\N$.
\medskip

\section*{Acknowledgements}

Both authors were supported by the National Natural Science Foundation of China (NSFC 11171021, NSFC 11471315). The first author also thanks Professor Hultman for some helpful discussions.

\bigskip
\section{reduced $I_{\ast}$-expressions}

In this section we shall give some preliminary and known results on reduced $I_{\ast}$-expressions for twisted inviolutions.

\begin{dfn} \label{twistedinvolutions2} For any $w\in I_\ast$ and $s\in S$, we define $$
s\ltimes w:=\begin{cases} sw &\text{if $sw=ws^\ast$;}\\
sws^\ast &\text{if $sw\neq ws^\ast$.}
\end{cases}
$$
For any $w\in I_\ast$ and $s_{i_1},\cdots,s_{i_k}\in S$, we define $$
s_{i_1}\ltimes s_{i_2}\ltimes\cdots\ltimes s_{i_k}\ltimes w:=s_{i_1}\ltimes\bigl(s_{i_2}\ltimes\cdots\ltimes (s_{i_k}\ltimes w)\cdots\bigr) .
$$
\end{dfn}

\begin{lem} \label{square} For any $w\in I_\ast$ and $s\in S$, we have that $$
s\ltimes(s\ltimes w)=w .
$$
\end{lem}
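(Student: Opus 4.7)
The plan is a direct case analysis based on the two cases in Definition \ref{twistedinvolutions2}, preceded by the observation that the operation $\ltimes$ really does send $I_\ast$ into itself, so that iterating it makes sense.

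First I would verify that if $w\in I_\ast$ then $s\ltimes w\in I_\ast$. In the case $sw=ws^\ast$, note that $(sw)^\ast=s^\ast w^{-1}$ and $(sw)^{-1}=w^{-1}s$, and the hypothesis $sw=ws^\ast$ rearranges to exactly $s^\ast w^{-1}=w^{-1}s$, so $(sw)^\ast=(sw)^{-1}$. In the case $sw\neq ws^\ast$, a direct calculation gives $(sws^\ast)^\ast=s^\ast w^{-1}s=(sws^\ast)^{-1}$ using $w^\ast=w^{-1}$ and $(s^\ast)^2=1$. Hence $s\ltimes w\in I_\ast$ in either case.

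Next I would compute $s\ltimes(s\ltimes w)$ by splitting on the defining condition.

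\textbf{Case 1:} $sw=ws^\ast$. Then $s\ltimes w=sw$. To apply $\ltimes$ again I check the condition: $s(sw)=w$ while $(sw)s^\ast=s(ws^\ast)=s(sw)=w$. So $s(sw)=(sw)s^\ast$, putting us again in the first branch, and $s\ltimes(sw)=s\cdot sw=w$.

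\textbf{Case 2:} $sw\neq ws^\ast$. Then $s\ltimes w=sws^\ast$. Now $s(sws^\ast)=ws^\ast$ and $(sws^\ast)s^\ast=sw$. Since $sw\neq ws^\ast$, these are not equal, so we are in the second branch of the definition and $s\ltimes(sws^\ast)=s(sws^\ast)s^\ast=ws^\ast s^\ast=w$.

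In both cases we obtain $s\ltimes(s\ltimes w)=w$, completing the proof. There is no serious obstacle here: the statement is essentially an involutivity check built into the definition, and the only subtlety is to remember to re-evaluate the branch condition after the first application of $\ltimes$, which is why the two cases split the way they do.
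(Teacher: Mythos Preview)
Your proof is correct and follows essentially the same two-case analysis as the paper's proof; the only addition is your preliminary verification that $s\ltimes w\in I_\ast$, which the paper leaves implicit.
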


\begin{proof} If $sw=ws^\ast$, then $s\ltimes w=sw$. In this case, $s(sw)=w=(sw)s^\ast$, so by definition, $$
s\ltimes(s\ltimes w)=s\ltimes(sw)=w .
$$

If $sw\neq ws^\ast$, then $s\ltimes w=sws^\ast$. Now $s(sws^\ast)=ws^\ast\neq sw=(sws^\ast)s^\ast$, so by definition, $$
s\ltimes(s\ltimes w)=s\ltimes(sws^\ast)=s(sws^\ast)s^\ast=w . $$
This completes the proof of the lemma. \end{proof}

\begin{rem} \label{rem1} In general, the operation $\ltimes: S\times I_\ast\rightarrow I_\ast$ does not extend to a group action of $W$ on $I_\ast$. For example, let $W=\Sym_4$ (the symmetric group on $\{1,2,3,4\}$), $w=s_2=(2,3)$, then
$$
s_1\ltimes(s_2\ltimes (s_1\ltimes w))=1\neq s_2s_1s_2=s_2\ltimes(s_1\ltimes(s_2\ltimes w)) .
$$
\end{rem}

It is well-known that every element $w\in I_\ast$ is of the form $w=s_{i_1}\ltimes s_{i_2}\ltimes\cdots\ltimes s_{i_k}$ for some $k\in\N$ and $s_{i_1},\cdots,s_{i_k}\in S$.

As we shall see later in this paper, the main obstacle for the failure of a group action in the example given in Remark \ref{rem1} is the fact that $s_1\ltimes(s_2\ltimes (s_1\ltimes w))$ is not a reduced $I_{\ast}$-expression in the sense of the following definition.

\begin{dfn} \label{Ireduced} (\!\cite{Hu1}, \cite{Hu2}) Let $w\in I_\ast$. If $w=s_{i_1}\ltimes s_{i_2}\ltimes\cdots\ltimes s_{i_k}$, where $k\in\N$, $s_{i_j}\in S$ for each $j$, then $(s_{i_1},\cdots,s_{i_k})$ is called an $I_\ast$-expression for $w\in I_\ast$. Such an $I_\ast$-expression for $w\in I_\ast$ is reduced if its length $k$ is minimal.
\end{dfn}

We regard the empty sequence $()$ as a reduced $I_\ast$-expression for $w=1$. If follows by induction on $\ell(w)$ that every element of $w\in I_\ast$ has a reduced $I_\ast$-expression.

\begin{lem}\label{rankfunc}(\cite{Hu1}, \cite{Hu2}) Let $w\in I_\ast$. Any reduced $I_\ast$-expression for $w$ has a common length. Let $\rho: I_\ast\rightarrow\N$ be the map which assign $w\in I_\ast$ to this common length. Then $(I_\ast,\leq)$ is a graded poset with rank function $\rho$. Moreover, if $s\in S$ then $\rho(s\ltimes w)=\rho(w)\pm 1$, and $\rho(s\ltimes w)=\rho(w)-1$ if and only if $\ell(sw)=\ell(w)-1$.
\end{lem}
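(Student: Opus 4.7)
The proof is due to Hultman, and the plan is to reduce everything to a \emph{Strong Exchange Property for $I_\ast$-expressions}, in direct parallel with Matsumoto's classical theorem for Coxeter groups.

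First I would establish the following length comparison: for $w\in I_\ast$ and $s\in S$,
\[
\ell(s\ltimes w)-\ell(w)=
\begin{cases}
+1 & \text{if } sw=ws^\ast \text{ and } sw>w,\\
-1 & \text{if } sw=ws^\ast \text{ and } sw<w,\\
+2 & \text{if } sw\neq ws^\ast \text{ and } sw>w,\\
-2 & \text{if } sw\neq ws^\ast \text{ and } sw<w.
\end{cases}
\]
The $\pm 1$ cases are immediate. The delicate point is the $\pm 2$ claim. Suppose, say, $sw\neq ws^\ast$ and $sw>w$ but $\ell(sws^\ast)=\ell(sw)-1=\ell(w)$. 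Starting from a reduced expression $sw=s\cdot(s_{j_1}\cdots s_{j_m})$ with $s_{j_1}\cdots s_{j_m}$ a reduced expression for $w$, the classical Exchange Condition applied to the length-decreasing multiplication $sw\cdot s^\ast$ forces the deleted letter to be either the leading $s$ (which yields $sws^\ast=w$, hence $sw=ws^\ast$, a contradiction), or an interior letter $s_{j_l}$ (which, after applying $\ast$ and taking inverses and using $w^\ast=w^{-1}$, forces $\ell(sw)<\ell(w)$, also a contradiction). The key consequence is that the sign of the $\ell$-change under $\ltimes$ always matches that of ordinary left multiplication by $s$.

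The main technical step is the \emph{Strong Exchange Property}: if $(s_{i_1},\ldots,s_{i_k})$ is an $I_\ast$-expression (not necessarily reduced) for $w$ and $s\in S$ satisfies $\ell(sw)<\ell(w)$, then there exists $j$ with $s\ltimes w=s_{i_1}\ltimes\cdots\ltimes\widehat{s_{i_j}}\ltimes\cdots\ltimes s_{i_k}$. I would prove this by induction on $k$: the case analysis above allows us to unfold the $I_\ast$-expression into an ordinary (possibly non-reduced) $S$-expression for $w$ of length at most $2k$, then apply the classical Strong Exchange Property in $W$ to locate a deletable simple reflection, and finally translate back through the $\ltimes$-operations using Lemma~\ref{square} (which guarantees the reversibility of each step) to identify the index $j$.

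Once Strong Exchange is in hand, everything else is formal. A Matsumoto-style induction on $\min(k,m)$ shows that any two reduced $I_\ast$-expressions $(s_{i_1},\ldots,s_{i_k})$ and $(s_{j_1},\ldots,s_{j_m})$ for $w$ have the same length, which we call $\rho(w)$. Prepending $s$ to a reduced $I_\ast$-expression for $s\ltimes w$, combined with Lemma~\ref{square}, gives $\rho(w)\le\rho(s\ltimes w)+1$; conversely, when $\ell(sw)<\ell(w)$, Strong Exchange applied to a reduced $I_\ast$-expression of $w$ produces an $I_\ast$-expression for $s\ltimes w$ of length $\rho(w)-1$, yielding $\rho(s\ltimes w)=\rho(w)-1$. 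The case $\ell(sw)>\ell(w)$ follows by swapping $w$ with $s\ltimes w$, since the case analysis above then guarantees $\ell(s(s\ltimes w))<\ell(s\ltimes w)$. Finally, $(I_\ast,\le)$ is graded with rank $\rho$ because each covering relation $y\lessdot x$ in the restricted Bruhat order corresponds, via Hultman's subword characterisation, to deleting exactly one letter from a reduced $I_\ast$-expression for $x$, so $\rho(y)=\rho(x)-1$. The main obstacle throughout is Step~2; all subsequent deductions then proceed essentially as in the classical Coxeter setting.
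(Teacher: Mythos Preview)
The paper does not supply its own proof of this lemma: it is quoted verbatim from Hultman's work \cite{Hu1,Hu2} and used as a black box, with only the subsequent Corollaries~\ref{length0} and~\ref{deletion2} and the Exchange Property (Proposition~\ref{exchange}) recorded for later use. So there is no ``paper's proof'' to compare against; your proposal is effectively a reconstruction of Hultman's argument.

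Your outline is broadly correct and follows Hultman's strategy. The length-change analysis in your first step is right, and your Exchange-Condition argument for the $\pm 2$ cases is the standard one (the interior-deletion branch works because $\ell(ws^\ast)=\ell((ws^\ast)^\ast)=\ell(w^{-1}s)=\ell(sw)$, contradicting $\ell(ws^\ast)\le\ell(w)-1$). One caution about your Step~2: the plan to ``unfold into an ordinary $S$-expression of length at most $2k$ and apply classical Strong Exchange'' is not quite enough as stated. The unfolded word has a constrained palindromic shape $s_{i_1}\cdots s_{i_k}\cdot s_{i_k}^{\ast\,\varepsilon_k}\cdots s_{i_1}^{\ast\,\varepsilon_1}$, and classical Strong Exchange only tells you that \emph{some} letter can be deleted; it does not a priori land you in a word that re-folds to an $I_\ast$-expression with a single $s_{i_j}$ removed. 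Hultman's actual argument proceeds by induction on $k$, comparing $s$ against the leftmost factor $s_{i_1}$ and using the first-step length analysis to control the descent; the translation back through $\ltimes$ is then immediate. If you tighten Step~2 to that inductive form rather than a one-shot unfold, the rest of your deductions (common length of reduced $I_\ast$-expressions, $\rho(s\ltimes w)=\rho(w)\pm1$, and gradedness via the subword/covering characterisation) go through exactly as you describe.
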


\begin{cor} \label{length0} Let $w\in I_\ast$ and $s\in S$. Suppose that $sw\neq ws^\ast$. Then $\ell(sw)=\ell(w)+1$ if and only if $\ell(ws^\ast)=\ell(w)+1$, and if and only if $\ell(s\ltimes w)=\ell(w)+2$. The same is true if we replace ``$+$" by ``$-$".
\end{cor}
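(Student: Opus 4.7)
I would first dispatch the first equivalence automatically. Since $w\in I_{\ast}$ gives $w^{\ast}=w^{-1}$ and the automorphism $\ast$ preserves length (it permutes $S$), one has
$$\ell(sw)=\ell((sw)^{\ast})=\ell(s^{\ast}w^{-1})=\ell((ws^{\ast})^{-1})=\ell(ws^{\ast}),$$
so $\ell(sw)=\ell(w)+1\Leftrightarrow\ell(ws^{\ast})=\ell(w)+1$ is immediate, and likewise with $-1$ in place of $+1$.

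For the second equivalence I would reduce to a single non-degeneracy claim. Left and right multiplication by a simple reflection each change length by $\pm 1$, so $\ell(sws^{\ast})\in\{\ell(w)-2,\ \ell(w),\ \ell(w)+2\}$ and $|\ell(sws^{\ast})-\ell(sw)|=1$. Under the hypothesis $sw\neq ws^{\ast}$ the definition of $\ltimes$ gives $s\ltimes w=sws^{\ast}$, so everything collapses to showing that the degenerate value $\ell(sws^{\ast})=\ell(w)$ is impossible; granted this, the sign of $\ell(sw)-\ell(w)$ forces the sign of $\ell(sws^{\ast})-\ell(w)$.

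To rule out $\ell(sws^{\ast})=\ell(w)$, I plan to invoke the Matsumoto--Tits exchange condition, splitting on whether $\ell(sw)=\ell(w)+1$ or $\ell(w)-1$. In the plus case, fix a reduced expression $w=s_{1}\cdots s_{k}$; then $sw=s\cdot s_{1}\cdots s_{k}$ is reduced of length $k+1$, and $\ell(sws^{\ast})=k$ means $s^{\ast}$ is a right descent of $sw$, so exchange yields a reduced expression for $sws^{\ast}$ by deleting a single letter from $s\,s_{1}\cdots s_{k}$. Deleting the initial $s$ gives $sws^{\ast}=w$, i.e.\ $sw=ws^{\ast}$, contradicting the hypothesis; deleting some $s_{i}$ gives $ws^{\ast}=s_{1}\cdots\widehat{s_{i}}\cdots s_{k}$ of length $\leq\ell(w)-1$, contradicting the first equivalence. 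The minus case is analogous: using a reduced expression $w=s\cdot t_{1}\cdots t_{k-1}$ (available because $s$ is a left descent of $w$), apply exchange to the right multiplication $w\cdot s^{\ast}$; deleting the initial $s$ gives $ws^{\ast}=sw$, while deleting some $t_{j}$ gives $\ell(sws^{\ast})\leq\ell(w)-2$, both contradictions.

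I expect the main obstacle to be the exchange-condition case analysis, though it is not deep. The conceptual point that makes the argument tight is that the ``delete the initial letter'' branch of exchange is \emph{exactly} the excluded equation $sw=ws^{\ast}$; the hypothesis of the corollary is precisely the ingredient that kills the bad branch.
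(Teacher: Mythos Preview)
Your argument is correct. The first equivalence is handled cleanly via the length-preserving automorphism, and the exchange-condition dichotomy in each direction is airtight: the branch ``delete the distinguished letter $s$'' is exactly the forbidden equation $sw=ws^{\ast}$, and the other branch contradicts the length equality $\ell(sw)=\ell(ws^{\ast})$ already established.

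This is, however, a genuinely different route from the paper's. The paper derives the corollary in one line from Lemma~\ref{rankfunc} (Hultman's result that $(I_{\ast},\leq)$ is graded with rank function $\rho$ satisfying $\rho(s\ltimes w)=\rho(w)\pm 1$, with the minus sign exactly when $\ell(sw)=\ell(w)-1$). Applying that lemma once to $w$ and once to $s\ltimes w$ (using $s\ltimes(s\ltimes w)=w$) pins down the sign of $\ell(sws^{\ast})-\ell(w)$ without any exchange-condition case analysis. Your approach trades this black-box citation for a self-contained Coxeter-theoretic argument: it is more elementary and does not rely on the theory of reduced $I_{\ast}$-expressions at all, at the cost of a short case split. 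Both are perfectly adequate here; yours has the advantage of being independent of the Hultman machinery, while the paper's is shorter given that Lemma~\ref{rankfunc} is already in hand.
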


\begin{proof} This follows from Lemma \ref{rankfunc}.
\end{proof}

\begin{cor} \label{deletion2} Let $w\in I_\ast$ and $s\in S$. Suppose that $\rho(w)=k$. If $sw<w$ then $w$ has a reduced $I_\ast$-expression which is of the form $s\ltimes s_{j_1}\ltimes\cdots\ltimes s_{j_{k-1}}$.
\end{cor}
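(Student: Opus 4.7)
The plan is to deduce the corollary directly from the two preceding lemmas. The hypothesis $sw<w$ means $\ell(sw)=\ell(w)-1$, so by Lemma \ref{rankfunc} we immediately get
$$\rho(s\ltimes w)=\rho(w)-1=k-1.$$
This is the only input we need from the grading; no further analysis of whether $s\ltimes w=sw$ or $s\ltimes w=sws^{\ast}$ is required.

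Write $w':=s\ltimes w\in I_{\ast}$. Since $\rho(w')=k-1$, by the definition of the rank function there exists a reduced $I_{\ast}$-expression
$$w'=s_{j_1}\ltimes s_{j_2}\ltimes\cdots\ltimes s_{j_{k-1}},$$
for some $s_{j_1},\dots,s_{j_{k-1}}\in S$. Apply $s\ltimes(-)$ to both sides and use Lemma \ref{square}, which gives $s\ltimes(s\ltimes w)=w$. This yields
$$w=s\ltimes w'=s\ltimes s_{j_1}\ltimes s_{j_2}\ltimes\cdots\ltimes s_{j_{k-1}},$$
so $w$ admits an $I_{\ast}$-expression of the required shape.

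Finally, I would check that this expression is reduced. Its length is $k$, which coincides with $\rho(w)$; since every $I_{\ast}$-expression of $w$ has length at least $\rho(w)$ (this is again Lemma \ref{rankfunc}), the length $k$ is minimal and the expression is reduced. There is no real obstacle here: the only subtle point is to recognise that $\rho(s\ltimes w)=\rho(w)-1$ is granted by Lemma \ref{rankfunc} from the ordinary length condition $\ell(sw)=\ell(w)-1$, so we are free to splice a reduced $I_{\ast}$-expression of $s\ltimes w$ under $s\ltimes(-)$ and invoke Lemma \ref{square} to recover $w$.
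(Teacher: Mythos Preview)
Your argument is correct and is exactly the approach the paper takes: its proof is the one-liner ``This follows from Lemma~\ref{rankfunc} and the fact (Lemma~\ref{square}) that $w=s\ltimes(s\ltimes w)$,'' and you have simply unpacked those two citations in full.
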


\begin{proof} This follows from Lemma \ref{rankfunc} and the fact (Lemma \ref{square}) that $w=s\ltimes(s\ltimes w)$.
\end{proof}

\begin{dfn} \label{ReducedSequence} Let $w\in I_\ast$ and $s_{i_1},\cdots,s_{i_k}\in S$. If $$
\rho(s_{i_1}\ltimes s_{i_2}\ltimes\cdots\ltimes s_{i_k}\ltimes w)=\rho(w)+k ,
$$
then we shall call the sequence $(s_{i_1},\cdots,s_{i_k},w)$ reduced, or $(s_{i_1},\cdots,s_{i_k},w)$  a reduced sequence.
\end{dfn}

In particular, any reduced $I_\ast$-expression for $w\in I_\ast$ is automatically a reduced sequence.
In the sequel, by some abuse of notations, we shall also call $(i_1,\cdots,i_k)$ a reduced sequence whenever $(s_{i_1},\cdots,s_{i_k})$ is a reduced sequence in the sense of Definition \ref{ReducedSequence}.

\begin{rem} Let $s_{i_1},\cdots,s_{i_k}\in S$ and $1\leq a\leq k$. We shall use the expression \begin{equation}\label{omit}
s_{i_1}\ltimes\cdots\ltimes s_{i_{a-1}}\ltimes s_{i_{a+1}}\ltimes\cdots\ltimes s_{i_k}
\end{equation}
to denote the element obtained from omitting ``$s_{i_a}\ltimes$" in the expression $s_{i_1}\ltimes\cdots\ltimes s_{i_k}$. In particular, if $a=1$ then (\ref{omit}) denotes the element $s_{i_2}\ltimes\cdots\ltimes s_{i_k}$; while if $a=k$ then (\ref{omit}) denotes the element $s_{i_1}\ltimes\cdots\ltimes s_{i_{k-1}}$.
This convention will be adopted throughout this paper.
\end{rem}

\begin{prop} \text{(Exchange Property,\,\, \cite[Prop. 3.10]{Hu2})} \label{exchange} Suppose $(s_{i_1}, \cdots,s_{i_k})$ is a reduced $I_\ast$-expression for $w\in I_\ast$ and that $\rho(s\ltimes s_{i_1}\ltimes s_{i_2}\ltimes\cdots\ltimes s_{i_k})<k$ for some $s\in S$. Then$$
s\ltimes s_{i_1}\ltimes s_{i_2}\ltimes\cdots\ltimes s_{i_k}=s_{i_1}\ltimes s_{i_2}\ltimes\cdots\ltimes{s_{i_{a-1}}}\ltimes{s_{i_{a+1}}}\ltimes\cdots\ltimes s_{i_k}
$$
for some $a\in\{1,2,\cdots,k\}$.
\end{prop}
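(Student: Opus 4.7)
The plan is to prove the statement by induction on $k=\rho(w)$. For the base case $k=1$, the element $w=s_{i_1}$, and the hypothesis $\rho(s\ltimes s_{i_1})<1$ forces $s\ltimes s_{i_1}=1$. A direct inspection of the two cases in Definition \ref{twistedinvolutions2} rules out the ``conjugation'' sub-case (since $s\cdot s^{\ast}$ has length $0$ or $2$, never $1$) and forces $s=s_{i_1}$, so $a=1$ suffices.

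For the inductive step, set $v:=s_{i_2}\ltimes\cdots\ltimes s_{i_k}$, so that $w=s_{i_1}\ltimes v$ and $(s_{i_2},\ldots,s_{i_k})$ is a reduced $I_\ast$-expression for $v$ with $\rho(v)=k-1$. By Lemma \ref{rankfunc}, $\rho(s\ltimes v)\in\{k-2,k\}$. In the first case $\rho(s\ltimes v)=k-2$, I apply the inductive hypothesis to the reduced $I_\ast$-expression $(s_{i_2},\ldots,s_{i_k})$ for $v$ to obtain $a\in\{2,\ldots,k\}$ with
$$
s\ltimes v=s_{i_2}\ltimes\cdots\ltimes\widehat{s_{i_a}}\ltimes\cdots\ltimes s_{i_k},
$$
and then verify the auxiliary identity $s\ltimes w=s_{i_1}\ltimes(s\ltimes v)$. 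This identity is not automatic because $\ltimes$ is not a group action (Remark \ref{rem1}), but it can be proved by a direct case analysis among the four possibilities of Definition \ref{twistedinvolutions2} applied to the pairs $(s,v)$ and $(s_{i_1},s\ltimes v)$, using Lemma \ref{square} and the rank constraint $\rho(s\ltimes w)=k-1$ given by Lemma \ref{rankfunc}.

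In the second case $\rho(s\ltimes v)=k$, the sequence $(s,s_{i_2},\ldots,s_{i_k})$ becomes a new reduced $I_\ast$-expression, this time for $s\ltimes v$. I would apply the inductive hypothesis to this expression with $s_{i_1}$ playing the role of the test reflection; the prerequisite $\rho(s_{i_1}\ltimes(s\ltimes v))<k$ follows from the given $\rho(s\ltimes w)<k$ via a case analysis linking $s_{i_1}\ltimes s\ltimes v$ with $s\ltimes s_{i_1}\ltimes v=s\ltimes w$. The resulting deletion then either removes the leading $s$, in which case we recover $s\ltimes w=v=s_{i_2}\ltimes\cdots\ltimes s_{i_k}$ and $a=1$ works, or removes some $s_{i_a}$ with $a\geq 2$, matching the desired form after re-inserting $s_{i_1}$ at the front.

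The main obstacle I anticipate is precisely the commutation of $s$ and $s_{i_1}$ under $\ltimes$: one must justify carefully when $s\ltimes s_{i_1}\ltimes(\cdot)$ may be rewritten as $s_{i_1}\ltimes s\ltimes(\cdot)$, since $\ltimes$ does not extend to a group action of $W$. The squaring identity (Lemma \ref{square}) and the rank function (Lemma \ref{rankfunc}), together with Corollary \ref{length0}, are the principal bookkeeping tools that tame these interactions and translate the classical Exchange Property for $(W,S)$ into the twisted setting on the graded poset $(I_\ast,\leq)$.
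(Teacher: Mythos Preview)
The paper does not give its own proof of this proposition; it is quoted verbatim from Hultman \cite[Prop.~3.10]{Hu2}. So there is no ``paper's proof'' to compare against, and your attempt must stand on its own.

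There is a genuine gap in your Case~2. You have a reduced $I_\ast$-expression $(s,s_{i_2},\ldots,s_{i_k})$ of length $k$ for the element $s\ltimes v$, which has $\rho(s\ltimes v)=k$. You then propose to ``apply the inductive hypothesis to this expression with $s_{i_1}$ playing the role of the test reflection''. But your induction is on $k=\rho(w)$, so the inductive hypothesis only supplies the Exchange Property for twisted involutions of rank strictly less than $k$. The element $s\ltimes v$ has rank exactly $k$, the same as $w$, so the inductive hypothesis does not apply. This is circular and cannot be repaired without an additional induction parameter or a different argument altogether.

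A second, softer gap affects both cases: you repeatedly need identities of the shape $s\ltimes s_{i_1}\ltimes v = s_{i_1}\ltimes s\ltimes v$ (or at least close control over how these two differ). You correctly flag that $\ltimes$ is not a group action (Remark~\ref{rem1}), but then defer to ``a direct case analysis''. Even the preliminary fact that $\rho\bigl(s_{i_1}\ltimes(s\ltimes v)\bigr)=k-1$ rather than $k-3$ is not forced by your stated constraints; and once you try to pin down the actual element, the number of subcases proliferates (compare the eight cases in the proof of Lemma~\ref{braid0}, which treats only the very special situation $s_{i_k}s_{i_{k-1}}s_{i_k}=s_{i_{k-1}}s_{i_k}s_{i_{k-1}}$). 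Without these commutation statements, neither case goes through.

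Hultman's proof proceeds differently: a reduced $I_\ast$-expression for $w$ unfolds to an honest reduced expression for $w$ in $W$, and one then invokes the classical Exchange Property for $(W,S)$ and folds back. That route avoids the $\ltimes$-commutation problems entirely and does not require induction on $\rho$.
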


\bigskip

\textbf{From now on and until the end of this paper, we assume that $W=\Sym_n$, the symmetric group on $n$ letters, where $n\in\N$.} Moreover, we assume that ``$\ast=\text{id}$" is the identity map on $\Sym_n$. In particular, $$
I_{\ast}=\{w\in\Sym_n|w^2=1\}
$$
is the set of involutions in $\Sym_n$. For each $1\leq i<n$, we define $$
s_i:=(i,i+1).
$$

In this case, if $w=1$ (the identity element of $\Sym_n$), then by definition for any $s\in S$, $$
s\ltimes w=s\ltimes 1=s .
$$

\begin{dfn}\label{braid1} Let $w\in I_\ast$. By a braid $I_\ast$-transformation, we mean one of the following transformations: $$
\begin{aligned}
&(s_{i_1},\cdots,s_{i_a},s_j,s_{j+1},s_j,s_{l_1},\cdots,s_{l_t},w)\longmapsto\\
&\qquad\qquad (s_{i_1},\cdots,s_{i_a},s_{j+1},s_j,s_{j+1},s_{l_1},\cdots,s_{l_t},w) ,\\
&(s_{i_1},\cdots,s_{i_a},s_{j+1},s_{j},s_{j+1},s_{l_1},\cdots,s_{l_t},w)\longmapsto\\
&\qquad\qquad (s_{i_1},\cdots,s_{i_a},s_{j},s_{j+1},s_{j},s_{l_1},\cdots,s_{l_t},w) ,\\
& (s_{i_1},s_{i_2},\cdots,s_{i_a},s_b,s_{c},s_{l_1},\cdots,s_{l_t},w')\longmapsto \\
&\qquad\qquad (s_{i_1},s_{i_2},\cdots,s_{i_a},s_{c},s_b,s_{l_1},\cdots,s_{l_t},w') ,\\
& (s_{i_1},s_{i_2},\cdots,s_{i_a},s_k,s_{k+1})\longmapsto
(s_{i_1},s_{i_2},\cdots,s_{i_a},s_{k+1},s_k) ,\\
& (s_{i_1},s_{i_2},\cdots,s_{i_a},s_{k+1},s_{k})\longmapsto
(s_{i_1},s_{i_2},\cdots,s_{i_a},s_{k},s_{k+1}) ,
\end{aligned}
$$
where $w,w'\in I_\ast$, $1\leq i_1,\cdots,i_a, l_1,\cdots,l_t, b, c<n$, $1\leq j,k<n-1$, $|b-c|>1$, and the sequences appeared above are all reduced sequences.\footnote{Note that our assumption that these sequences are all reduced implies that $w\neq 1$ whenever $t=0$.}
\end{dfn}

Let $w\in I_\ast$ and $s_{i_1},\cdots,s_{i_k}\in S$. By definition, it is clear that $(s_{i_1},\cdots,s_{i_k},w)$ is a reduced sequence if and only if $(s_{i_1},\cdots,s_{i_k},s_{j_1},\cdots,s_{j_t})$ is a reduced sequence for some (and any) reduced $I_\ast$-expression $(s_{j_1},\cdots,s_{j_t})$ of $w$.

\begin{dfn} \label{braid3} Let $(s_{i_1},\cdots,s_{i_k},w), (s_{j_1},\cdots,s_{j_l},u)$ be two reduced $I_\ast$-sequences, where $w,u\in I_{\ast}$. We shall write $(s_{i_1},\cdots,s_{i_k},w)\longleftrightarrow (s_{j_1},\cdots,s_{j_l},u)$ whenever there exists a series braid $I_\ast$-transformations which transform $$(s_{i_1},\cdots,s_{i_k},s_{l_1},\cdots,s_{l_b})$$ into $(s_{j_1},\cdots,s_{j_l},s_{p_1},\cdots,\cdots,s_{p_c})$, where $(s_{l_1},\cdots,s_{l_b})$ and $(s_{p_1},\cdots,s_{p_c})$ are some reduced $I_\ast$-expressions of $w$ and $u$ respectively. Moreover, we shall also write $$
(i_1,\cdots,i_k)\longleftrightarrow (j_1,\cdots,j_k)
$$
whenever $(s_{i_1},\cdots,s_{i_k})\longleftrightarrow (s_{j_1},\cdots,s_{j_k})$.
\end{dfn}

\begin{lem} \label{braid0} Let $w\in I_\ast$. Let $(s_{i_1},\cdots,s_{i_k},w)$ be a reduced sequence.

1) If $|i_{k-1}-i_k|>1$, then $(s_{i_1},\cdots,s_{i_{k-2}}, s_{i_k}, s_{i_{k-1}}, w)$ is a reduced sequence too, and
$$
s_{i_1}\ltimes s_{i_2}\ltimes\cdots\ltimes s_{i_{k-1}}\ltimes s_{i_k}\ltimes w=s_{i_1}\ltimes s_{i_2}\ltimes\cdots\ltimes s_{i_{k-2}}\ltimes s_{i_k}\ltimes s_{i_{k-1}}\ltimes w .
$$

2) If $i_{k-2}=i_{k}=i_{k-1}\pm 1$, then $(s_{i_1},s_{i_2},\cdots,s_{i_{k-3}}, s_{i_{k-1}}, s_{i_k}, s_{i_{k-1}}, w)$ is a reduced sequence too, and
$$\begin{aligned}
& s_{i_1}\ltimes s_{i_2}\ltimes\cdots\ltimes s_{i_{k-3}}\ltimes s_{i_{k-2}}\ltimes s_{i_{k-1}}\ltimes s_{i_k}\ltimes w\\
&\qquad =s_{i_1}\ltimes s_{i_2}\ltimes\cdots\ltimes s_{i_{k-3}}\ltimes s_{i_{k-1}}\ltimes s_{i_k}\ltimes s_{i_{k-1}}\ltimes w .
\end{aligned}$$

3) If $w=s_{i_{k}\pm 1}$, then $(s_{i_1},s_{i_2},\cdots,s_{i_{k-1}}, w, s_{i_{k}})$ is a reduced sequence too, and
$$
s_{i_1}\ltimes s_{i_2}\ltimes\cdots\ltimes s_{i_{k-1}}\ltimes s_{i_k}\ltimes w=s_{i_1}\ltimes s_{i_2}\ltimes\cdots\ltimes s_{i_{k-1}}\ltimes w\ltimes s_{i_{k}} .
$$
\end{lem}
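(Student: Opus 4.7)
The plan is to handle each of the three parts by the same two-step scheme: first establish the equality of the two final involutions appearing on the right-hand sides, and then deduce reducedness of the transformed sequence automatically from Lemma \ref{rankfunc}, since both sequences have the same length and, once the final elements are shown equal, they have the same rank.

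For Part 1, I would set $s := s_{i_{k-1}}$ and $t := s_{i_k}$; since $|i_{k-1}-i_k|>1$ we have $st = ts$ in $W$. Applying the common prefix $s_{i_1}\ltimes\cdots\ltimes s_{i_{k-2}}$ to both sides reduces the claim to the pointwise identity $s\ltimes t\ltimes w = t\ltimes s\ltimes w$. I would settle this by a four-case analysis based on whether $s$ commutes with $w$ and whether $t$ commutes with $w$: in each case, the commutation $st = ts$ together with the definition of $\ltimes$ makes both sides produce the same group element (for instance, when neither $s$ nor $t$ commutes with $w$, both sides compute to $stwst$).

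For Part 2, I would set $s := s_{i_{k-1}}$ and $t := s_{i_{k-2}} = s_{i_k}$, so $sts = tst$. Stripping the common prefix, the problem becomes: given that $(t,s,t,w)$ is reduced, show that $(s,t,s,w)$ is also reduced and that $t\ltimes s\ltimes t\ltimes w = s\ltimes t\ltimes s\ltimes w$. My preferred approach exploits the $\mathcal{H}_u$-module structure on $M$ provided by Theorem \ref{LVaction}. Applying the Hecke-algebra braid identity $T_tT_sT_t = T_sT_tT_s$ to $a_w \in M$ produces an equality in $M$. Expanding the left-hand side via Theorem \ref{LVaction}, the reducedness of $(t,s,t,w)$ ensures that at each of the three applications of a $T_r$ the ``up'' coefficient ($u+1$ or $1$) is nonzero, so $a_{t\ltimes s\ltimes t\ltimes w}$ appears with a nonzero coefficient. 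The right-hand side is a linear combination of basis vectors $a_v$ with $v \in \{w,\, s\ltimes w,\, t\ltimes w,\, t\ltimes s\ltimes w,\, s\ltimes t\ltimes w,\, s\ltimes t\ltimes s\ltimes w\}$; by Lemma \ref{rankfunc} together with the reducedness of suffixes of $(t,s,t,w)$, only $s\ltimes t\ltimes s\ltimes w$ among these can attain rank $\rho(w)+3$, while the rest have rank at most $\rho(w)+2$. Matching coefficients therefore forces both $t\ltimes s\ltimes t\ltimes w = s\ltimes t\ltimes s\ltimes w$ and $\rho(s\ltimes t\ltimes s\ltimes w) = \rho(w)+3$, yielding the equality and the reducedness of $(s,t,s,w)$ simultaneously.

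For Part 3, since $w = s_{i_k\pm 1}$ is itself a simple reflection, I would set $s := s_{i_k}$ and $t := w$; then $sts = tst$. Stripping the common prefix $s_{i_1}\ltimes\cdots\ltimes s_{i_{k-1}}$ reduces the claim to the pointwise identity $s\ltimes t = t\ltimes s$, with each side computed starting from the identity involution. Direct calculation gives $s\ltimes t = sts$ and $t\ltimes s = tst$, which coincide by the braid relation. The main technical obstacle is Part 2, where the Hecke-algebra argument packages what would otherwise be an unwieldy direct case analysis over the eight configurations (of whether each $\ltimes$-step is a left multiplication or a conjugation) into a single coefficient comparison; reducedness plays the essential role of supplying both the non-vanishing of the top-rank LHS coefficient and the rank constraints that isolate the matching element on the RHS.
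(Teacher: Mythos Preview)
Your argument is correct. Parts 1) and 3) coincide with the paper's treatment (the paper cites a reference and a ``direct case by case check'' for Part 1 and performs the same one-line calculation $s_{i_k\pm1}\ltimes s_{i_k}=s_{i_k\pm1}s_{i_k}s_{i_k\pm1}=s_{i_k}s_{i_k\pm1}s_{i_k}=s_{i_k}\ltimes s_{i_k\pm1}$ for Part 3).

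For Part 2 your route genuinely differs from the paper's. The paper carries out an explicit eight-case analysis on the $(s_{i_k},s_{i_{k-1}},s_{i_k})$-side, splitting according to whether each of the three successive $\ltimes$-steps is a left multiplication or a conjugation; it then uses the reducedness hypothesis (via length counts from Lemma~\ref{rankfunc}) to rule out five of the eight configurations, and verifies the claimed equality directly in the remaining three. Your approach, applying the Hecke-algebra braid relation $T_tT_sT_t=T_sT_tT_s$ to $a_w$ via Theorem~\ref{LVaction} and isolating the unique rank-$(\rho(w)+3)$ basis element on each side, is more conceptual and bypasses that case work entirely. The trade-off is that the paper's eight-case analysis does more than prove the lemma: it records precisely which three configurations survive, and this classification is stated separately as Corollary~\ref{braid0cor} and used in an essential way in Section~5 (in the proof of Lemma~\ref{keylem1}, where the operators $\theta_{\sigma,t}$ must be shown invariant under the braid $I_\ast$-transformations). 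Your proof establishes Lemma~\ref{braid0} itself, but if one follows the paper's later strategy, the three-case trichotomy would still need to be extracted by a separate argument.
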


\begin{proof} 1) This follows from the fact that $s_{i_{k-1}}s_{i_k}=s_{i_k}s_{i_{k-1}}$ and some direct case by case check, see also \cite[Lemma 3.24]{Vor}.\smallskip

2) It suffices to show that
$$
s_{i_k}\ltimes s_{i_{k-1}}\ltimes s_{i_k}\ltimes w=s_{i_{k-1}}\ltimes s_{i_k}\ltimes s_{i_{k-1}}\ltimes w .
$$
There are eight possibilities:

\smallskip
{\it Case 1.} $s_{i_k}w\neq ws_{i_k}$, $s_{i_{k-1}}s_{i_k}ws_{i_k}\neq s_{i_k}ws_{i_k}s_{i_{k-1}}$ and $
s_{i_k}s_{i_{k-1}}s_{i_k}ws_{i_k}s_{i_{k-1}}\neq s_{i_{k-1}}s_{i_k}ws_{i_k}s_{i_{k-1}}s_{i_k}$. In this case, we have that
$$
s_{i_k}\ltimes s_{i_{k-1}}\ltimes s_{i_k}\ltimes w=s_{i_k}s_{i_{k-1}}s_{i_k}ws_{i_k}s_{i_{k-1}}s_{i_k}
=s_{i_{k-1}}s_{i_k}s_{i_{k-1}}ws_{i_{k-1}}s_{i_k}s_{i_{k-1}}.
$$
Since $(s_{i_k},s_{i_{k-1}},s_{i_k},w)$ is a reduced sequence, it is clear (by Lemma \ref{rankfunc}) that $$
\ell(s_{i_{k-1}}s_{i_k}s_{i_{k-1}}ws_{i_{k-1}}s_{i_k}s_{i_{k-1}})=\ell(w)+6. $$
So $s_{i_{k-1}}w\neq ws_{i_{k-1}}$, $s_{i_k} s_{i_{k-1}} w s_{i_{k-1}}\neq s_{i_{k-1}} w s_{i_{k-1}} s_{i_k}$ and $
s_{i_{k-1}}s_{i_k}s_{i_{k-1}}ws_{i_{k-1}}s_{i_k}\neq s_{i_k}s_{i_{k-1}}ws_{i_{k-1}}s_{i_k}s_{i_{k-1}}$. By definition, we get that
$$
s_{i_{k-1}}\ltimes s_{i_k}\ltimes s_{i_{k-1}}\ltimes w=s_{i_{k-1}}s_{i_k}s_{i_{k-1}}ws_{i_{k-1}}s_{i_k}s_{i_{k-1}}
=s_{i_k}\ltimes s_{i_{k-1}}\ltimes s_{i_k}\ltimes w.
$$

\smallskip
{\it Case 2.} $s_{i_k}w\neq ws_{i_k}$, $s_{i_{k-1}}s_{i_k}ws_{i_k}\neq s_{i_k}ws_{i_k}s_{i_{k-1}}$ and $
s_{i_k}s_{i_{k-1}}s_{i_k}ws_{i_k}s_{i_{k-1}}= s_{i_{k-1}}s_{i_k}ws_{i_k}s_{i_{k-1}}s_{i_k}$. In this case, we have that
$$
s_{i_k}\ltimes s_{i_{k-1}}\ltimes s_{i_k}\ltimes w=s_{i_k}s_{i_{k-1}}s_{i_k}ws_{i_k}s_{i_{k-1}}
=s_{i_{k-1}}s_{i_k}s_{i_{k-1}}ws_{i_k}s_{i_{k-1}}.
$$
Since
$$\begin{aligned}
s_{i_{k-1}}s_{i_k}s_{i_{k-1}}ws_{i_k}s_{i_{k-1}}&=s_{i_k}s_{i_{k-1}}s_{i_k}ws_{i_k}s_{i_{k-1}}
= s_{i_{k-1}}s_{i_k}ws_{i_k}s_{i_{k-1}}s_{i_k}\\
&=s_{i_{k-1}}s_{i_k}ws_{i_{k-1}}s_{i_k}s_{i_{k-1}},
\end{aligned}$$
it follows that $s_{i_{k-1}}w=ws_{i_{k-1}}$. Moreover, since $(s_{i_k},s_{i_{k-1}},s_{i_k},w)$ is a reduced sequence, it follows from Lemma \ref{rankfunc} that $$\ell(s_{i_{k-1}}s_{i_k}s_{i_{k-1}}ws_{i_k}s_{i_{k-1}})=\ell(w)+5 .
$$
In particular, $s_{i_k}s_{i_{k-1}}w\neq s_{i_{k-1}}ws_{i_k}$ and $
s_{i_{k-1}}s_{i_k}s_{i_{k-1}}ws_{i_k}\neq s_{i_k}s_{i_{k-1}}ws_{i_k}s_{i_{k-1}}$. As a consequence, we get (by definition) that
$$
s_{i_{k-1}}\ltimes s_{i_k}\ltimes s_{i_{k-1}}\ltimes w =s_{i_{k-1}}s_{i_k}s_{i_{k-1}}ws_{i_k}s_{i_{k-1}}
=s_{i_k}\ltimes s_{i_{k-1}}\ltimes s_{i_k}\ltimes w.
$$

\smallskip
{\it Case 3.} $s_{i_k}w=ws_{i_k}$, $s_{i_{k-1}}s_{i_k}w\neq s_{i_k}ws_{i_{k-1}}$ and $$
s_{i_k}s_{i_{k-1}}s_{i_k}ws_{i_{k-1}}\neq s_{i_{k-1}}s_{i_k}ws_{i_{k-1}}s_{i_k}.$$ In this case, we have that
$$
s_{i_k}\ltimes s_{i_{k-1}}\ltimes s_{i_k}\ltimes w=s_{i_k}s_{i_{k-1}}s_{i_k}ws_{i_{k-1}}s_{i_k}
=s_{i_{k-1}}s_{i_k}s_{i_{k-1}}ws_{i_{k-1}}s_{i_k}.
$$
Since $(s_{i_k},s_{i_{k-1}},s_{i_k},w)$ is a reduced sequence, it follows from Lemma \ref{rankfunc} that $$
\ell(s_{i_{k-1}}s_{i_k}s_{i_{k-1}}ws_{i_{k-1}}s_{i_k})=\ell(w)+5 .
$$
In particular, $s_{i_{k-1}}w\neq ws_{i_{k-1}}$ and $s_{i_k}s_{i_{k-1}}ws_{i_{k-1}}\neq s_{i_{k-1}}ws_{i_{k-1}}s_{i_k}$. Note that
$$\begin{aligned}
s_{i_{k-1}}s_{i_k}s_{i_{k-1}}ws_{i_{k-1}}s_{i_k}&=s_{i_k}s_{i_{k-1}}s_{i_k}ws_{i_{k-1}}s_{i_k}
= s_{i_k}s_{i_{k-1}}ws_{i_k}s_{i_{k-1}}s_{i_k}\\
&=s_{i_k}s_{i_{k-1}}ws_{i_{k-1}}s_{i_k}s_{i_{k-1}}.
\end{aligned}$$
By definition, we get that
$$
s_{i_{k-1}}\ltimes s_{i_k}\ltimes s_{i_{k-1}}\ltimes w =s_{i_{k-1}}s_{i_k}s_{i_{k-1}}ws_{i_{k-1}}s_{i_k}
=s_{i_k}\ltimes s_{i_{k-1}}\ltimes s_{i_k}\ltimes w.
$$

\smallskip
{\it Case 4.} $s_{i_k}w\neq ws_{i_k}$, $s_{i_{k-1}}s_{i_k}ws_{i_k}=s_{i_k}ws_{i_k}s_{i_{k-1}}$ and $$
s_{i_k}s_{i_{k-1}}s_{i_k} w s_{i_k} \neq s_{i_{k-1}} s_{i_k} w s_{i_k} s_{i_k}.$$ In this case, we have that
$$
s_{i_k}\ltimes s_{i_{k-1}}\ltimes s_{i_k}\ltimes w=s_{i_k}s_{i_{k-1}}s_{i_k}ws_{i_k}s_{i_k}=s_{i_k}s_{i_{k-1}}s_{i_k}w .
$$
Since $(s_{i_k},s_{i_{k-1}},s_{i_k},w)$ is a reduced sequence, it follows from Lemma \ref{rankfunc} that $$\ell(s_{i_k}s_{i_{k-1}}s_{i_k}w)=
\ell(s_{i_k}\ltimes s_{i_{k-1}}\ltimes s_{i_k}\ltimes w)=\ell(w)+5,$$ which is impossible. Therefore, this case can not happen.

\smallskip
{\it Case 5.} $s_{i_k}w\neq ws_{i_k}$, $s_{i_{k-1}}s_{i_k}ws_{i_k}=s_{i_k}ws_{i_k}s_{i_{k-1}}$ and $$
s_{i_k}s_{i_{k-1}}s_{i_k}ws_{i_k}= s_{i_{k-1}}s_{i_k}ws_{i_k}s_{i_k}.$$ In this case, we have that
$$
s_{i_k}\ltimes s_{i_{k-1}}\ltimes s_{i_k}\ltimes w=s_{i_k}s_{i_{k-1}}s_{i_k}ws_{i_k}=s_{i_k}s_{i_{k-1}}ws_{i_k}s_{i_k}=s_{i_k}s_{i_{k-1}}w.
$$
Since $(s_{i_k},s_{i_{k-1}},s_{i_k},w)$ is a reduced sequence, it follows from Lemma \ref{rankfunc} that $$\ell(s_{i_k}s_{i_{k-1}}w)=\ell(s_{i_k}\ltimes s_{i_{k-1}}\ltimes s_{i_k}\ltimes w)=\ell(w)+4,
$$ which is impossible. Therefore, this case can not happen too.

\smallskip
{\it Case 6.} $s_{i_k}w=ws_{i_k}$, $s_{i_{k-1}}s_{i_k}w\neq s_{i_k}ws_{i_{k-1}}$ and $$
s_{i_k}s_{i_{k-1}}s_{i_k}ws_{i_{k-1}}= s_{i_{k-1}}s_{i_k}ws_{i_{k-1}}s_{i_k}.$$ In this case, we have that $$
s_{i_k}\ltimes s_{i_{k-1}}\ltimes s_{i_k}\ltimes w
=s_{i_k}s_{i_{k-1}}s_{i_k}ws_{i_{k-1}}.
$$
Since
$$\begin{aligned}
s_{i_{k-1}}s_{i_k}s_{i_{k-1}}ws_{i_{k-1}}&=s_{i_k}s_{i_{k-1}}s_{i_k}ws_{i_{k-1}}=s_{i_{k-1}}s_{i_k}ws_{i_{k-1}}s_{i_k}\\
&=s_{i_{k-1}}ws_{i_k}s_{i_{k-1}}s_{i_k}=s_{i_{k-1}}ws_{i_{k-1}}s_{i_k}s_{i_{k-1}},\end{aligned}
$$
it follows that $s_{i_k}s_{i_{k-1}}w=ws_{i_{k-1}}s_{i_k}$.
As a consequence, $$
s_{i_k}s_{i_{k-1}}s_{i_k}w=s_{i_k}s_{i_{k-1}}ws_{i_k}=ws_{i_{k-1}} ,
$$
and hence $$
s_{i_k}\ltimes s_{i_{k-1}}\ltimes s_{i_k}\ltimes w
=s_{i_k}s_{i_{k-1}}s_{i_k}ws_{i_{k-1}}=w.
$$
However, since $(s_{i_k},s_{i_{k-1}},s_{i_k},w)$ is a reduced sequence,
$$\ell(s_{i_k}\ltimes s_{i_{k-1}}\ltimes s_{i_k}\ltimes w)=\ell(w)+4.$$ We get a contradiction. Therefore, this case can not happen too.

\smallskip
{\it Case 7.} $s_{i_k}w=ws_{i_k}$, $s_{i_{k-1}}s_{i_k}w=s_{i_k}ws_{i_{k-1}}$ and $$
s_{i_k}s_{i_{k-1}}s_{i_k}w\neq s_{i_{k-1}}s_{i_k}ws_{i_k}.$$ In this case, we have that
$$
s_{i_k}\ltimes s_{i_{k-1}}\ltimes s_{i_k}\ltimes w=s_{i_k}s_{i_{k-1}}s_{i_k}ws_{i_k}=s_{i_k}s_{i_{k-1}}ws_{i_k}s_{i_k}=s_{i_k}s_{i_{k-1}}w.
$$
Since $(s_{i_k},s_{i_{k-1}},s_{i_k},w)$ is a reduced sequence, it follows from Lemma \ref{rankfunc} that $$
\ell(s_{i_k}s_{i_{k-1}}w)=\ell(s_{i_k}\ltimes s_{i_{k-1}}\ltimes s_{i_k}\ltimes w)=\ell(w)+4,
$$ which is impossible. Therefore, this case can not happen too.

\smallskip
{\it Case 8.} $s_{i_k}w=ws_{i_k}$, $s_{i_{k-1}}s_{i_k}w=s_{i_k}ws_{i_{k-1}}$ and $$
s_{i_k}s_{i_{k-1}}s_{i_k}w= s_{i_{k-1}}s_{i_k}ws_{i_k}.$$ In this case, we have that
$$
s_{i_k}\ltimes s_{i_{k-1}}\ltimes s_{i_k}\ltimes w =s_{i_k}s_{i_{k-1}}s_{i_k}w=s_{i_{k-1}}s_{i_k}ws_{i_k}=s_{i_{k-1}}ws_{i_k}s_{i_k}=s_{i_{k-1}}w.
$$
Since $(s_{i_k},s_{i_{k-1}},s_{i_k},w)$ is a reduced sequence, it follows from Lemma \ref{rankfunc} that $$
\ell(s_{i_{k-1}}w)=\ell(s_{i_k}\ltimes s_{i_{k-1}}\ltimes s_{i_k}\ltimes w)=\ell(w)+3,
$$ which is impossible. Therefore, this case can not happen too.\smallskip

This completes the proof of the statement 2) of the lemma.
\medskip

3) It suffices to show that $s_{i_{k}}\ltimes s_{i_k\pm 1}=s_{i_k\pm 1}\ltimes s_{i_{k}}$. By definition,
$$
s_{i_k\pm 1}\ltimes s_{i_{k}}=s_{i_{k}\pm 1}s_{i_k}s_{i_{k}\pm 1}=s_{i_k}s_{i_{k}\pm 1}s_{i_k}=s_{i_{k}}\ltimes s_{i_k\pm 1},
$$
as required.
\end{proof}

One of the important consequence of the above lemma is the following result, which will play important role in the proof of the main result of this paper.

\begin{cor} \label{braid0cor} Let $w\in I_\ast$. Let $(s_{i_1},\cdots,s_{i_k},w)$ be a reduced sequence. Suppose that $i_{k-2}=i_{k}=i_{k-1}\pm 1$, then either \begin{enumerate}
\item we have that $s_{i_k}w\neq ws_{i_k}$, $s_{i_{k-1}}s_{i_k}ws_{i_k}\neq s_{i_k}ws_{i_k}s_{i_{k-1}}$, $$
s_{i_k}s_{i_{k-1}}s_{i_k}ws_{i_k}s_{i_{k-1}}\neq s_{i_{k-1}}s_{i_k}ws_{i_k}s_{i_{k-1}}s_{i_k},$$
and $s_{i_{k-1}}w\neq ws_{i_{k-1}}$, $s_{i_k} s_{i_{k-1}} w s_{i_{k-1}}\neq s_{i_{k-1}} w s_{i_{k-1}} s_{i_k}$, $$
s_{i_{k-1}}s_{i_k}s_{i_{k-1}}ws_{i_{k-1}}s_{i_k}\neq s_{i_k}s_{i_{k-1}}ws_{i_{k-1}}s_{i_k}s_{i_{k-1}};
$$ or
\item we have that $s_{i_k}w\neq ws_{i_k}$, $s_{i_{k-1}}s_{i_k}ws_{i_k}\neq s_{i_k}ws_{i_k}s_{i_{k-1}}$, $$
s_{i_k}s_{i_{k-1}}s_{i_k}ws_{i_k}s_{i_{k-1}}= s_{i_{k-1}}s_{i_k}ws_{i_k}s_{i_{k-1}}s_{i_k},$$
and $s_{i_{k-1}}w=ws_{i_{k-1}}$, $s_{i_k}s_{i_{k-1}}w\neq s_{i_{k-1}}ws_{i_k}$, $$
s_{i_{k-1}}s_{i_k}s_{i_{k-1}}ws_{i_k}\neq s_{i_k}s_{i_{k-1}}ws_{i_k}s_{i_{k-1}};$$ or
\item we have that $s_{i_k}w=ws_{i_k}$, $s_{i_{k-1}}s_{i_k}w\neq s_{i_k}ws_{i_{k-1}}$, $$
s_{i_k}s_{i_{k-1}}s_{i_k}ws_{i_{k-1}}\neq s_{i_{k-1}}s_{i_k}ws_{i_{k-1}}s_{i_k},$$
and $s_{i_{k-1}}w\neq ws_{i_{k-1}}$, $s_{i_k}s_{i_{k-1}}ws_{i_{k-1}}\neq s_{i_{k-1}}ws_{i_{k-1}}s_{i_k}$, $$
s_{i_{k-1}}s_{i_k}s_{i_{k-1}}ws_{i_{k-1}}s_{i_k}=s_{i_k}s_{i_{k-1}}ws_{i_{k-1}}s_{i_k}s_{i_{k-1}} .
$$
\end{enumerate}
\end{cor}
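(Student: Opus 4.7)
The plan is to observe that this corollary is essentially a repackaging of the case analysis already carried out inside the proof of Lemma \ref{braid0}(2). Since $i_{k-2}=i_k=i_{k-1}\pm 1$, the sequence $(s_{i_k},s_{i_{k-1}},s_{i_k},w)$ is reduced by the assumption that $(s_{i_1},\ldots,s_{i_k},w)$ is reduced together with Lemma \ref{rankfunc}, and I want to determine exactly which combinations of the defining conditions of $\ltimes$ can occur when forming $s_{i_k}\ltimes s_{i_{k-1}}\ltimes s_{i_k}\ltimes w$.

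First I would enumerate the a priori eight possibilities, organised by three binary choices: whether $s_{i_k}w=ws_{i_k}$ or not; whether the next application of $s_{i_{k-1}}$ commutes with its argument or not; and whether the outermost application of $s_{i_k}$ commutes with its argument or not. These are precisely the Cases 1--8 considered in the proof of Lemma \ref{braid0}(2). The key input is that $(s_{i_k},s_{i_{k-1}},s_{i_k},w)$ is a reduced sequence, so by Lemma \ref{rankfunc} we must have $\ell(s_{i_k}\ltimes s_{i_{k-1}}\ltimes s_{i_k}\ltimes w)=\ell(w)+6$. In Cases 4, 5, 6, 7 and 8 the defining equalities cause cancellations that force $s_{i_k}\ltimes s_{i_{k-1}}\ltimes s_{i_k}\ltimes w$ to be a product of strictly fewer than six simple reflections times $w$, hence $\ell(\,\cdot\,)<\ell(w)+6$; this contradiction rules those cases out. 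Thus only Cases 1, 2, 3 survive, matching exactly parts (1), (2), (3) of the corollary.

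Second, I would check that within each of the surviving cases the ``symmetric'' conditions also hold, that is the conditions stated for the chain $s_{i_{k-1}}\ltimes s_{i_k}\ltimes s_{i_{k-1}}\ltimes w$. These were already derived in the body of the proof of Lemma \ref{braid0}(2): in Case 1 the identity $s_{i_k}\ltimes s_{i_{k-1}}\ltimes s_{i_k}\ltimes w=s_{i_{k-1}}s_{i_k}s_{i_{k-1}}ws_{i_{k-1}}s_{i_k}s_{i_{k-1}}$ together with $\ell(\,\cdot\,)=\ell(w)+6$ forces all three non-commutation conditions on the other side; in Case 2 the equality $s_{i_k}s_{i_{k-1}}s_{i_k}ws_{i_k}s_{i_{k-1}}=s_{i_{k-1}}s_{i_k}ws_{i_k}s_{i_{k-1}}s_{i_k}$ is shown to imply $s_{i_{k-1}}w=ws_{i_{k-1}}$, and the length $\ell(w)+5$ then pins down the remaining two conditions; Case 3 is treated symmetrically.

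The main obstacle is purely bookkeeping: one must keep track in each case of which of the four defining rules of the action $\ltimes$ was used at each of the three steps, and verify that the length equality $\ell(s_{i_k}\ltimes s_{i_{k-1}}\ltimes s_{i_k}\ltimes w)=\ell(w)+6$ extracted from reducedness really does both eliminate Cases 4--8 and pin down the symmetric conditions in Cases 1--3. Since all these computations are already explicit in the proof of Lemma \ref{braid0}(2), the proof of the corollary reduces to citing those three cases and reading off the conditions that were verified there. In particular, no new case analysis or new identities in $\Sym_n$ are required.
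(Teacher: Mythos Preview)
Your proposal is essentially correct and matches the paper's approach exactly: the paper's proof is the single line ``This follows from the proof of Lemma \ref{braid0},'' and you have correctly identified that Cases 1, 2, 3 of that proof yield parts (a), (b), (c) of the corollary while Cases 4--8 are eliminated.

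One inaccuracy worth fixing: reducedness of $(s_{i_k},s_{i_{k-1}},s_{i_k},w)$ gives $\rho(s_{i_k}\ltimes s_{i_{k-1}}\ltimes s_{i_k}\ltimes w)=\rho(w)+3$ via Lemma \ref{rankfunc}, not $\ell(\cdot)=\ell(w)+6$; the length increment is $6$ only in Case 1 and is $5$ in Cases 2 and 3 (as you yourself note later), and the contradictions in Cases 4--8 each come from a specific length count rather than a uniform bound.
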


\begin{proof} This follows from the proof of Lemma \ref{braid0}.
\end{proof}

\begin{cor} \label{braid0cor2} Let $w\in I_\ast$. Let $(s_a,s_b,w)$ be a reduced sequence. Suppose that $|a-b|>1$, then either \begin{enumerate}
\item $s_aw\neq ws_a$, $s_bs_a w s_a\neq s_aws_a s_b$ and $
s_bw\neq ws_b, s_as_b w s_b\neq s_b w s_bs_a;$ or
\item $s_aw= ws_a$, $s_bs_a w \neq s_aw s_b$ and $
s_bw\neq ws_b, s_as_b w s_b=s_b w s_bs_a;$ or
\item $s_aw\neq ws_a$, $s_bs_a w s_a=s_aws_a s_b$ and $
s_bw=ws_b, s_as_b w \neq s_b w s_a;$ or
\item $s_aw=ws_a$, $s_bs_a w =s_aw s_b$ and $
s_bw=ws_b, s_as_b w=s_b w s_a.$
\end{enumerate}
\end{cor}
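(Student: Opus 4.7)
The plan is to perform a direct case analysis on the four possible truth-value patterns of the pair $(s_aw=ws_a,\ s_bw=ws_b)$. The hypothesis $|a-b|>1$ gives the basic commutation relation $s_as_b=s_bs_a$ inside $\Sym_n$, and every compound condition appearing in (1)--(4) will be reduced, by commuting $s_a$ and $s_b$ past each other and then cancelling, to one of these two simple commutation conditions.

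Concretely, the key technical input is the following set of four equivalences, each of which is a short calculation:
\begin{align*}
s_bs_aws_a=s_aws_as_b &\iff s_bw=ws_b, \\
s_as_bws_b=s_bws_bs_a &\iff s_aw=ws_a, \\
s_bs_aw=s_aws_b &\iff s_bw=ws_b, \\
s_as_bw=s_bws_a &\iff s_aw=ws_a.
\end{align*}
For instance, the first is obtained by rewriting $s_bs_a=s_as_b$ on the left to obtain $s_as_bws_a=s_aws_as_b$, cancelling $s_a$ on the left to obtain $s_bws_a=ws_as_b$, rewriting $s_as_b=s_bs_a$ on the right to obtain $s_bws_a=ws_bs_a$, and cancelling $s_a$ on the right to obtain $s_bw=ws_b$. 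The other three are analogous.

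Given these equivalences, the four cases of the corollary correspond exactly to the four possible values of $(s_aw=ws_a,\ s_bw=ws_b)$: both false gives case~(1) (equivalences one and two supply the two extended inequalities), the pattern (true, false) gives case~(2) (equivalence three supplies $s_bs_aw\neq s_aws_b$ and equivalence two supplies $s_as_bws_b=s_bws_bs_a$), (false, true) gives case~(3) by symmetry, and both true gives case~(4) (equivalences three and four supply the two equalities). I do not foresee a genuine obstacle; the argument is a bookkeeping exercise, which is why the author's own proof merely cites the case analysis in the proof of Lemma~\ref{braid0}. The reducedness hypothesis on $(s_a,s_b,w)$ plays no logical role in the reduction itself and is present only for consistency with the surrounding context.
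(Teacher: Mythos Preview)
Your proposal is correct and follows essentially the same route as the paper: both arguments use $s_as_b=s_bs_a$ to reduce each compound (in)equality to one of the two basic commutation conditions $s_aw=ws_a$, $s_bw=ws_b$, and both observe (as you do explicitly) that the reducedness hypothesis is not actually needed. The only cosmetic difference is that the paper organises the case split by the first two conditions listed in each of (a)--(d) and argues by contradiction, whereas you first record the four biconditionals and then split directly on the pair $(s_aw=ws_a,\,s_bw=ws_b)$; the underlying algebra is identical.
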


\begin{proof} We only prove a) as the others can be proved in a similar manner and are left to the readers.

Suppose that $s_aw\neq ws_a$, $s_bs_a w s_a\neq s_aws_a s_b$. We first show that $s_bw\neq ws_b$. In fact, if
$s_bw=ws_b$ then (because $|a-b|>1$ implies that $s_as_b=s_bs_a$) $$
s_bs_a w s_a=s_as_bws_a=s_aws_bs_a=s_aws_as_b,
$$
which is a contradiction. This proves that $s_bw\neq ws_b$. Similarly, if $s_as_b w s_b=s_b w s_bs_a$, then we shall have that $s_bs_aws_b=s_as_b w s_b=s_b w s_bs_a=s_b w s_as_b$ which implies that $s_aw=ws_a$. We get a contradiction again. This proves that $s_as_b w s_b\neq s_b w s_bs_a$.
\end{proof}

In the rest of this section, we shall present some technical lemmas which will be used in the next section.

\begin{lem} \label{keyobser1} Let $1\leq i<n$ and $w\in\Sym_n$. Suppose that  $s_{i}s_{i+1}s_{i}w<s_{i+1}s_{i}w$. Then $s_{i+1}w<w$.
\end{lem}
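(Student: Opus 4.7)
The plan is to reduce the statement to the elementary descent criterion for the symmetric group: for $s = s_k$ and $x \in \Sym_n$, we have $\ell(s_k x) < \ell(x)$ if and only if $x^{-1}(k) > x^{-1}(k+1)$. Both the hypothesis and the conclusion will be translated into such descent conditions, after which the proof becomes a direct computation of $(s_{i+1}s_i w)^{-1}$ on the entries $i$ and $i+1$.

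First I would set $u := s_{i+1} s_i w$. The hypothesis $s_i s_{i+1} s_i w < s_{i+1} s_i w$ rewrites as $\ell(s_i u) < \ell(u)$, which by the descent criterion applied at $k = i$ is equivalent to
\[
u^{-1}(i) > u^{-1}(i+1).
\]
Since $u^{-1} = w^{-1} s_i s_{i+1}$, I would compute directly: $s_i s_{i+1}(i) = s_i(i) = i+1$ (as $s_{i+1}$ fixes $i$), and $s_i s_{i+1}(i+1) = s_i(i+2) = i+2$ (as $s_i$ fixes $i+2$). Therefore
\[
u^{-1}(i) = w^{-1}(i+1) \quad \text{and} \quad u^{-1}(i+1) = w^{-1}(i+2).
\]
The inequality above then becomes $w^{-1}(i+1) > w^{-1}(i+2)$, which by the descent criterion applied at $k = i+1$ is exactly $\ell(s_{i+1} w) < \ell(w)$, i.e.\ $s_{i+1} w < w$.

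There is no real obstacle here: the lemma is essentially an observation about how the permutation $s_i s_{i+1}$ acts on the two neighbouring entries $i$ and $i+1$. The only mild subtlety is keeping straight that $\ell(sx) < \ell(x)$ corresponds to an inversion of $x^{-1}$ (not of $x$) at positions $(k,k+1)$; once that is fixed, the two descent conditions match up after a one-line calculation. The result is purely combinatorial and does not require any information about whether $w$ is an involution or about the operation $\ltimes$.
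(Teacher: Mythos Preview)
Your proof is correct and follows essentially the same approach as the paper: both translate the hypothesis via the descent criterion into the inequality $(w^{-1}s_is_{i+1})(i)>(w^{-1}s_is_{i+1})(i+1)$, compute these values as $w^{-1}(i+1)$ and $w^{-1}(i+2)$, and conclude $s_{i+1}w<w$. Your write-up is simply more explicit about the intermediate steps than the paper's one-line version.
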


\begin{proof} By assumption,
$$w^{-1}(i+1)=(w^{-1}s_is_{i+1})(i)>(w^{-1}s_is_{i+1})(i+1)=w^{-1}(i+2) .$$
It follows that $s_{i+1}w<w$.
\end{proof}

\begin{lem} \label{keyobser2} Let $w_2\in I_\ast$. Suppose that $\ell(s_{c+1}s_c w_2 s_{c}s_{c+1})=\ell(w_2)+4$, $s_{c}w_2\neq w_2s_{c}$, $s_{c+1}s_{c}w_2 s_c\neq s_c w_2 s_{c}s_{c+1}$ and $s_{c+1}w_2 s_{c}s_{c+1}<w_2 s_{c}s_{c+1}$. Then $s_{c+1}w_2<w_2$ and either $s_{c+1}w_2=w_2s_{c+1}$ or
$s_{c+1}w_2 s_{c+1}<s_{c+1}w_2$.
\end{lem}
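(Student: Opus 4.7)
The plan is to read everything off the one-line notation of $w_2$, setting $x:=w_2(c+1)$ and $y:=w_2(c+2)$. Because $w_2=w_2^{-1}$, every inequality of the form $s_i u<u$ or $u s_i<u$ appearing in the statement translates, via the standard left/right descent characterisations, into a strict inequality among specific values of $w_2$; no further input from Lemma \ref{rankfunc} will be needed beyond this dictionary.

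First I would unpack the hypothesis $s_{c+1}w_2 s_c s_{c+1}<w_2 s_c s_{c+1}$. Writing $v:=w_2 s_c s_{c+1}$ and using $w_2^{-1}=w_2$, one has $v^{-1}=s_{c+1}s_c w_2$, so this inequality is equivalent to $v^{-1}(c+1)>v^{-1}(c+2)$. Since $s_{c+1}s_c$ acts on $\{1,\dots,n\}$ as the $3$-cycle $\pi:=(c,\,c{+}2,\,c{+}1)$ and trivially elsewhere, the hypothesis becomes simply $\pi(x)>\pi(y)$. In parallel, subadditivity of $\ell$ applied to $\ell(s_{c+1}s_c w_2 s_c s_{c+1})=\ell(w_2)+4$ yields $\ell(w_2 s_c)=\ell(w_2)+1$ and $\ell(w_2 s_c s_{c+1})=\ell(w_2)+2$, which translate to $w_2(c)<x$ and $w_2(c)<y$. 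Combined with involutivity these rule out $x=c$ and $y=c$ (for instance, $x=c$ would force $w_2(c)=c+1$, contradicting $w_2(c)<x$).

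The conclusion $s_{c+1}w_2<w_2$ is equivalent to $x>y$ via involutivity and the left-descent characterisation. I would deduce $x>y$ from $\pi(x)>\pi(y)$ and $x,y\neq c$ by a short case analysis on whether $x$ and/or $y$ lie in $\{c{+}1,c{+}2\}$ or outside $\{c,c{+}1,c{+}2\}$: in each feasible case the $3$-cycle $\pi$ either fixes the value or rotates it within $\{c,c{+}1,c{+}2\}$, and a direct inspection shows $\pi(x)>\pi(y)$ forces $x>y$. The only subcase realising $\{x,y\}=\{c{+}1,c{+}2\}$ is $(x,y)=(c{+}2,c{+}1)$, which is exactly when $w_2$ stabilises $\{c{+}1,c{+}2\}$ as a set, equivalently $s_{c+1}w_2=w_2 s_{c+1}$. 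In every other feasible case I would then verify directly that $s_{c+1}(x)>s_{c+1}(y)$, which via the right-descent characterisation translates to $s_{c+1}w_2 s_{c+1}<s_{c+1}w_2$, giving the dichotomy in the conclusion. The main obstacle is just bookkeeping the handful of cases cleanly; no deeper input beyond the descent dictionary for $\Sym_n$ and the cycle structure of $s_{c+1}s_c$ is required.
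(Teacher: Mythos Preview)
Your proof is correct. Both your argument and the paper's reduce everything to comparisons of values in the one-line notation of $w_2$, so they are similar in spirit; the difference is purely organisational. The paper argues by contradiction: assuming $s_{c+1}w_2>w_2$, it combines this with $\ell(s_{c+1}w_2 s_c s_{c+1})=\ell(w_2)+1$ to split into two cases according to the ascent/descent pattern of the chain $s_{c+1}w_2,\;s_{c+1}w_2 s_c,\;s_{c+1}w_2 s_c s_{c+1}$, deriving a numerical contradiction in each; the final dichotomy is then delegated to Corollary~\ref{length0}. You instead encode the key hypothesis as $\pi(x)>\pi(y)$ for the $3$-cycle $\pi=s_{c+1}s_c$ with $x=w_2(c+1)$, $y=w_2(c+2)$, use subadditivity to exclude $x=c$ and $y=c$, and then run a single direct case analysis on the location of $x,y$ relative to $\{c{+}1,c{+}2\}$, obtaining both $x>y$ and the dichotomy $s_{c+1}(x)>s_{c+1}(y)$ (or the commuting case $(x,y)=(c{+}2,c{+}1)$) in one sweep. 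Your route is a little more self-contained (no appeal to Corollary~\ref{length0}) and avoids proof by contradiction; the paper's route is terser on the first conclusion but outsources the second. Note also that neither argument actually uses the hypotheses $s_c w_2\neq w_2 s_c$ and $s_{c+1}s_c w_2 s_c\neq s_c w_2 s_c s_{c+1}$; they serve only to place the lemma in its intended context.
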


\begin{proof} By assumption, \begin{equation}\label{cond1}
\ell(s_{c+1}w_2 s_{c}s_{c+1})=\ell(w_2 s_{c}s_{c+1})-1=\ell(w_2)+1 .\end{equation}

Suppose that $s_{c+1}w_2>w_2$. Then
$\ell(s_{c+1}w_2)=\ell(w_2)+1$ and (\ref{cond1}) imply that there are only the following two possibilities:

\smallskip
{\it Case 1.} $s_{c+1}w_2<s_{c+1}w_2 s_c>s_{c+1}w_2 s_cs_{c+1}$. In this case, we have that $$
s_{c+1}w_2(c+1)>s_{c+1}w_2(c)=s_{c+1}w_2s_c(c+1)>s_{c+1}w_2s_c(c+2)=s_{c+1}w_2(c+2) . $$
Now $w_2s_c>w_2<s_{c+1}w_2$ and $w_2=w_2^{-1}$ imply that $w_2(c)<w_2(c+1)<w_2(c+2)$. It follows that $$
c+1=w_2(c)<w_2(c+1)<w_2(c+2)=c+2 ,
$$
which is impossible.

\smallskip
{\it Case 2.} $s_{c+1}w_2>s_{c+1}w_2 s_c<s_{c+1}w_2 s_cs_{c+1}$. In this case, we have that $$
s_{c+1}w_2(c+1)<s_{c+1}w_2(c)=s_{c+1}w_2s_c(c+1)<s_{c+1}w_2s_c(c+2)=s_{c+1}w_2(c+2) . $$
Now $w_2s_c>w_2<s_{c+1}w_2$ and $w_2=w_2^{-1}$ imply that $w_2(c)<w_2(c+1)<w_2(c+2)$. It follows that $$
w_2(c)=c+1,\,\, w_2(c+1)=c+2<w_2(c+2) .
$$
Combining the above inequality with the assumption that $s_{c+1}w_2 s_{c}s_{c+1}<w_2 s_{c}s_{c+1}$ and $w_2=w_2^{-1}$, we can deduce that $$
c+1=s_{c+1}s_c w_2(c+1)>s_{c+1}s_c w_2(c+2)=w_2(c+2),
$$
which is again a contradiction. This proves the inequality $s_{c+1}w_2<w_2$. The remaining part of the lemma follows from Corollary \ref{length0} at once.
\end{proof}

\begin{lem} \label{keyobser2b} Let $w_2\in I_\ast$. Suppose that $\ell(s_{c+1}s_c w_2 s_{c})=\ell(w_2)+3$, $s_{c}w_2\neq w_2s_{c}$, $s_{c+1}s_{c}w_2 s_c= s_c w_2 s_{c}s_{c+1}$ and $s_{c+1}w_2 s_{c}<w_2 s_{c}$, then $s_{c+1}w_2<w_2$.
\end{lem}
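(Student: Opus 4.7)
My plan is to read off the hypotheses as explicit conditions on the one-line notation of $w_2$, mirroring the permutation-level arguments of Lemma \ref{keyobser1} and Lemma \ref{keyobser2}. Throughout I set $a := w_2(c)$, $b := w_2(c+1)$, $d := w_2(c+2)$ and use $w_2 = w_2^{-1}$ freely.

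The first step is to pin down the entire chain
$w_2 \to s_c w_2 \to s_c w_2 s_c \to s_{c+1} s_c w_2 s_c$
as strictly length-increasing by $1$ at each stage. The conjugate involution $s_c w_2 s_c$ has length in $\{\ell(w_2)-2,\ell(w_2),\ell(w_2)+2\}$, and only the last value is compatible with $\ell(s_{c+1} s_c w_2 s_c) = \ell(w_2)+3$. Translating each of the three length-increasing steps into inequalities on the values $a,b,d$ then yields $a < b$, $s_c(a) < s_c(b)$, and $s_c(a) < s_c(d)$, using in the last inequality that $s_c w_2 s_c$ is an involution so that left-multiplication by $s_{c+1}$ is governed by the values $(s_c w_2 s_c)(c+1) = s_c(a)$ and $(s_c w_2 s_c)(c+2) = s_c(d)$.

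The next step is to exploit the commutation hypothesis $s_{c+1}(s_c w_2 s_c) = (s_c w_2 s_c) s_{c+1}$, which is equivalent to $\{(s_c w_2 s_c)(c+1),(s_c w_2 s_c)(c+2)\} = \{c+1,c+2\}$, i.e.\ $\{s_c(a), s_c(d)\} = \{c+1, c+2\}$. Combined with $s_c(a) < s_c(d)$ this forces $s_c(a) = c+1$ and $s_c(d) = c+2$, so $a = c$ and $d = c+2$. In particular $w_2$ fixes both $c$ and $c+2$.

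The final step is to locate $b = w_2(c+1)$. The hypothesis $s_c w_2 \neq w_2 s_c$ means $\{a,b\} \neq \{c,c+1\}$, so $b \neq c+1$; since $w_2$ already fixes $c$ and $c+2$ and is an involution, $b \notin \{c,c+2\}$ either; and $a = c < b$ gives $b > c$. Hence $b \geq c+3$, so $w_2(c+1) = b \geq c+3 > c+2 = w_2(c+2)$, which is precisely $s_{c+1} w_2 < w_2$. I expect the only delicate point to be the first step: confirming that $\ell(s_c w_2 s_c) = \ell(w_2)+2$ and that the last link $s_{c+1} s_c w_2 s_c > s_c w_2 s_c$ is length-increasing; once this is established, the rest of the argument is short bookkeeping with a single row of the one-line notation. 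The hypothesis $s_{c+1} w_2 s_c < w_2 s_c$ can be checked as a consistency check ($s_c(b) = b \geq c+3 > c+2 = s_c(d)$) but is not strictly needed in this route.
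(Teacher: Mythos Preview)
Your proof is correct, and it takes a genuinely different route from the paper's argument. The paper argues by contradiction: assuming $s_{c+1}w_2>w_2$, it combines this with $w_2 s_c>w_2$ and the hypothesis $s_{c+1}w_2 s_c<w_2 s_c$ to force $w_2(c)=c+1$, $w_2(c+1)=c+2$, $w_2(c+2)>c+2$, and then derives a numerical contradiction from $s_{c+1}w_2 s_c<w_2 s_c$. Your argument is direct: you exploit the commutation $s_{c+1}(s_c w_2 s_c)=(s_c w_2 s_c)s_{c+1}$ to pin down $w_2(c)=c$ and $w_2(c+2)=c+2$ exactly, then use $s_c w_2\neq w_2 s_c$ to push $w_2(c+1)\geq c+3$.

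What is noteworthy is that the two proofs use complementary subsets of the hypotheses: the paper never invokes the commutation identity $s_{c+1}s_c w_2 s_c=s_c w_2 s_c s_{c+1}$, while you never invoke $s_{c+1}w_2 s_c<w_2 s_c$ (and correctly observe it is redundant in your route). Your approach buys a slightly sharper conclusion---it actually determines $w_2(c)$, $w_2(c+2)$ and bounds $w_2(c+1)$---while the paper's approach buys a proof that parallels Lemma~\ref{keyobser2} almost verbatim. The one place to be careful, which you flag yourself, is the opening length decomposition: since $s_c w_2\neq w_2 s_c$, Corollary~\ref{length0} gives $\ell(s_c w_2 s_c)\in\{\ell(w_2)\pm 2\}$, and only $\ell(w_2)+2$ is compatible with $\ell(s_{c+1}s_c w_2 s_c)=\ell(w_2)+3$; this cleanly justifies that all three links in your chain are length-increasing.
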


\begin{proof} Suppose that $s_{c+1}w_2>w_2$. Then $s_{c+1}w_2>s_{c+1}w_2 s_{c}<w_2 s_{c}$. It follows that $$
s_{c+1}w_2(c+1)<s_{c+1}w_2(c).$$
Now $w_2s_c>w_2<s_{c+1}w_2$ and $w_2=w_2^{-1}$ imply that $w_2(c)<w_2(c+1)<w_2(c+2)$. It follows that $$
w_2(c)=c+1,\,\, w_2(c+1)=c+2,\,\,w_2(c+2)>c+2 .$$
Combining this with our assumption that $s_{c+1}w_2 s_{c}<w_2 s_{c}$ and $w_2=w_2^{-1}$, we can deduce that $$
c+2=s_c w_2(c+1)>s_c w_2(c+2)=w_2(c+2),
$$
which is a contradiction. So we must have that $s_{c+1}w_2<w_2$. This completes the proof of the lemma.
\end{proof}

\begin{lem} \label{simobser1} Let $w\in\Sym_n$ and $b\in\{1,2,\cdots,n\}$. Suppose that $w(t)<w(t+1)$, $\forall\, t <b$. If $w(b)\leq b$, then $w(i)=i$, for $1\leq i\leq b$.
\end{lem}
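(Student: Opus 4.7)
The plan is to prove this by a very short two-way sandwich argument: show that the hypothesis forces $w(i) \geq i$ for each $1 \leq i \leq b$, and then show the hypothesis $w(b) \leq b$ propagates backwards to give $w(i) \leq i$ for each $1 \leq i \leq b$. Combining the two inequalities yields $w(i) = i$.

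For the lower bound, I would argue by induction on $i$. Since $w$ is a permutation, $w(1) \geq 1$. If $w(i-1) \geq i-1$, then because $w(i) > w(i-1)$ and these are integers, $w(i) \geq i$. This only uses the strict monotonicity hypothesis $w(t) < w(t+1)$ for $t < b$ and needs no information about the value $w(b)$.

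For the upper bound, I would argue by downward induction starting from $i = b$. The base case is the hypothesis $w(b) \leq b$. For the inductive step, if $w(i) \leq i$ with $i \leq b$, then strict monotonicity $w(i-1) < w(i) \leq i$ gives $w(i-1) \leq i-1$. Putting the two bounds together gives $w(i) = i$ for $1 \leq i \leq b$, completing the proof.

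There is really no obstacle here; this is a standard pigeonhole-style observation about an increasing prefix of a permutation, and the only thing to be careful about is handling the edge cases $i = 1$ and $i = b$ cleanly, which the two inductions above already do.
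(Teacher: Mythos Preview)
Your proof is correct and essentially the same as the paper's. The paper simply writes the chain $1\leq w(1)<w(2)<\cdots<w(b)\leq b$ and observes that $b$ strictly increasing integers in $\{1,\dots,b\}$ must be $1,\dots,b$ in order; your two inductions just make this pigeonhole observation explicit.
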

\begin{proof} By assumption, $1\leq w(1)<w(2)<w(3)<\cdots<w(b)\leq b$. It follows at once that $w(i)=i$, for any $1\leq i\leq b$.
\end{proof}

\begin{lem} \label{simobser1b} Let $w\in\Sym_n$ and $b\in\{1,2,\cdots,n\}$. Suppose that $w(t)<w(t+1)$, $\forall\, t \geq b$. If $w(b)\geq b$, then $w(j)=j$, for $b\leq j\leq n$.
\end{lem}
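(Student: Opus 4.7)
The plan is to mirror the proof of Lemma \ref{simobser1}, using the monotonicity hypothesis together with a simple counting argument in the range $\{b,b+1,\ldots,n\}$. Since $w(t)<w(t+1)$ for all $t\geq b$, we have the strict chain
$$
w(b)<w(b+1)<\cdots<w(n).
$$
By hypothesis $w(b)\geq b$, and since $w$ is a permutation of $\{1,2,\ldots,n\}$ we have $w(j)\leq n$ for every $j$. Therefore
$$
b\leq w(b)<w(b+1)<\cdots<w(n)\leq n.
$$

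The key observation is then purely combinatorial: these $n-b+1$ strictly increasing integers lie in the set $\{b,b+1,\ldots,n\}$, which itself has exactly $n-b+1$ elements. Hence the inequalities must all be equalities in the natural order, i.e.\ $w(j)=j$ for every $b\leq j\leq n$. There is no real obstacle here; the only thing to note is that we do not need to use the involution property or the Coxeter structure at all, only that $w$ is a permutation and the values $w(b),\ldots,w(n)$ are pairwise distinct. This finishes the proof in one short paragraph, exactly parallel to Lemma \ref{simobser1} but working from the right end of the interval rather than the left.
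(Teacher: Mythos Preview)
Your proof is correct and follows exactly the same approach as the paper's: both establish the chain $b\leq w(b)<w(b+1)<\cdots<w(n)\leq n$ and conclude immediately that $w(j)=j$ for $b\leq j\leq n$. Your added remark that this is a pigeonhole/counting observation is just an explicit spelling-out of what the paper's ``it follows at once'' leaves implicit.
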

\begin{proof} By assumption, $b\leq w(b)<w(b+1)<w(b+2)<\cdots<w(n)\leq n$. It follows at once that $w(j)=j$, for any $b\leq j\leq n$.
\end{proof}
\bigskip

\section{Reduced $I_\ast$-expression and braid $I_{\ast}$-transformation}

A well-known classical fact of Matsumoto (\cite{Mathas}) says that any two reduced expressions for an element in $\Sym_n$ can be transformed into each other through a series of braid transformations. In Lemma \ref{braid0} we have shown that any braid $I_\ast$-transformations on reduced $I_\ast$-expression for a given $w\in I_\ast$ do not change the element $w$ itself. The following theorem says something more than this.

\begin{thm} \label{mainthm0} Let $w\in I_\ast$. Then any two reduced $I_\ast$-expressions for $w$ can be transformed into each other through a series of braid $I_\ast$-transformations.
\end{thm}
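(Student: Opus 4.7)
The plan is to proceed by strong induction on $k = \rho(w)$, the $I_\ast$-length of $w$. The base cases $k \leq 1$ are immediate since the reduced $I_\ast$-expression is essentially unique. For the inductive step with $k \geq 2$, let $E_1 = (s_{i_1}, \ldots, s_{i_k})$ and $E_2 = (s_{j_1}, \ldots, s_{j_k})$ be two reduced $I_\ast$-expressions of $w$.

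If $s_{i_1} = s_{j_1}$, I appeal to Lemma \ref{square}: the suffixes $(s_{i_2}, \ldots, s_{i_k})$ and $(s_{j_2}, \ldots, s_{j_k})$ are both reduced $I_\ast$-expressions for $s_{i_1} \ltimes w$, which has $I_\ast$-length $k-1$. The inductive hypothesis produces a chain of braid $I_\ast$-transformations between them, and I lift this chain to one between $E_1$ and $E_2$ by prepending $s_{i_1}$ to every intermediate sequence (each lift remains reduced because $\rho(w) = \rho(s_{i_1} \ltimes w) + 1$). If $s_{i_1} \neq s_{j_1}$, I set $s = s_{i_1}$, $t = s_{j_1}$ and apply Exchange (Proposition \ref{exchange}) with $t$ prepended to $E_1$; since $t \ltimes w < w$, this yields an index $a \in \{1, \ldots, k\}$ such that $t \ltimes w$ equals $E_1$ with entry $s_{i_a}$ removed. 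When $a \geq 2$ this produces a new reduced $I_\ast$-expression of $w$ beginning with $(t, s)$. If $|s - t| > 1$, I swap the initial $(t, s)$ to $(s, t)$ via the commutation braid $I_\ast$-transformation (Move 3 of Definition \ref{braid1}); the resulting expression starts with $s$ and is connected to $E_1$ by the first case, while the $(t, s, \ldots)$ expression is connected to $E_2$ similarly. If $|s - t| = 1$, I iterate the Exchange once more, aiming to produce a reduced $I_\ast$-expression of $w$ beginning with the alternating word $(s, t, s)$, after which the braid triple move (Moves 1/2) bridges to $(t, s, t, \ldots)$ and the first case finishes the argument on each side.

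The main obstacle is the degenerate situation where Exchange returns $a = 1$, which forces $s \ltimes w = t \ltimes w$. Unlike the classical Matsumoto setting, this phenomenon genuinely occurs in the $I_\ast$-picture (for example, $w = w_0 \in \Sym_4$ with $s = s_1$ and $t = s_3$ satisfies $s \ltimes w = t \ltimes w = (1,3)(2,4)$), and the naive iteration of Exchange does not directly produce an alternating-start expression. To handle this case I plan to use the permutation-theoretic content of Corollaries \ref{braid0cor} and \ref{braid0cor2} together with Lemmas \ref{keyobser1}, \ref{keyobser2}, \ref{keyobser2b}; these rule out the pathological configurations and allow one to locate an auxiliary simple reflection $r$ that serves as a bridge. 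A third reduced $I_\ast$-expression of $w$ starting with $r$ is then extracted, and the original expressions $E_1, E_2$ are linked to it by middle braid triple moves combined with the first case, as illustrated for $w = w_0 \in \Sym_4$ by the chain $(s_1, s_2, s_1, s_3) \leftrightarrow (s_2, s_1, s_2, s_3) \leftrightarrow (s_2, s_3, s_2, s_1) \leftrightarrow (s_3, s_2, s_3, s_1)$. The careful combinatorial bookkeeping required to produce such a bridge in general, exploiting the permutation structure of $\Sym_n$ in an essential way, is the technical heart of the proof.
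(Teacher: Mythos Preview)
Your overall architecture---strong induction on $\rho(w)$, reduction to matching initial letters, Exchange, and case splitting on $|s-t|$---is the same as the paper's. The treatment of the easy cases ($s_{i_1}=s_{j_1}$; $a\ge 2$ with $|s-t|>1$) is fine, and your identification of the degenerate case $a=1$ (the paper's $a=0$, where $s\ltimes w=t\ltimes w$) is correct; the paper indeed shows that this forces $|s-t|>1$ and then finds an auxiliary reflection to bridge, much as your $\Sym_4$ example suggests.

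The gap is in the case $a\ge 2$ with $|s-t|=1$. You assert that iterating Exchange produces a reduced $I_\ast$-expression of $w$ starting with $(s,t,s)$, but this can genuinely fail. After the first Exchange you have the reduced expression $(t,s,R)$ for $w$; applying Exchange again with $s$ may delete the entry $s$ in position~2, yielding a second reduced expression $(s,t,R)$ for $w$ with the \emph{same} tail $R$. Since $|s-t|=1$, the passage $(t,s,R)\leftrightarrow(s,t,R)$ is not one of the braid $I_\ast$-transformations of Definition~\ref{braid1}, and in this situation no reduced $I_\ast$-expression of $w$ begins with $(s,t,s)$ or $(t,s,t)$ at all. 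Concretely, take $n=7$, $s=s_3$, $t=s_4$, $w_1=(2,6)$; then $w=s_4\ltimes s_3\ltimes w_1=s_3\ltimes s_4\ltimes w_1$ and one checks $\rho(s_3\ltimes s_4\ltimes s_3\ltimes w)=\rho(w)-1$, so $(s_3,s_4,s_3,\dots)$ is never reduced for $w$.

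This is exactly the paper's ``Case~4'', and it is where Lemmas~\ref{keyobser1}, \ref{keyobser2}, \ref{keyobser2b} are actually deployed (not in the degenerate $a=1$ case as you suggest). The paper shows that in Case~4 one has $w_1(b-1)=b-1$, $w_1(b)=b$, $w_1(b+1)=b+1$, and then carries out a separate permutation analysis (including an explicit description of $w_1$ via Figure~1) to exhibit a chain of braid $I_\ast$-transformations linking $(s_b,s_{b-1},w_1)$ and $(s_{b-1},s_b,w_1)$. Your proposal needs to recognise this second obstruction and supply this analysis; without it the induction does not close.
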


\begin{proof} We prove the theorem by induction on $\rho(w)$. Suppose that the theorem holds for any $w\in I_\ast$  with $\rho(w)\leq k$. Let $w\in I_\ast$ with $\rho(w)=k+1$. Let $(s_{c},s_{i_1},s_{i_2},\cdots,s_{i_k})$ and $(s_{b},s_{j_1},s_{j_2},\cdots,s_{j_k})$ be two reduced $I_\ast$-expressions for $w\in I_\ast$. We need to prove that \begin{equation}\label{goal}
(c,i_1,\cdots,i_k)\longleftrightarrow (b,j_1,\cdots,j_k).
\end{equation}
If $b=c$ then by induction hypothesis $(i_1,\cdots,i_k)\longleftrightarrow (j_1,\cdots,j_k)$ and hence (\ref{goal}) follows. Henceforth we assume that $b\neq c$.

By Lemma \ref{square}, $\rho(s_b\ltimes w)=\rho(s_{j_1}\ltimes s_{j_2}\ltimes\cdots\ltimes s_{j_k})=k<k+1$. It follows from Lemma \ref{rankfunc} that $\ell(s_bw)=\ell(w)-1$.
Equivalently, $$
\ell(s_b(s_{c}\ltimes s_{i_1}\ltimes s_{i_2}\ltimes\cdots\ltimes s_{i_k}))=\ell(s_{c}\ltimes s_{i_1}\ltimes s_{i_2}\ltimes\cdots\ltimes s_{i_k})-1 .
$$
Applying  Lemma \ref{rankfunc} again, we can deduce that $$
\rho(s_b\ltimes(s_{c}\ltimes s_{i_1}\ltimes s_{i_2}\ltimes\cdots\ltimes s_{i_k}))=k .
$$
We set $i_0:=c$. Applying Proposition \ref{exchange}, we get that $$
s_b\ltimes(s_{i_0}\ltimes s_{i_1}\ltimes s_{i_2}\ltimes\cdots\ltimes s_{i_k})=s_{i_0}\ltimes s_{i_1}\ltimes s_{i_2}\ltimes\cdots
\ltimes s_{i_{a-1}}\ltimes s_{i_{a+1}}\ltimes\cdots\ltimes s_{i_k}
$$
for some $0\leq a\leq k$. In particular, $s_{i_0}\ltimes s_{i_1}\ltimes s_{i_2}\ltimes\cdots
\ltimes s_{i_{a-1}}\ltimes s_{i_{a+1}}\ltimes\cdots\ltimes s_{i_k}=s_{j_1}\ltimes\cdots\ltimes s_{j_k}$.

Since $$
s_b\ltimes s_{i_0}\ltimes s_{i_1}\ltimes s_{i_2}\ltimes\cdots
\ltimes s_{i_{a-1}}\ltimes s_{i_{a+1}}\ltimes\cdots\ltimes s_{i_k}=s_{i_0}\ltimes s_{i_1}\ltimes s_{i_2}\ltimes\cdots\ltimes s_{i_k},
$$
it is clear that $(b,i_0,i_1,i_2,\cdots,i_{a-1},i_{a+1},\cdots,i_k)$ is a reduced $I_\ast$-expression for $w$.
\smallskip

We claim that for any $0\leq a\leq k$, \begin{equation}\label{goal2} (b,i_0,i_1,i_2,\cdots,i_{a-1},i_{a+1},\cdots,i_k)\longleftrightarrow
(i_0,i_1,i_2,\cdots,i_{a-1},i_a,i_{a+1},\cdots,i_k),\end{equation}
whenever $$
s_b\ltimes s_{i_0}\ltimes s_{i_1}\ltimes s_{i_2}\ltimes\cdots
\ltimes s_{i_{a-1}}\ltimes s_{i_{a+1}}\ltimes\cdots\ltimes s_{i_k}=s_{i_0}\ltimes s_{i_1}\ltimes s_{i_2}\ltimes\cdots\ltimes s_{i_k}
$$
holds. Once this is proved, we can deduce from induction hypothesis that $$(i_0,i_1,i_2,\cdots,i_{a-1},i_{a+1},\cdots,i_k)\longleftrightarrow (j_1,j_2,\cdots,j_k)$$ because $s_{i_0}\ltimes s_{i_1}\ltimes s_{i_2}\ltimes\cdots
\ltimes s_{i_{a-1}}\ltimes s_{i_{a+1}}\ltimes\cdots\ltimes s_{i_k}=s_{j_1}\ltimes\cdots\ltimes s_{j_k}$,
and hence $(b,i_0,i_1,i_2,\cdots,i_{a-1},i_{a+1},\cdots,i_k)\longleftrightarrow (b,j_1,j_2,\cdots,j_k)$. Composing these transformations, we prove (\ref{goal}). That is, $(i_0,i_1,i_2,\cdots,i_k)\longleftrightarrow (b,j_1,j_2,\cdots,j_k)$.

The remaining part of the argument is devote to the proof of (\ref{goal2}). First, we assume that $a>0$. If $|b-i_0|>1$, then by Lemma \ref{braid0}, $$
s_b\ltimes s_{i_0}\ltimes s_{i_1}\ltimes\cdots
\ltimes s_{i_{a-1}}\ltimes s_{i_{a+1}}\ltimes\cdots\ltimes s_{i_k}=s_{i_0}\ltimes s_b\ltimes s_{i_1}\ltimes\cdots
\ltimes s_{i_{a-1}}\ltimes s_{i_{a+1}}\ltimes\cdots\ltimes s_{i_k}.
$$
By induction hypothesis, $$(b,i_1,i_2,\cdots,i_{a-1},i_{a+1},\cdots,i_k)\longleftrightarrow (i_1,i_2,\cdots,i_{a-1},i_a,i_{a+1},\cdots,i_k),$$ and hence $$\begin{aligned}
&(b,i_0,i_1,i_2,\cdots,i_{a-1},i_{a+1},\cdots,i_k)\longleftrightarrow (i_0,b,i_1,i_2,\cdots,i_{a-1},i_{a+1},\cdots,i_k)\\
&\qquad \longleftrightarrow (i_0,i_1,i_2,\cdots,i_{a-1},i_a,i_{a+1},\cdots,i_k), \end{aligned}$$
where the second ``$\longleftrightarrow$" follows from induction hypothesis. So we are done in this case. Henceforth, we can assume that $|b-i_0|=1$.

Without loss of generality we can assume that $b=i_0+1$. The case when $b=i_0-1$ is exactly the same and is left to the readers. Let $w_1:=s_{i_1}\ltimes s_{i_2}\ltimes\cdots
\ltimes s_{i_{a-1}}\ltimes s_{i_{a+1}}\ltimes\cdots\ltimes s_{i_k}$. There are the following four possibilities:

\smallskip
{\it Case 1.} $s_{i_0}w_1\neq w_1 s_{i_0}$ and $s_b(s_{i_0}w_1s_{i_0})\neq (s_{i_0}w_1s_{i_0})s_b$. In this case, we have that $$
s_{i_0}\ltimes w_1=s_{i_0}w_1s_{i_0}, \quad s_b\ltimes s_{i_0}\ltimes w_1=s_b s_{i_0}w_1 s_{i_0}s_b .
$$
Since $\rho(s_{i_0}\ltimes(s_b\ltimes s_{i_0}\ltimes w_1))=\rho(s_{i_0}\ltimes(s_{i_0}\ltimes s_{i_1}\ltimes\cdots\ltimes s_{i_k}))=k$, it follows that
$s_{i_0}(s_b\ltimes s_{i_0}\ltimes w_1)<s_b\ltimes s_{i_0}\ltimes w_1$. Applying Lemma \ref{keyobser1}, we can deduce that $s_{b}w_1 s_{i_0}s_b<w_1 s_{i_0}s_b$.
Now we are in a position to apply Lemma \ref{keyobser2} so that we can deduce that $s_{b}w_1<w_1$. Applying Corollary \ref{deletion2}, we see that $w_1=s_b\ltimes w_2$ with $w_2\in I_\ast$ and $(s_b,w_2)$ being reduced. Thus by Lemma \ref{braid0}, $$
s_b\ltimes s_{i_0}\ltimes w_1=s_b\ltimes s_{i_0}\ltimes s_b\ltimes w_2=s_{i_0}\ltimes s_b\ltimes s_{i_0}\ltimes w_2 .
$$
On the other hand, recall that $$
s_{i_0}\ltimes s_{i_1}\ltimes\cdots\ltimes s_k=s_b\ltimes s_{i_0}\ltimes w_1 .
$$
It follows that $$
s_{i_1}\ltimes s_{i_2}\ltimes\cdots\ltimes s_k=s_b\ltimes s_{i_0}\ltimes w_2 .
$$
Now using induction hypothesis, we see that $(i_1,i_2,\cdots,i_k)\longleftrightarrow (b,i_0,w_2)$, and hence $(i_0,i_1,i_2,\cdots,i_k)\longleftrightarrow (i_0,b,i_0,w_2)$. Composing this with the transformation
$(i_0,b,i_0)\longleftrightarrow(b,i_0,b)$, we can get that $$(i_0,i_1,i_2,\cdots,i_k)\longleftrightarrow (b,i_0,b,w_2)=(b,i_0,w_1).$$ It follows that $$(i_0,i_1,i_2,\cdots,i_{a-1},i_a,i_{a+1},\cdots,i_k)\longleftrightarrow
(b,i_0,i_1,i_2,\cdots,i_{a-1},i_{a+1},\cdots,i_k) $$ as required.

\smallskip
{\it Case 2.} $s_{i_0}w_1\neq w_1 s_{i_0}$ and $s_b(s_{i_0}w_1s_{i_0})=(s_{i_0}w_1s_{i_0})s_b$. In this case, we have that $$
s_{i_0}\ltimes w_1=s_{i_0}w_1s_{i_0}, \quad s_b\ltimes s_{i_0}\ltimes w_1=s_b s_{i_0}w_1 s_{i_0}=s_{i_0}w_1 s_{i_0}s_b .
$$
Since $\rho(s_{i_0}\ltimes(s_b\ltimes s_{i_0}\ltimes w_1))=\rho(s_{i_0}\ltimes(s_{i_0}\ltimes s_{i_1}\ltimes\cdots\ltimes s_{i_k}))=k$, it follows that
$s_{i_0}(s_b\ltimes s_{i_0}\ltimes w_1)<s_b\ltimes s_{i_0}\ltimes w_1$. That is, $s_{i_0}s_bs_{i_0}w_1s_{i_0}<s_bs_{i_0}w_1s_{i_0}$. Applying Lemma \ref{keyobser1}, we can deduce that $s_{b}w_1 s_{i_0}<w_1 s_{i_0}$.
Once again we are in a position to apply Lemma \ref{keyobser2b} so that we can deduce that $s_{b}w_1<w_1$. Now one can repeat the same argument used in the proof of Case 1 to complete the remaining proof in this case.

\smallskip
{\it Case 3.} $s_{i_0}w_1=w_1 s_{i_0}$ and $s_b(s_{i_0}w_1)=(s_{i_0}w_1)s_b$. In this case, we have that $$
s_{i_0}\ltimes w_1=s_{i_0}w_1=w_1s_{i_0}, \quad s_b\ltimes s_{i_0}\ltimes w_1=s_b s_{i_0}w_1=w_1s_{i_0}s_b .
$$
Since $\rho(s_{i_0}\ltimes(s_b\ltimes s_{i_0}\ltimes w_1))=\rho(s_{i_0}\ltimes(s_{i_0}\ltimes s_{i_1}\ltimes\cdots\ltimes s_{i_k}))=k$, it follows that
$s_{i_0}(s_b\ltimes s_{i_0}\ltimes w_1)<s_b\ltimes s_{i_0}\ltimes w_1$. That is, $s_{i_0}s_bs_{i_0}w_1<s_bs_{i_0}w_1$. Applying Lemma \ref{keyobser1}, we can deduce that $s_{b}w_1 <w_1 $. Hence
$w_1s_b<w_1$ as $w_1=w_1^{-1}$. On the other hand, since $(s_b,s_{i_0},w_1)$ is a reduced sequence, by Lemma \ref{rankfunc},
$$\ell(s_{i_0}w_1s_b)=\ell(s_b\ltimes s_{i_0}\ltimes w_1)=\ell(w_1)+2, $$ which is a contradiction. Therefore, this case can not happen.

\smallskip
{\it Case 4.} $s_{i_0}w_1=w_1 s_{i_0}$ and $s_b(s_{i_0}w_1)\neq (s_{i_0}w_1)s_b$. In this case, we have that $$
s_{i_0}\ltimes w_1=s_{i_0}w_1=w_1s_{i_0}, \quad s_b\ltimes s_{i_0}\ltimes w_1=s_b s_{i_0}w_1 s_{b}=s_b w_1s_{i_0}s_b .
$$
Since $\rho(s_{i_0}\ltimes(s_b\ltimes s_{i_0}\ltimes w_1))=\rho(s_{i_0}\ltimes(s_{i_0}\ltimes s_{i_1}\ltimes\cdots\ltimes s_{i_k}))=k$, it follows that
$s_{i_0}(s_b\ltimes s_{i_0}\ltimes w_1)<s_b\ltimes s_{i_0}\ltimes w_1$. That is, $s_{i_0}s_bs_{i_0}w_1s_{b}<s_bs_{i_0}w_1s_{b}$. Applying Lemma \ref{keyobser1}, we can deduce that $s_{b}w_1 s_{b}<w_1 s_{b}$. Note that $(s_b,s_{i_0},w_1)$ is a reduced sequence. Applying Lemma \ref{rankfunc} we know that $$
\ell(s_b s_{i_0}w_1s_b)=\ell(s_b w_1s_{i_0}s_b)=\ell(w_1)+3 . $$ It follows that $w_1s_b>w_1<s_b w_1$. Now we have that $s_{b}w_1 s_{b}<w_1 s_{b}>w_1$. Applying Corollary \ref{length0}, we get that $s_b w_1=w_1s_b$. Since
$$s_{i_0}s_bw_1=s_{i_0}w_1s_b\neq s_bs_{i_0}w_1=s_bw_1s_{i_0},
$$
it follows that
$$
s_b\ltimes s_{i_0}\ltimes w_1=s_b s_{i_0}w_1 s_{b}=s_b s_{i_0}s_b w_1=s_{i_0}s_bs_{i_0}w_1=s_{i_0}s_b w_1 s_{i_0}=s_{i_0}\ltimes s_b\ltimes w_1.$$
By induction hypothesis, $(i_0,b,w_1)\longleftrightarrow (i_0,i_1,i_2,\cdots,i_k)$. It remains to show that  $(b,i_0,w_1)\longleftrightarrow (i_0,b,w_1)$.

\smallskip
Recall that $i_0=c=b-1$. Since $s_{b}w_1 s_{b}<w_1 s_{b}>w_1$ and $w_1=w_1^{-1}$ imply that $w_1(b)<w_1(b+1)$ and $s_{b}w_1(b)>s_{b}w_1(b+1)$, it follows that $w_1(b)=b$, $w_1(b+1)=b+1$. Since $s_{b-1}w_1=w_1s_{b-1}$, it follows that $s_{b-1}w_1(b-1)=w_1s_{b-1}(b-1)=w_1(b)=b$, and hence $w_1(b-1)=b-1$. There are the following two subcases:

\smallskip
{\it Subcase 1.} There exists some $t\leq b-3$ or $t\geq b+2 $ such that $s_t w_1<w_1$. In this case, we obtain that $w_1=s_t\ltimes w_2$ with $w_2\in I_\ast$ and $(s_t,w_2)$ being reduced. It is easy to see that
$$
(s_b,s_{b-1},w_1)\longleftrightarrow (s_b,s_{b-1},s_t,w_2)\longleftrightarrow (s_b,s_t,s_{b-1},w_2) \longleftrightarrow (s_t,s_b,s_{b-1},w_2),$$
$$
(s_{b-1},s_b,w_1)\longleftrightarrow (s_{b-1},s_b,s_t,w_2)\longleftrightarrow (s_{b-1},s_t,s_b,w_2)\longleftrightarrow (s_t,s_{b-1},s_b,w_2).$$
By induction hypothesis, we have that $(s_t,s_b,s_{b-1},w_2)\longleftrightarrow(s_t,s_{b-1},s_b,w_2)$ as $\rho(s_b\ltimes s_{b-1}\ltimes w_2)=k$. Therefore, $(s_b,s_{b-1},w_1)\longleftrightarrow(s_{b-1},s_b,w_1)$ as required.

\smallskip
{\it Subcase 2.} For any $t\leq b-3$ or $t\geq b+2 $, we always have that $s_t w_1>w_1$ and hence $w_1(t)<w_1(t+1)$. In this case, assume that $w_1(b-2)\leq b-2$. Using Lemma \ref{simobser1} we can deduce that $w_1(i)=i$ for any $1\leq i\leq b-2$. Hence $w(b+2)\geq b+2 $, which implies that $w_1(j)=j$ for any $b+2\leq j\leq n$ (by Lemma \ref{simobser1b} again). Therefore, $w_1=1$. Clearly we get $(s_b,s_{b-1})\longleftrightarrow(s_{b-1},s_b)$ as required.

Therefore it suffices to consider the situation when $w_1(b-2)> b-2$. By a similar argument as in the last paragraph, we can only consider the situation when $w_1(b+2)< b+2$.

If $b+2\leq w_1(t)<w_1(t+1)$ for some $t\leq b-3$, then we must have that $w_1(t+1)=w_1(t)+1$ because otherwise $b+2\leq w_1(t)<z<w_1(t+1)$ implies that $t<w(z)<t+1$, which is impossible.

Now suppose that $w_1(b+2)=b-1-r$, for some $r>0$. Then the discussion in the last paragraph and the fact that $w_1^2=1$ imply that $$\begin{aligned}
& w_1(b-1-r)=b+2,w_1(b-r)=b+3,\cdots,w_1(b-2)=b+1+r,\\
& w_1(b+2)=b-1-r,w_1(b+3)=b-r,\cdots,w_1(b+1+r)=b-2.\end{aligned}$$
In particular, $w_1(b-2-r)\leq b-2-r$ , $w_1(b+2-r)\geq b+2-r$. Applying Lemma \ref{simobser1} and Lemma \ref{simobser1b} we see that $w_1(i)=i$ for any $i\leq b-2-r$ or $i\geq b+2+r$. The permutation $w_1$ can be depicted in Figure 1 as follows:
\begin{center}
\begin{tikzpicture}[scale=0.6]
    \useasboundingbox (-1,-1) rectangle (20,3);
    \tikzstyle{every node}=[font=\tiny]
    \node at(0,0)[below]{$1$};
    \node at(3,0)[below,xshift=-3mm]{$b-2-r$};
    \node at(4,0)[below,xshift=3mm]{$b-1-r$};
    \node at(7,0)[below,xshift=-1.5mm]{$b-2$};
    \node at(8,0)[below]{$b-1$};
    \node at(9,0)[below]{$b$};
    \node at(10,0)[below]{$b+1$};
    \node at(11,0)[below,xshift=1.5mm]{$b+2$};
    \node at(14,0)[below,xshift=-3mm]{$b+1+r$};
    \node at(15,0)[below,xshift=3mm]{$b+2+r$};
    \node at(18,0)[below]{$n$};
    \node at(0,2)[above]{$1$};
    \node at(3,2)[above,xshift=-3mm]{$b-2-r$};
    \node at(4,2)[above,xshift=3mm]{$b-1-r$};
    \node at(7,2)[above,xshift=-1.5mm]{$b-2$};
    \node at(8,2)[above]{$b-1$};
    \node at(9,2)[above]{$b$};
    \node at(10,2)[above]{$b+1$};
    \node at(11,2)[above,xshift=1.5mm]{$b+2$};
    \node at(14,2)[above,xshift=-3mm]{$b+1+r$};
    \node at(15,2)[above,xshift=2.5mm]{$b+2+r$};
    \node at(18,2)[above]{$n$};
    \foreach \i in {0,3,4,7,8,9,10,11,14,15,18}
      {
        \path(\i,0) coordinate (x\i) (\i,2) coordinate (y\i);
        \fill(x\i) circle(3pt);
        \fill(y\i) circle(3pt);
      }
    \foreach \i in {0,3,8,9,10,15,18}
      {
         \draw (x\i)--(y\i);
      }
     \draw (4,0)--(11,2)(7,0)--(14,2)(11,0)--(4,2)(14,0)--(7,2);
     \foreach \i in {0,4,11,15}
       { \draw[thick,dotted]
          (\i,0)--(\i+3,0)(\i,2)--(\i+3,2) ;
       }
\end{tikzpicture}

Figure $1$
\end{center}

Since $w_1(b-2)=b+1+r>b-1=w_1(b-1)$ implies that $s_{b-2}w_1<w_1$, it follows from Corollary \ref{deletion2}
that $w_1=s_{b-2}\ltimes w_2$ with $w_2\in I_\ast$ and $(s_{b-2},w_2)$ being reduced. Since $s_{b-2}w_1(b-1)=s_{b-2}(b-1)=b-2$ and $w_1s_{b-2}(b-1)=w_1(b-2)=b+r+1$, we get that $s_{b-2}w_1\neq w_1s_{b-2}$ and hence $w_2=s_{b-2}\ltimes s_{b-2}\ltimes w_2=s_{b-2}\ltimes w_1=s_{b-2}w_1s_{b-2}$. Furthermore, since $w_2(b-1)=s_{b-2}w_1s_{b-2}(b-1)=b+1+r$ and $w_2(b)=s_{b-2}w_1s_{b-2}(b)=b$, it follows that $s_{b-1}w_2<w_2$ and $w_2=s_{b-1}\ltimes w_3$ with $w_3\in I_\ast$ and $(s_{b-1},w_3)$ being reduced. Since $s_{b-1}w_2(b)=b-1$ and $w_2s_{b-1}(b)=b+r+1$, we have that $s_{b-1}w_2\neq w_2s_{b-1}$ and hence $w_3=s_{b-1}\ltimes s_{b-1}\ltimes w_3=s_{b-1}\ltimes w_2=s_{b-1}w_2s_{b-1}$.

We proceed further by similar argument. Since $w_3(b)=s_{b-1}s_{b-2}w_1s_{b-2}s_{b-1}(b)=b+1+r$ and $w_3(b+1)=s_{b-1}s_{b-2}w_1s_{b-2}s_{b-1}(b+1)=b+1$, it follows that $s_{b}w_3<w_3$ and $w_3=s_{b}\ltimes w_4$ with $w_4\in I_\ast$ and $(s_{b},w_4)$ being reduced. Now we obtain that $w_1=s_{b-2}\ltimes s_{b-1}\ltimes s_{b} \ltimes w_4$ and $(s_{b-2},s_{b-1},s_{b},w_4)$ is reduced. Now by induction hypothesis and
Lemma \ref{braid0},
$$\begin{aligned}
&(s_b,s_{b-1},w_1)\longleftrightarrow(s_b,s_{b-1},s_{b-2},s_{b-1},s_b,w_4)\longleftrightarrow\\
&\qquad (s_b,s_{b-2},s_{b-1},s_{b-2},s_b,w_4) \longleftrightarrow (s_{b-2},s_b,s_{b-1},s_{b-2},s_b,w_4)\end{aligned}
$$
and $$\begin{aligned} &(s_{b-1},s_b,w_1)\longleftrightarrow(s_{b-1},s_b,s_{b-2},s_{b-1},s_b,w_4)\longleftrightarrow (s_{b-1},s_{b-2},s_b,s_{b-1},s_b,w_4)\\
&\qquad \longleftrightarrow(s_{b-1},s_{b-2},s_{b-1},s_b,s_{b-1},w_4) \longleftrightarrow (s_{b-2},s_{b-1},s_{b-2},s_b,s_{b-1},w_4).\end{aligned} $$
Once again by induction hypothesis, $$
(s_{b-2},s_b,s_{b-1},s_{b-2},s_b,w_4) \longleftrightarrow (s_{b-2},s_{b-1},s_{b-2},s_b,s_{b-1},w_4), $$
This completes the proof of $(s_{b-1},s_b,w_1) \longleftrightarrow (s_b,s_{b-1},w_1)$. Hence we complete the proof of  (\ref{goal2}) when $a>0$.

\medskip
It remains to prove (\ref{goal2}) when $a=0$. In this case, we want to show that $$
(b,i_1,i_2,\cdots,i_k)\longleftrightarrow (c,i_1,i_2,\cdots,i_k).$$

We set $w_1:=s_{i_1}\ltimes s_{i_2}\ltimes\cdots\ltimes s_{i_k}$. Since $s_b\ltimes w_1=s_c\ltimes w_1 $ and $s_b\neq s_c$, it follows that $s_b\ltimes w_1=s_b w_1 s_b$ and $s_c\ltimes w_1=s_c w_1 s_c$, which imply that $w_1 s_b>w_1<w_1 s_c$ and hence that $w_1(b)<w_1(b+1)$, $w_1(c)<w_1(c+1)$. Since $\rho(s_c\ltimes (s_b\ltimes w_1))=\rho(w_1)=k$, it follows that $s_c (s_b\ltimes w_1)<s_b\ltimes w_1$. That is, $s_cs_bw_1s_b<s_bw_1s_b$, which forces that $s_bw_1s_b(c)>s_bw_1s_b(c+1)$.

We claim that $|b-c|>1$. Suppose this is not case. Without loss of generality we can assume that $c=b+1$. We have proved that $w_1(b)<w_1(b+1)<w_1(b+2)$ and $s_bw_1(b)=s_bw_1s_b(b+1)>s_bw_1s_b(b+2)=s_bw_1(b+2)$. It follows that $w_1(b)=b$, $w_1(b+2)=b+1$, a contradiction. This proves the claim that $|b-c|>1$.

Since $|b-c|>1$, $s_bw_1(c)=s_bw_1s_b(c)>s_bw_1s_b(c+1)=s_bw_1(c+1)$ and $w_1(c)<w_1(c+1)$. Therefore we can deduce that $w_1(c)=b$ and $w_1(c+1)=b+1$. There are two possibilities:

\smallskip
{\it Case 1.} $|b-c|=2$. Without loss of generality we can assume that $c=b+2$. In this case, we consider the following two subcases:

\smallskip
{\it Subcase 1.} There exists some $t\leq b-2$ or $t\geq b+4 $ such that $s_t w_1<w_1$. In this subcase, by Corollary \ref{deletion2}, we see that $w_1=s_t\ltimes w_2$ with
$w_2\in I_\ast$ and $(s_t,w_2)$ being reduced. By induction hypothesis and Lemma \ref{braid0}, we see that
$$\begin{aligned}
&(s_b,w_1)\longleftrightarrow(s_b,s_t,w_2)\longleftrightarrow (s_t,s_b,w_2),\\
&(s_{b+2},w_1)\longleftrightarrow(s_{b+2},s_t,w_2)\longleftrightarrow (s_t,s_{b+2},w_2),\\
&(s_t,s_b,w_2)\longleftrightarrow(s_t,s_{b+2},w_2).\end{aligned}$$
It follows that $(s_b,w_1)\longleftrightarrow(s_{b+2},w_1)$ as required.

\smallskip
{\it Subcase 2.} For any $t\leq b-2$ or $t\geq b+4 $, we always have $s_t w_1>w_1$ and hence $w_1(t)<w_1(t+1)$. In this subcase, we assume first that $w_1(b-1)\leq b-1$. Using Lemma \ref{simobser1} we have that $w_1(i)=i$ for any $1\leq i\leq b-1$. As a result, $w(b+4)\geq b+4$, which (by Lemma \ref{simobser1b}) in turn forces $w_1(j)=j$ for any $b+4\leq j\leq n$ . Therefore, $$w_1=(b,b+2)(b+1,b+3)=s_{b+1}s_bs_{b+2}s_{b+1}=s_{b+1}\ltimes s_b\ltimes s_{b+2}.$$
Consequently, by induction hypothesis and Lemma \ref{braid0}, we can deduce that $$\begin{aligned}
&(s_b,w_1)\longleftrightarrow(s_b,s_{b+1},s_{b},s_{b+2})\longleftrightarrow(s_{b+1},s_b,s_{b+1},s_{b+2})
\longleftrightarrow\\
&\qquad (s_{b+1},s_b,s_{b+2},s_{b+1})\longleftrightarrow(s_{b+1},s_{b+2},s_b,s_{b+1})\longleftrightarrow
(s_{b+1},s_{b+2},s_{b+1},s_b)\\
&\qquad\qquad \longleftrightarrow(s_{b+2},s_{b+1},s_{b+2},s_b)
\longleftrightarrow(s_{b+2},s_{b+1},s_b,s_{b+2})\longleftrightarrow(s_{b+2},w_1)\end{aligned}$$
as required.

\smallskip
To finish the proof in Subcase 2, we only need to consider the situation when $w_1(b-1)> b-1$. By a similar (and symmetric) argument as in the last paragraph, we can only consider the case when $w_1(b+4)< b+4$.

Recall that $w_1^2=1$ and $w_1(t)\leq w_1(t+1)$ for any $t\leq b-2$. If $w_1(t)\geq b+4$ for some $t\leq b-2$, then $w_1(t+1)=w_1(t)+1$ because otherwise $w_1(t)<q<w_1(t+1)$ implies that $t<w(q)<t+1$ which is impossible. Recall also that $w_1(b)=c=b+2$, $w_1(b+2)=w_1(c)=b$ and $w_1(b)<w_1(b+1)$, $w_1(b+2)<w_1(b+3)$. It follows that
$w_1(b-1)\not\in\{b,b+1,b+2,b+3\}$. In particular, $w_1(b-1)\geq b+4$. We write $w_1(b-1)=b+3+r$ for some $r>0$.
Since $w_1(b-1)=b+3+r>b+2=w_1(b)$ implies that $s_{b-1}w_1<w_1$, it follows that $w_1=s_{b-1}\ltimes w_2$ with $w_2\in I_\ast$ and $(s_{b-1},w_2)$ being reduced. Now we obtain that $$(s_b,w_1)\longleftrightarrow(s_b,s_{b-1},w_2)$$
and $$(s_{b+2},w_1)\longleftrightarrow(s_{b+2},s_{b-1},w_2)\longleftrightarrow (s_{b-1},s_{b+2},w_2).$$
By assumption, $$
s_b\ltimes s_{b-1}\ltimes w_2=s_b\ltimes w_1=s_{b+2}\ltimes w_1=s_{b-1}\ltimes s_{b+2}\ltimes w_2 .
$$
So we are in a position to apply (\ref{goal2}) in the case when $a=1$ (which we have already proved). Therefore, we are done in this case.

\smallskip
{\it Case 2.} $|b-c|>2$. Without loss of generality we can assume that $c>b+2$. There are two subcases:

\smallskip
{\it Subcase 1.} There exists some $t\leq b-2$ or $b+2 \leq t\leq c-2 $ or $t\geq c+2$ such that $s_t w_1<w_1$. In this case, we obtain that $w_1=s_t\ltimes w_2$ with
$w_2\in I_\ast$ and $(s_t,w_2)$ being reduced. By Lemma \ref{braid0},
$$\begin{aligned}
&(s_b,w_1)\longleftrightarrow(s_b,s_t,w_2)\longleftrightarrow (s_t,s_b,w_2),\\
&(s_{c},w_1)\longleftrightarrow(s_{c},s_t,w_2)\longleftrightarrow (s_t,s_{c},w_2).\end{aligned}$$
Note that $(s_t,s_b,w_2)\longleftrightarrow(s_t,s_{c},w_2)$ by induction hypothesis. As a result, we get that $(s_b,w_1)\longleftrightarrow(s_{c},w_1)$ as required.

\smallskip
{\it Subcase 2.} For any $t\leq b-2$ or $b+2 \leq t\leq c-2 $ or $t\geq c+2$, we always have that $s_t w_1>w_1$. That is, $w_1(t)<w_1(t+1)$.

Recall that $w_1^2=1$ and we have shown that $$
w_1(b)=c,\,\,w_1(c)=b,\,\,w_1(b+1)=c+1,\,\,w_1(c+1)=b+1 .
$$
We claim that either $s_{c-1}w_1<w_1$ or $s_{b+1}w_1<w_1$. Suppose this is not the case. That says, $s_{c-1}w_1>w_1$ and $s_{b+1}w_1>w_1$. Then we can deduce that $$c+1=w_1(b+1)<w_1(b+2)<w_1(b+3)<\cdots <w_1(c-2)<w_1(c-1)<w_1(c)=b,$$ which is a contradiction. This proves our claim.

Suppose that $s_{b+1}w_1<w_1$. Then $w_1=s_{b+1}\ltimes w_2$ with $w_2\in I_\ast$ and $(s_{b+1},w_2)$ being reduced. Now we obtain that $$(s_b,w_1)\longleftrightarrow(s_b,s_{b+1},w_2)$$ and $$(s_c,w_1)\longleftrightarrow(s_{c},s_{b+1},w_2)\longleftrightarrow (s_{b+1},s_{c},w_2).$$
By assumption, $$
s_b\ltimes s_{b+1}\ltimes w_2=s_b\ltimes w_1=s_c\ltimes w_1=s_{b+1}\ltimes s_c\ltimes w_2 .
$$
So we are in a position to apply (\ref{goal2}) in the case when $a=1$ (which we have already proved). Therefore, we are done in this case. By a similar argument, we can reduce the assertion when $s_{c-1}w_1<w_1$ to a statement of the form (\ref{goal2}) with $a=1$ (which we have already proved). Therefore, we complete the proof of the theorem.
\end{proof}

\bigskip
\section{The lower bound of the dimension of $\mathcal{H}^{\Q(u)}X_{\emptyset}$ when $\ast=\text{id}$ and $W=\Sym_n$}

Recall that $\ast=\text{id}$ and $W=\Sym_n$. In this section we shall prove that the dimension of $\mathcal{H}^{\Q(u)}X_{\emptyset}$ is bigger or equal than the number of involutions in $\Sym_n$.

It is well-known that $\mathcal{H}^{\Q(u)}$ is a split semisimple $\Q(u)$-algebra. To recall some well-known results in its representation theory, we need some combinatorics.

A composition of $n$ is a sequence of non-negative integers $\lam=(\lam_1,\lam_2,\cdots,\lam_r)$ such that $\sum_{i=1}^r\lam_i=n$. The composition $\lam=(\lam_1,\lam_2,\cdots,\lam_r)$ is called a partition if
$\lam_1\geq\lam_2\geq\cdots\geq\lam_r$. We use $\mathcal{P}_n$ to denote the set of partitions of $n$. Let $\lam\in\mathcal{P}_n$. The Young diagram of $\lam$ is the set
$$[\lam]=\bigl\{(a,c)\bigm|1\le c\le \lambda_a, a\ge1\bigr\}. $$
A $\lam$-tableau is a bijective map $\t: [\lam]\rightarrow\{1,2,\dots,n\}$. If~$\t$ is a $\lam$-tableau then set $\Shape(\t)=\blam$. A $\lam$-tableau $\t$ is said to be row (column) standard if the numbers $1,2,\dots,n$ increase
along the rows (columns) of $\t$, and standard if $\t$ is both row and column standard.
Let $\Std(\lam)$ be the set of standard $\lam$-tableaux.

Let $\lam,\mu\in\mathcal{P}_n$. If $\t$ is a standard $\lam$-tableau, then let $\t\downarrow_k$ be the
subtableau of $\t$ labeled by $1,\dots,k$ in $\t$. If
$\s\in\Std(\lam)$ and $\t\in\Std(\mu)$ then $\s$
dominates $\t$, and we write $\s\unrhd\t$, if
$\Shape(\s\downarrow_k)\unrhd\Shape(\t\downarrow_k)$, for $k=1,\dots,n$.
We write $\s\rhd\t$ if $\s\unrhd\t$ and $\s\ne\t$. Let $\tlam$ be the unique standard $\lam$-tableau such that
$\tlam\unrhd\t$ for all $\t\in\Std(\lam)$. Then $\tlam$ has the
numbers $1,\dots,n$ entered in order, from left to right and then top
to bottom along the rows of $\lam$. Let $\tllam$ be the unique standard $\lam$-tableau such that
$\tlam\unlhd\t$ for all $\t\in\Std(\lam)$. Then $\tllam$ has the
numbers $1,\dots,n$ entered in order, from top
to bottom and then left to right along the columns of $\lam$. If $\lambda=(\lambda_1,\lambda_2,\dots)$ is a partition then its conjugate is the partition $$\lambda'=(\lambda'_1,\lambda'_2,\dots),$$ where
$\lambda'_i=\#\{j\ge1|\lambda_j\ge i\}$. If $\t$ is a standard $\lambda$-tableau let $\t'$ be the standard $\lambda'$-tableau given by $\t'(r,c)=\t(c,r)$.

It is well-known that $\mathcal{H}^{\Q(u)}$ is a split semisimple algebra over $\Q(u)$. Following\,\, \cite[2.4]{M:gendeg}, let $\{f_{\s\t}|\s,\t\in\Std(\lam),\lam\in\mathcal{P}_n\}$ be the seminormal basis of $\mathcal{H}^{\Q(u)}$. By definition, for any $\lam\in\mathcal{P}_n$, $\s,\t,\u,\v\in\Std(\lam)$, we have that $$
f_{\s\t}f_{\u\v}=\delta_{\t\u}\gamma_{\t}f_{\s\v},
$$
where $\gamma_{\t}\in\Q(u)^{\times}$ is a nonzero scalar (which can be written down explicitly).
Furthermore, $\mathcal{H}^{\Q(u)}f_{\s\t}\cong\mathcal{H}^{\Q(u)}f_{\s\tlam}$ is a simple left $\mathcal{H}^{\Q(u)}$-module. We denote this module by $V_{\Q(u)}^{\lam}$. Then $\{V_{\Q(u)}^{\lam}|\lam\in\mathcal{P}_n\}$ is a complete set of pairwise non-isomorphic simple left $\mathcal{H}^{\Q(u)}$-modules.

For any subset $J\subseteq \{1,2,\cdots,n\}$, we use $\Sym_J$ to denote the standard Young subgroup of $\Sym_n$ generated by $\{s_i|i,i+1\in J\}$. Let $\lam\in\mathcal{P}_n$. The symmetric group $\Sym_n$ acts on the set of $\lam$-tableaux from the left hand-side. If $\t$ is a $\lam$-tableau with $\lam\in\mathcal{P}_n$, and $w\in\Sym_n$, we also define $$
\t w:=w^{-1}\t .
$$

Let $\lam$ be a composition of $n$. Let $\Sym_{\lam}$ be the row stabilizer of $\tlam$, which is the standard Young subgroup of $\Sym_n$ corresponding to $$
I_{\lam}:=\{1,2,\cdots,\lam_1\}\sqcup\{\lam_1+1,\lam_1+2,\cdots,\lam_1+\lam_2\}\sqcup\cdots .
$$
Let $\mathcal{H}_{u}(\Sym_{\lam})$ be the subalgebra of $\mathcal{H}_{u}$ generated by $\{T_i|s_i\in\Sym_{\lam}\}$. If $\t\in\Std(\lam)$ let $d(\t)$ be the permutation in $\Sym_n$ such that $\t=\tlam d(\t)$. Let $$\mathcal{D}_{\lam}:=\{d\in\Sym_n|\text{$\tlam d$ is row standard}\}.$$ Then $\mathcal{D}_\lam$ is the set of minimal length distinguished right coset representatives of $\Sym_{\lam}$ in $\Sym_n$. For any $\lam,\mu\in\mathcal{P}_n$, we set $\mathcal{D}_{\lam,\mu}:=\mathcal{D}_\lam\cap \mathcal{D}_\mu^{-1}$. Then $\mathcal{D}_{\lam,\mu}$ is the set of minimal length distinguished double coset representatives of $(\Sym_{\lam},\Sym_{\mu})$ in $\Sym_n$. Let $w_\lam\in\Sym_n$ such that $\tlam w_{\lam}=\tllam$. Then $w_{\lam}\in\mathcal{D}_{\lam,\lam'}$.

\begin{lem} \label{lb} Let $\lam\in\mathcal{P}_n$. Then $f_{\t\tlam}X_{\emptyset}f_{\tllam\tllam}\neq 0$ for any $\t\in\Std(\lam)$. In particular, we have that $$
\dim_{\Q(u)}\mathcal{H}^{\Q(u)}X_{\emptyset}\geq\sum_{\lam\in\mathcal{P}_n}\#\Std(\lam) .
$$
\end{lem}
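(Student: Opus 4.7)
The plan is to reduce the non-vanishing claim to a single matrix-entry computation, then use linear independence to derive the dimension bound. Expanding $X_\emptyset = \sum_{\mu \in \mathcal{P}_n} \sum_{\s,\v \in \Std(\mu)} c^\mu_{\s\v} f_{\s\v}$ in the seminormal basis and applying $f_{\s\t}f_{\u\v} = \delta_{\t\u} \gamma_\t f_{\s\v}$ gives
$$f_{\t\tlam} X_\emptyset f_{\tllam\tllam} = \gamma_\tlam \gamma_\tllam\, c^\lam_{\tlam,\tllam}\, f_{\t\tllam}.$$
Since $\gamma_\tlam, \gamma_\tllam \in \Q(u)^\times$ and $f_{\t\tllam}$ is a nonzero basis element, non-vanishing for every $\t \in \Std(\lam)$ is equivalent to the single scalar $c^\lam_{\tlam,\tllam}$ being nonzero. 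Under the Wedderburn identification $\mathcal{H}^{\Q(u)} \cong \bigoplus_\mu \End_{\Q(u)}(V_{\Q(u)}^\mu)$, this scalar equals (up to the nonzero factor $\gamma_\tllam$) the $(\tlam,\tllam)$ matrix coefficient of $X_\emptyset$ acting on $V^\lam_{\Q(u)}$ in the seminormal basis $\{v_\s\}_{\s \in \Std(\lam)}$.

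To prove this matrix entry is nonzero, I would exploit the length-additive factorization $W = \Sym_{\lam'} \cdot W^{\lam'}$, where $\Sym_{\lam'}$ is the Young subgroup preserving the columns of $\tllam$ and $W^{\lam'}$ is the set of minimal-length coset representatives. This gives
$$X_\emptyset = \Bigl[\sum_{x \in \Sym_{\lam'}} u^{-\ell(x)} T_x\Bigr]\cdot \Bigl[\sum_{y \in W^{\lam'}} u^{-\ell(y)} T_y\Bigr].$$
Since $i, i+1$ lie in the same column of $\tllam$ precisely when $s_i \in \Sym_{\lam'}$, the seminormal relation $T_i v_\tllam = -v_\tllam$ for such $s_i$ shows that the first factor acts on $v_\tllam$ as the signed Poincar\'e-type scalar $\alpha_\lam = \prod_i \prod_{j=1}^{\lam'_i}\sum_{k=0}^{j-1}(-u^{-1})^k \in \Q(u)^\times$. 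It therefore suffices to prove that the coefficient of $v_\tlam$ in $\bigl[\sum_{y \in W^{\lam'}} u^{-\ell(y)} T_y\bigr]\,v_\tllam$ is nonzero. I would combine this with the analogous factorization using the row stabilizer $\Sym_\lam$ at $\tlam$ (where $T_i v_\tlam = u^2 v_\tlam$ for $s_i \in \Sym_\lam$), together with the Murphy cellular-basis comparison from \cite{M:gendeg} expressing the seminormal basis elements in terms of $m_\lam T_{w_\lam} n_{\lam'}$-type products. This should pin down $c^\lam_{\tlam,\tllam}$ as an explicit product of quantum-integer factors coming from the axial-distance data of $\lam$, manifestly nonzero in $\Q(u)$. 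The principal obstacle is precisely this Murphy-to-seminormal bookkeeping: tracking the exact dominant term under the change of basis and verifying that the coefficients do not cancel.

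For the dimension bound, the identity $f_{\t\tlam} X_\emptyset = \gamma_\tlam \sum_{\v \in \Std(\lam)} c^\lam_{\tlam,\v}\, f_{\t,\v}$ shows that $f_{\t\tlam} X_\emptyset$ is a nonzero element of $\mathcal{H}^{\Q(u)} X_\emptyset$ (nonzero because $c^\lam_{\tlam,\tllam} \neq 0$), supported in the $\t$-row of the $\lam$-Wedderburn block. Distinct pairs $(\lam,\t)$ yield elements with disjoint supports and hence are linearly independent, giving the required $\sum_{\lam \in \mathcal{P}_n} \#\Std(\lam)$ independent elements in $\mathcal{H}^{\Q(u)} X_\emptyset$.
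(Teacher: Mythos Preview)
Your reduction to the single scalar $c^\lam_{\tlam,\tllam}$ is correct and mirrors the paper's first step (reducing $f_{\t\tlam}X_\emptyset f_{\tllam\tllam}\neq 0$ to $f_{\tlam\tlam}X_\emptyset f_{\tllam\tllam}\neq 0$), and your linear-independence argument for the dimension bound is fine.

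The genuine gap is the computation of $c^\lam_{\tlam,\tllam}$ itself. Your single coset factorization $\Sym_n=\Sym_{\lam'}\cdot W^{\lam'}$ exploits only one of the two idempotents; after peeling off the $\Sym_{\lam'}$-scalar you are still left with a sum over all of $W^{\lam'}$, and you yourself flag the remaining ``Murphy-to-seminormal bookkeeping'' as the principal obstacle. That obstacle is real: there is no obvious reason the many terms in $\sum_{y\in W^{\lam'}}u^{-\ell(y)}T_y$ should not produce cancellation in the $\tlam$-coefficient, and the vague appeal to $m_\lam T_{w_\lam}n_{\lam'}$-type products does not supply one.

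The paper bypasses this entirely by using the \emph{double} coset decomposition with respect to $(\Sym_\lam,\Sym_{\lam'})$ simultaneously. Writing each $w\in\Sym_n$ uniquely as $w_1 d w_2$ with $w_1\in\Sym_\lam$, $d\in\mathcal{D}_{\lam,\lam'}$, $w_2\in\mathcal{D}_{\lam d\cap\lam'}\cap\Sym_{\lam'}$ and lengths adding, one sandwiches $X_\emptyset$ between $f_{\tlam\tlam}$ and $f_{\tllam\tllam}$; the $\Sym_\lam$-part becomes a scalar on the left and the $\Sym_{\lam'}$-part a scalar on the right, leaving $\sum_{d\in\mathcal{D}_{\lam,\lam'}}(\text{scalar})\,f_{\tlam\tlam}T_d f_{\tllam\tllam}$. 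The key step you are missing is the vanishing lemma: if $d^{-1}\Sym_\lam d\cap\Sym_{\lam'}\neq\{1\}$ then picking $1\neq z=d^{-1}z_1 d$ in this intersection gives $u^{2\ell(z_1)}f_{\tlam\tlam}T_d f_{\tllam\tllam}=(-1)^{\ell(z)}f_{\tlam\tlam}T_d f_{\tllam\tllam}$, hence $f_{\tlam\tlam}T_d f_{\tllam\tllam}=0$. A classical fact of Dipper--James says $\Sym_\lam w_\lam\Sym_{\lam'}$ is the \emph{unique} double coset with trivial intersection, so only $d=w_\lam$ survives, and $f_{\tlam\tlam}T_{w_\lam}f_{\tllam\tllam}=\gamma_{\tlam}f_{\tlam\tllam}\neq 0$ follows from the triangularity of $f_{\tlam\tlam}T_{w_\lam}$ in the seminormal basis. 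No explicit axial-distance product or Murphy-basis transition is required.
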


\begin{proof} It suffices to show that $f_{\tlam\tlam}X_{\emptyset}f_{\tllam\tllam}\neq 0$ because
$f_{\t\tlam}f_{\tlam\tlam}=\gamma_{\tlam}f_{\t\tlam}$ for some $\gamma_{\tlam}\in \Q(u)^{\times}$.

By\,\, \cite[Lemma 1.1]{DJ1}, for any $w\in\Sym_n$, there exists a unique element $d\in\Sym_{\lam}w\Sym_{\mu}$ such that
$$
d\in\mathcal{D}_{\lam,\mu},\,\,w=w_1dw_2,\,w_1\in\Sym_{\lam}, w_2\in\mathcal{D}_{\lam d\cap\mu}\cap\Sym_{\mu},\,
\ell(w)=\ell(w_1)+\ell(d)+\ell(w_2),
$$
where $\lam d\cap\mu$ is the composition of $n$ corresponding to standard Young subgroup $d^{-1}\Sym_\lam d\cap\Sym_{\mu}$ of $\Sym_n$. In particular, $$
u^{-\ell(w)}T_w=(u^{-\ell(w_1)}T_{w_1})(u^{-\ell(d)}T_{d})(u^{-\ell(w_2)}T_{w_2}) .
$$

If $w_1\in\Sym_{\lam}$, then $f_{\tlam\tlam}T_{w_1}=u^{2\ell(w_1)}f_{\tlam\tlam}$ by\,\, \cite[Proposition 2.7]{M:gendeg}. If $w_2\in\Sym_{\lam'}$, then $T_{w_2}f_{\tllam\tllam}=(-1)^{\ell(w_2)}f_{\tllam\tllam}$ by\,\, \cite[Proposition 2.7]{M:gendeg} again.

By the above discussion, we have that $$\begin{aligned}
X_{\emptyset}&=\sum_{d\in\mathcal{D}_{\lam,\lam'}}\sum_{w\in\Sym_{\lam}d(\mathcal{D}_{\lam d\cap\lam'}\cap\Sym_{\lam'})}u^{-\ell(w)}T_w\\
&=\sum_{d\in\mathcal{D}_{\lam,\lam'}}u^{-\ell(d)}\biggl(\sum_{w_1\in\Sym_{\lam}}u^{-\ell(w_1)}T_{w_1}\biggr)T_d
\biggl(\sum_{w_2\in\mathcal{D}_{\lam d\cap\lam'}\cap\Sym_{\lam'}}u^{-\ell(w_2)}T_{w_2}\biggr)\\
\end{aligned}
$$
It follows that $$\begin{aligned}
&\quad\,f_{\tlam\tlam}X_{\emptyset}f_{\tllam\tllam}\\
&=\bigl(\sum_{w_1\in\Sym_{\lam}}u^{\ell(w_1)}\bigr)\sum_{d\in\mathcal{D}_{\lam,\lam'}}
\biggl(\sum_{w_2\in\mathcal{D}_{\lam d\cap\lam'}\cap\Sym_{\lam'}}(-u)^{-\ell(w_2)}u^{-\ell(d)}\biggr)
f_{\tlam\tlam}T_d f_{\tllam\tllam}
\end{aligned}
$$

Let $d\in\mathcal{D}_{\lam,\lam'}$. We claim that if $d^{-1}\Sym_{\lam}d\cap\Sym_{\lam'}\neq\{1\}$, then
$f_{\tlam\tlam}T_d f_{\tllam\tllam}=0$. In fact, assume that $1\neq z\in d^{-1}\Sym_{\lam}d\cap\Sym_{\lam'}\neq\{1\}$. We write $z=d^{-1}z_1d$, where $z_1\in\Sym_{\lam}$. Therefore, we get that $$\begin{aligned}
&\quad\,u^{2\ell(z_1)}f_{\tlam\tlam}T_d f_{\tllam\tllam}=f_{\tlam\tlam}T_{z_1}T_d f_{\tllam\tllam}=
f_{\tlam\tlam}T_{z_1d}f_{\tllam\tllam}=f_{\tlam\tlam}T_{dz}f_{\tllam\tllam}\\
&=(-1)^{\ell(z)}f_{\tlam\tlam}T_{d}f_{\tllam\tllam},
\end{aligned}
$$
and hence $(u^{2\ell(z_1)}-(-1)^{\ell(z)})f_{\tlam\tlam}T_{d}f_{\tllam\tllam}$=0. Since $z\neq 1$ and hence $z_1\neq 1$, it follows that $u^{2\ell(z_1)}-(-1)^{\ell(z)}\neq 0$, and hence $f_{\tlam\tlam}T_{d}f_{\tllam\tllam}$=0 as required. This proves our claim.

As a result, we can deduce that $$
\begin{aligned}
&\quad\,f_{\tlam\tlam}X_{\emptyset}f_{\tllam\tllam}\\
&=\bigl(\sum_{w_1\in\Sym_{\lam}}u^{\ell(w_1)}\bigr)\sum_{\substack{d\in\mathcal{D}_{\lam,\lam'}\\ d^{-1}\Sym_{\lam}d\cap\Sym_{\lam'}=\{1\}}}\bigl(\sum_{w_2\in\mathcal{D}_{\lam d\cap\lam'}\cap\Sym_{\lam'}}(-u)^{-\ell(w_2)}\bigr)
u^{-\ell(d)}f_{\tlam\tlam}T_d f_{\tllam\tllam}
\end{aligned}
$$

On the other hand, it is well-known that $\Sym_{\lam}w_{\lam}\Sym_{\lam'}$ is the unique double coset in $\Sym_{\lam}\!\setminus\!\Sym_n/\Sym_{\lam'}$ which has the trivial intersection property (see\,\, \cite[Proof of Lemma 4.1]{DJ1}), i.e., $w_{\lam}^{-1}\Sym_{\lam}w_{\lam}\cap\Sym_{\lam'}=\{1\}$. In particular, $\mathcal{D}_{\lam w_{\lam},\lam'}=\Sym_n$, and hence $$f_{\tlam\tlam}X_{\emptyset}f_{\tllam\tllam}
=\bigl(\sum_{w_1\in\Sym_{\lam}}u^{\ell(w_1)}\bigr)\bigl(\sum_{w_2\in\Sym_{\lam'}}(-u)^{-\ell(w_2)}\bigr)
u^{-\ell(w_{\lam})}f_{\tlam\tlam}T_{w_{\lam}}f_{\tllam\tllam} .
$$

By \cite[Proposition 2.7]{M:gendeg}and \cite[Lemma 1.5]{DJ1}, we can deduce that $$
f_{\tlam\tlam}T_{w_{\lam}}=f_{\tlam\tllam}+\sum_{w_{\lam}>z, \tlam z\in\Std(\lam)}a_z f_{\tlam,\tlam z},
$$
where $a_z\in\Q(u^2)$ for each $z$. In particular, $$
f_{\tlam\tlam}T_{w_{\lam}}f_{\tllam\tllam}=\gamma_{\tlam}f_{\tlam\tllam}\neq 0 .
$$
Note also that both $\sum_{w_1\in\Sym_{\lam}}u^{\ell(w_1)}$ and $\sum_{w_2\in\Sym_{\lam'}}(-u)^{-\ell(w_2)}$ are nonzero because they have leading terms equal to $u^{\ell(w_{\lam,0})}$ and $(-u)^{-\ell(w_{\lam',0})}$, where $w_{\lam,0}$ and $w_{\lam',0}$ are the unique longest elements in $\Sym_{\lam}$ and $\Sym_{\lam'}$ respectively. It follows that $$f_{\tlam\tlam}X_{\emptyset}f_{\tllam\tllam}\neq 0, $$ as required. This completes the proof of the lemma.
\end{proof}

\begin{cor} \label{lbcor} We have that $$
\sum_{\lam\in\mathcal{P}_n}\#\Std(\lam)=\#\{w\in\Sym_n|w^2=1\} .
$$
In particular, $$
\dim_{\Q(u)}\mathcal{H}^{\Q(u)}X_{\emptyset}\geq\#\{w\in\Sym_n|w^2=1\}.
$$
\end{cor}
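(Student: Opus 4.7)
The plan is to deduce the combinatorial identity
$$\sum_{\lam\in\mathcal{P}_n}\#\Std(\lam)=\#\{w\in\Sym_n\mid w^2=1\}$$
from the Robinson--Schensted correspondence, and then read off the dimension bound from Lemma~\ref{lb}. Specifically, the Robinson--Schensted correspondence provides a bijection
$$\Sym_n\;\stackrel{\sim}{\longrightarrow}\;\bigsqcup_{\lam\in\mathcal{P}_n}\Std(\lam)\times\Std(\lam),\qquad w\mapsto (P(w),Q(w)),$$
so counting pairs on the right gives $n!=\sum_{\lam}\#\Std(\lam)^2$. The classical symmetry theorem of Schützenberger asserts that $(P(w^{-1}),Q(w^{-1}))=(Q(w),P(w))$. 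Hence $w$ is an involution (i.e.\ $w^{-1}=w$) if and only if $P(w)=Q(w)$, which means involutions biject with the diagonal $\bigsqcup_{\lam}\{(T,T)\mid T\in\Std(\lam)\}$. Counting this diagonal yields the desired identity.

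The second assertion is then immediate: by Lemma~\ref{lb},
$$\dim_{\Q(u)}\mathcal{H}^{\Q(u)}X_{\emptyset}\;\geq\;\sum_{\lam\in\mathcal{P}_n}\#\Std(\lam)\;=\;\#\{w\in\Sym_n\mid w^2=1\},$$
which is precisely the claim.

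There is no real obstacle in this step: the RSK correspondence and Schützenberger's symmetry theorem are classical, and the whole argument is just a two-line invocation of them combined with Lemma~\ref{lb}. The only choice to make is whether to cite RSK/Schützenberger as a black box (the natural option here, since the heavy lifting of this section has already been done in Lemma~\ref{lb}) or to reprove the identity by, say, a recursive argument on $n$ using the observation that an involution in $\Sym_n$ either fixes $n$ or pairs $n$ with some $j<n$ (which mirrors the recursion for $\sum_{\lam}\#\Std(\lam)$ via removable corners). The cleanest presentation is to invoke RSK directly.
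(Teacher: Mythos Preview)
Your proposal is correct and matches the paper's own proof essentially line for line: the paper also invokes the Robinson--Schensted bijection $w\mapsto(P(w),Q(w))$ together with the symmetry $Q(w)=P(w^{-1})$ to identify involutions with single standard tableaux, and then appeals to Lemma~\ref{lb} for the dimension bound. The only cosmetic difference is that the paper cites \cite{Ar} for RSK rather than naming Sch\"utzenberger explicitly.
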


\begin{proof} Let $$\begin{aligned}\pi&: \Sym_n\rightarrow\{(\s,\t)|\s,\t\in\Std(\lam),\lam\in\mathcal{P}_n\}\\
&\quad w\mapsto (P(w),Q(w))
\end{aligned}$$
be the Robinson-Schensted correspondence, cf. \cite{Ar}. By definition, $\pi$ is a bijection onto the set
$\{(\s,\t)|\s,\t\in\Std(\lam),\lam\in\mathcal{P}_n\}$, $Q(w)=P(w^{-1})$ for each $w\in\Sym_n$. It follows that $\pi$ induces a bijection between the set $I_\ast$ of the involutions in $\Sym_n$ and the set $\sqcup_{\lam\in\mathcal{P}_n}\Std(\lam)$. In particular, $\sum_{\lam\in\mathcal{P}_n}\#\Std(\lam)=\#\{w\in\Sym_n|w^2=1\}$, as required. This proves the first part of the corollary. The second part of the corollary follows from Lemma \ref{lb}.
\end{proof}

\bigskip
\section{Proof of Lusztig's Conjecture \ref{LC} when $\ast=\text{id}$ and $W=\Sym_n$}

In this section, we shall give the main result of this paper. That is, a proof of Lusztig's Conjecture \ref{LC} when $\ast=\text{id}$ and $W=\Sym_n$. Recall that $\{a_w|w^2=1,w\in\Sym_n\}$ is an $\mathcal{A}$-basis of $M$.

\begin{lem} \label{keylem1} The map $a_1\mapsto X_{\emptyset}$ can be extended to a well-defined $\Q(u)$-linear map $\eta_0$
from $\Q(u)\otimes_{\mathcal{A}}M$ to $\mathcal{H}^{\Q(u)}X_{\emptyset}$ such that for any $w\in I_{\ast}$ and any reduced $I_\ast$-expression $\sigma=(s_{j_1},\cdots,s_{j_k})$ for $w$, $$
\eta_0(a_w)=\theta_{\sigma}(X_{\emptyset}):=\theta_{\sigma,1}\circ\theta_{\sigma,2}\circ\cdots\circ\theta_{\sigma,k}(X_{\emptyset}),
$$
where for each $1\leq t\leq k$, if $$s_{j_t}(s_{j_{t+1}}\ltimes s_{j_{t+2}}\ltimes\cdots\ltimes s_{j_k})\neq (s_{j_{t+1}}\ltimes s_{j_{t+2}}\ltimes\cdots\ltimes s_{j_k})s_{j_t}>(s_{j_{t+1}}\ltimes s_{j_{t+2}}\ltimes\cdots\ltimes s_{j_k}),$$ then we define $\theta_{\sigma,t}:=T_{s_{j_t}}$; while if $$
s_{j_t}(s_{j_{t+1}}\ltimes s_{j_{t+2}}\ltimes\cdots\ltimes s_{j_k})=(s_{j_{t+1}}\ltimes s_{j_{t+2}}\ltimes\cdots\ltimes s_{j_k})s_{j_t}>(s_{j_{t+1}}\ltimes s_{j_{t+2}}\ltimes\cdots\ltimes s_{j_k}),$$
then we define $\theta_{\sigma,t}:=(T_{s_{j_t}}-u)/(u+1)$.
\end{lem}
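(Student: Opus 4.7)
The plan is to establish well-definedness: for any $w \in I_\ast$ and any two reduced $I_\ast$-expressions $\sigma, \sigma'$ for $w$, we must have $\theta_\sigma(X_\emptyset) = \theta_{\sigma'}(X_\emptyset)$. Once this is shown, setting $\eta_0(a_w) := \theta_\sigma(X_\emptyset)$ and extending by $\Q(u)$-linearity via the $\mathcal{A}$-basis $\{a_w\}_{w \in I_\ast}$ defines $\eta_0$. Since each $\theta_{\sigma,t}$ is simply left-multiplication by an element of $\mathcal{H}^{\Q(u)}$, the composite equals $h_\sigma X_\emptyset$ for some $h_\sigma \in \mathcal{H}^{\Q(u)}$, and by Theorem \ref{mainthm0} it suffices to verify $h_\sigma X_\emptyset = h_{\sigma'} X_\emptyset$ whenever $\sigma'$ arises from $\sigma$ by a single braid $I_\ast$-transformation.

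I would treat the three types of transformations from Definition \ref{braid1} in turn. For the interior commutation $(\ldots, s_b, s_c, \ldots) \leftrightarrow (\ldots, s_c, s_b, \ldots)$ with $|b-c|>1$, only two local factors are affected; Corollary \ref{braid0cor2} enumerates exactly four cases for how the ``commuting'' type $(T_s-u)/(u+1)$ versus ``non-commuting'' type $T_s$ of each factor is determined by the tail, and in every case a short calculation using $T_b T_c = T_c T_b$ shows the two-factor product is invariant, giving even $h_\sigma = h_{\sigma'}$. For the interior braid $(s_j, s_{j+1}, s_j) \leftrightarrow (s_{j+1}, s_j, s_{j+1})$, Corollary \ref{braid0cor} reduces the analysis to three cases for the triple of types; in each case I would verify equality of the local three-factor products using the Hecke braid relation $T_j T_{j+1} T_j = T_{j+1} T_j T_{j+1}$ together with the quadratic relation $(T_s - u)(T_s + 1) = 0$.

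The end-swap $(\ldots, s_k, s_{k+1}) \leftrightarrow (\ldots, s_{k+1}, s_k)$ is of different character: the trailing $1 \in I_\ast$ forces the final operator to be of ``commuting'' type and the penultimate to be of ``non-commuting'' type, and the two types are interchanged under the swap. After clearing the $1/(u+1)$ factor, the requisite identity reduces to $(T_k T_{k+1} - T_{k+1} T_k) X_\emptyset = u(T_k - T_{k+1}) X_\emptyset$, which I would prove by expanding $X_\emptyset = \sum_{x^2=1} u^{-\ell(x)} T_x$ and applying the explicit action formulas of Theorem \ref{LVaction} (or equivalently by exploiting the involution condition $x = x^{-1}$ built into the definition of $X_\emptyset$ to pair up terms). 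The main obstacle is the interior braid case: although Corollary \ref{braid0cor} reduces it to three cases, each demands a somewhat delicate Hecke-algebra computation blending the braid relation with the quadratic relation; by contrast, the commutation case is essentially immediate, and the end-swap case reduces to a single clean identity about $X_\emptyset$.
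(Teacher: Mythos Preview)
Your overall strategy---reduce to single braid $I_\ast$-transformations via Theorem \ref{mainthm0} and treat the three types separately---is exactly the paper's, and your handling of the commutation and interior-braid moves is essentially right. In fact the interior braid is easier than you think: in each of the three subcases of Corollary \ref{braid0cor} the local three-factor products agree \emph{as elements of $\mathcal{H}^{\Q(u)}$} by the braid relation $T_jT_{j+1}T_j=T_{j+1}T_jT_{j+1}$ alone (e.g.\ $\frac{T_j-u}{u+1}T_{j+1}T_j=\frac{1}{u+1}(T_jT_{j+1}T_j-uT_{j+1}T_j)=\frac{1}{u+1}(T_{j+1}T_jT_{j+1}-uT_{j+1}T_j)=T_{j+1}T_j\frac{T_{j+1}-u}{u+1}$); no quadratic relation is needed, and one gets $h_\sigma=h_{\sigma'}$ outright.

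The genuine gap is in the end-swap. First, your expansion $X_\emptyset=\sum_{x^2=1}u^{-\ell(x)}T_x$ is wrong: with $\ast=\text{id}$ the defining condition $x^\ast=x$ is vacuous, so
\[
X_\emptyset=\sum_{x\in\Sym_n}u^{-\ell(x)}T_x
\]
is the full sum over $\Sym_n$; you have confused the fixed-point set $\{x:x^\ast=x\}$ with $I_\ast=\{x:x^\ast=x^{-1}\}$. Second, Theorem \ref{LVaction} gives the $\mathcal{H}_u$-action on the module $M$, not left multiplication inside $\mathcal{H}_u$; it says nothing about $T_sX_\emptyset$. With the wrong description of $X_\emptyset$ and an inapplicable tool, your proposed verification of $(T_kT_{k+1}-T_{k+1}T_k)X_\emptyset=u(T_k-T_{k+1})X_\emptyset$ does not go through as written.

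The paper's argument for this case exploits exactly that $X_\emptyset$ is the \emph{full} Poincar\'e sum. Writing $a:=j_k$, one uses the coset factorisation
\[
X_\emptyset=\Bigl(\sum_{w\in\langle s_a,s_{a+1}\rangle}u^{-\ell(w)}T_w\Bigr)\Bigl(\sum_{z\in\mathcal{D}}u^{-\ell(z)}T_z\Bigr),
\]
where $\mathcal{D}$ is the set of minimal-length right coset representatives of the parabolic $\langle s_a,s_{a+1}\rangle\cong\Sym_3$ in $\Sym_n$. Since the operators $T_{s_a}\frac{T_{s_{a+1}}-u}{u+1}$ and $T_{s_{a+1}}\frac{T_{s_a}-u}{u+1}$ act by left multiplication, the right factor is inert and the identity reduces to a direct six-term computation inside the rank-two parabolic subalgebra. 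So, contrary to your assessment, the end-swap is the only case that genuinely uses the structure of $X_\emptyset$, while the interior braid is the routine one.
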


\begin{proof} It suffices to show that the operator $\theta_{\sigma}:=\theta_{\sigma,1}\circ\theta_{\sigma,2}\circ\cdots\circ\theta_{\sigma,k}$ depends only on $w$ and not on the choice of the reduced $I_\ast$-expression $\sigma=(s_{j_1},\cdots,s_{j_k})$ for any given $w\in I_\ast$.

By Theorem \ref{mainthm0}, it suffices to show that $\theta_{\sigma}$ does not change under any one of the three basic braid $I_\ast$-transformations as introduced in Definition \ref{braid1}. So there are three possibilities:

\smallskip
{\it Case 1.} $|j_t-j_{t+1}|>1$ for some $1\leq t<k$, and the braid $I_\ast$-transformation sends $$
\sigma=(s_{j_1},\cdots,s_{j_{t-1}},s_{j_{t}},s_{j_{t+1}},s_{j_{t+2}},\cdots,s_{j_k})
$$ to $\tau:=(s_{j_1},\cdots,s_{j_{t-1}},s_{j_{t+1}},s_{j_{t}},s_{j_{t+2}},\cdots,s_{j_k})$. In this case, it follows from Corollary \ref{braid0cor2} and the fact that $T_{j_t}T_{j_{t+1}}=T_{j_{t+1}}T_{j_t}$ that $$
\theta_{\sigma,t}\circ\theta_{\sigma,{t+1}}\circ\cdots\circ\theta_{\sigma,k}(X_{\emptyset})=
\theta_{\tau,{t}}\circ \theta_{\tau,{t+1}}\circ\cdots\circ\theta_{\tau,k}(X_{\emptyset}).
$$
Hence $\theta_{\sigma}(X_{\emptyset})=\theta_{\tau}(X_{\emptyset})$ as required.

\smallskip
{\it Case 2.} $j_{t-2}=j_{t}=j_{t-1}\pm 1$ for some $3\leq t\leq k$, and the braid $I_\ast$-transformation sends $$
\sigma=(s_{j_1},\cdots,s_{j_{t-3}},s_{j_{t-2}},s_{j_{t-1}},s_{j_{t}},s_{j_{t+1}},s_{j_{t+2}},\cdots,s_{j_k})
$$ to $\tau:=(s_{j_1},\cdots,s_{j_{t-3}},s_{j_{t-1}},s_{j_{t}},s_{j_{t-1}},s_{j_{t+1}},s_{j_{t+2}},\cdots,s_{j_k})$.

Note that $t\neq k$ because otherwise $j_{k-2}=j_k=j_{k-1}\pm 1$ would imply that $s_{j_{k-2}}\ltimes s_{j_{k-1}}\ltimes s_{j_{k}}$ is not a reduced $I_\ast$-expression, a contradiction. Therefore, we must have that
$3\leq t\leq k-1$. In this case, we set $$\begin{aligned}
w:&=s_{j_{t+1}}\ltimes s_{j_{t+2}}\ltimes\cdots\ltimes s_{j_k}.\\
Z_0:&=\theta_{\sigma,{t+1}}\circ\theta_{\sigma,{t+2}}\circ\cdots\circ\theta_{\sigma,k}(X_{\emptyset}).
\end{aligned}$$
Then by Corollary \ref{braid0cor}, there are three subcases:

\smallskip
{\it Subcase 1.} $s_{j_t}w\neq ws_{j_t}$, $s_{j_{t-1}}s_{j_t}ws_{j_t}\neq s_{j_t}ws_{j_t}s_{j_{t-1}}$, $$
s_{j_t}s_{j_{t-1}}s_{j_t}ws_{j_t}s_{j_{t-1}}\neq s_{j_{t-1}}s_{j_t}ws_{j_t}s_{j_{t-1}}s_{j_t},$$
and $s_{j_{t-1}}w\neq ws_{j_{t-1}}$, $s_{j_t} s_{j_{t-1}} w s_{j_{t-1}}\neq s_{j_{t-1}} w s_{j_{t-1}} s_{j_t}$, $$
s_{j_{t-1}}s_{j_t}s_{j_{t-1}}ws_{j_{t-1}}s_{j_t}\neq s_{i_t}s_{j_{t-1}}ws_{i_{t-1}}s_{j_t}s_{j_{t-1}}.
$$

It follows from Lemma \ref{braid0} and Corollary \ref{braid0cor} that $$\begin{aligned}
&\quad\,\theta_{\sigma,{t-2}}\circ\theta_{\sigma,{t-1}}\circ\theta_{\sigma,{t}}\circ\theta_{\sigma,{t+1}}
\circ\theta_{\sigma,{t+2}}
\circ\cdots\circ\theta_{\sigma,k}
(X_{\emptyset})\\
&=T_{j_{t}}T_{j_{t-1}}T_{j_{t}}Z_0=T_{j_{t-1}}T_{j_{t}}T_{j_{t-1}}Z_0\\
&=\theta_{\tau,{t-2}}\circ \theta_{\tau,{t-1}}\circ \theta_{\tau,{t}}\circ\theta_{\tau,{t+1}}\circ\theta_{\tau,{t+2}}\circ\cdots\circ
\theta_{\tau,k}(X_{\emptyset}).
\end{aligned}
$$
Hence $\theta_{\sigma}(X_{\emptyset})=\theta_{\tau}(X_{\emptyset})$ as required.

\smallskip
{\it Subcase 2.} $s_{j_t}w\neq ws_{j_t}$, $s_{j_{t-1}}s_{j_t}ws_{j_t}\neq s_{j_t}ws_{j_t}s_{j_{t-1}}$, $$
s_{j_t}s_{j_{t-1}}s_{j_t}ws_{j_t}s_{j_{t-1}}= s_{j_{t-1}}s_{j_t}ws_{j_t}s_{j_{t-1}}s_{j_t},$$
and $s_{j_{t-1}}w=ws_{j_{t-1}}$, $s_{j_t}s_{j_{t-1}}w\neq s_{j_{t-1}}ws_{j_t}$, $$
s_{j_{t-1}}s_{j_t}s_{j_{t-1}}ws_{j_t}\neq s_{j_t}s_{j_{t-1}}ws_{j_t}s_{j_{t-1}}.$$

It follows from Lemma \ref{braid0} and Corollary \ref{braid0cor} that $$\begin{aligned}
&\quad\,\theta_{\sigma,t-2}\circ\theta_{\sigma,t-1}\circ\theta_{\sigma,t}\circ\theta_{\sigma,t+1}
\circ\theta_{\sigma,t+2}
\circ\cdots\circ\theta_{\sigma,k}
(X_{\emptyset})\\
&=\frac{T_{j_{t}}-u}{u+1}T_{j_{t-1}}T_{j_{t}}Z_0=T_{j_{t-1}}T_{j_{t}}\frac{T_{j_{t-1}}-u}{u+1}Z_0\\
&=\theta_{\tau,{t-2}}\circ \theta_{\tau,{t-1}}\circ \theta_{\tau,{t}}\circ\theta_{\tau,{t+1}}\circ\theta_{\tau,{t+2}}\circ\cdots\circ\theta_{\tau,k}(X_{\emptyset}).
\end{aligned}
$$
Hence $\theta_{\sigma}(X_{\emptyset})=\theta_{\tau}(X_{\emptyset})$ as required.

\smallskip
{\it Subcase 3.} $s_{j_t}w=ws_{j_t}$, $s_{j_{t-1}}s_{j_t}w\neq s_{j_t}ws_{j_{t-1}}$, $$
s_{j_t}s_{j_{t-1}}s_{j_t}ws_{j_{t-1}}\neq s_{j_{t-1}}s_{j_t}ws_{j_{t-1}}s_{j_t},$$
and $s_{j_{t-1}}w\neq ws_{j_{t-1}}$, $s_{j_t}s_{j_{t-1}}ws_{j_{t-1}}\neq s_{j_{t-1}}ws_{j_{t-1}}s_{j_t}$, $$
s_{j_{t-1}}s_{j_t}s_{j_{t-1}}ws_{j_{t-1}}s_{j_t}=s_{j_t}s_{j_{t-1}}ws_{j_{t-1}}s_{j_t}s_{j_{t-1}} .
$$

It follows from Lemma \ref{braid0} and Corollary \ref{braid0cor} that $$\begin{aligned}
&\quad\,\theta_{\sigma,t-2}\circ\theta_{\sigma,t-1}\circ\theta_{\sigma,t}\circ\theta_{\sigma,t+1}
\circ\theta_{\sigma,t+2}
\circ\cdots\circ\theta_{\sigma,k}
(X_{\emptyset})\\
&=T_{j_{t}}T_{j_{t-1}}\frac{T_{j_{t}}-u}{u+1}Z_0=\frac{T_{j_{t-1}}-u}{u+1}T_{j_{t}}T_{j_{t-1}}Z_0\\
&=\theta_{\tau,{t-2}}\circ \theta_{\tau,{t-1}}\circ \theta_{\tau,{t}}\circ\theta_{\tau,{t+1}}\circ\theta_{\tau,{t+2}}\circ\cdots\circ\theta_{\tau,k}(X_{\emptyset}).
\end{aligned}
$$
Hence $\theta_{\sigma}(X_{\emptyset})=\theta_{\tau}(X_{\emptyset})$ as required.

\smallskip
{\it Case 3.} $|j_{k-1}-j_k|=1$, and the braid $I_\ast$-transformation sends $$
\sigma=(s_{j_1},\cdots,s_{j_{k-3}},s_{j_{k-2}},s_{j_{k-1}},s_{j_{k}})
$$ to $\tau:=(s_{j_1},\cdots,s_{j_{k-3}},s_{j_{k-2}},s_{j_{k}},s_{j_{k-1}})$. Without loss of generality, we can assume that $j_{k-1}=j_k+1$. For simplicity, we set $a:=j_k$, then $j_{k-1}=a+1$.

Let $\mathcal{D}(a)$ be the set of minimal length distinguished right coset representatives of $\Sym_{\{a,a+1\}}$ in $\Sym_n$. Then it is clear that \begin{equation}\label{2terms}
X_{\emptyset}=\biggl(\sum_{w\in\Sym_{\{a,a+1\}}}u^{-\ell(w)}T_w\biggr)\biggl(\sum_{z\in\mathcal{D}(a)}u^{-\ell(z)}T_z
\biggr).
\end{equation}
In this case, all we want to do is to show that $$
T_{s_a}\frac{T_{s_{a+1}}-u}{u+1}X_{\emptyset}=T_{s_{a+1}}\frac{T_{s_a}-u}{u+1}X_{\emptyset}.$$
By (\ref{2terms}), it suffices to show that $$
T_{s_a}\frac{T_{s_{a+1}}-u}{u+1}\sum_{w\in\Sym_{\{a,a+1\}}}u^{-\ell(w)}T_w=
T_{s_{a+1}}\frac{T_{s_a}-u}{u+1}\sum_{w\in\Sym_{\{a,a+1\}}}u^{-\ell(w)}T_w .
$$
By direct verification, we can get that $$\begin{aligned}
&\quad\,T_{s_a}\frac{T_{s_{a+1}}-u}{u+1}\sum_{w\in\Sym_{\{a,a+1\}}}u^{-\ell(w)}T_w=T_{s_a}\frac{T_{s_{a+1}}-u}{u+1}
\biggl(1+u^{-1}T_{s_a}+u^{-1}T_{s_{a+1}}\\
&\qquad +u^{-2}T_{s_a}T_{s_{a+1}}+u^{-2}T_{s_{a+1}}T_{s_a}+u^{-3}T_{s_a}T_{s_{a+1}}T_{s_a}
\biggr)\\
&=(u-u^{-1})(T_{s_a}T_{s_{a+1}}+T_{s_{a+1}}T_{s_a})+(1+u+u^{-3}-u^{-2}-2u^{-1})T_{s_a}T_{s_{a+1}}T_{s_a}\\
&=T_{s_{a+1}}\frac{T_{s_{a}}-u}{u+1}\biggl(1+u^{-1}T_{s_a}+u^{-1}T_{s_{a+1}}+u^{-2}T_{s_a}T_{s_{a+1}}
+u^{-2}T_{s_{a+1}}T_{s_a}+u^{-3}T_{s_a}T_{s_{a+1}}T_{s_a}\biggr)\\
&=T_{s_{a+1}}\frac{T_{s_{a}}-u}{u+1}\sum_{w\in\Sym_{\{a,a+1\}}}u^{-\ell(w)}T_w ,
\end{aligned}
$$
as required. This completes the proof of the lemma.
\end{proof}

\begin{lem}\label{keylem2} With the notations as in Lemma \ref{keylem1}, the $\Q(u)$-linear map $\eta_0$ is a
left $\mathcal{H}^{\Q(u)}$-module homomorphism. In particular, $\eta_0=\eta$ is a well-defined surjective left $\mathcal{H}^{\Q(u)}$-homomorphism from $\Q(u)\otimes_{\mathcal{A}}M$ onto $\mathcal{H}^{\Q(u)}X_{\emptyset}$.
\end{lem}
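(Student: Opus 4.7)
Since $\eta_0$ is already $\Q(u)$-linear and $\mathcal{H}^{\Q(u)}$ is generated as an algebra by $\{T_s\mid s\in S\}$, it suffices to verify $\eta_0(T_s a_w)=T_s\eta_0(a_w)$ for every $s\in S$ and every $w\in I_\ast$. Theorem \ref{LVaction} expresses $T_s a_w$ by one of four formulas, according to whether $sw=ws$ and whether $sw>w$; I plan to check these four cases in turn, handling the two ascending ones ($sw>w$) directly from the recursive formula for $\eta_0$ provided by Lemma \ref{keylem1}, and then reducing the two descending ones ($sw<w$) to the ascending cases via Lemma \ref{square}.

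For the ascending cases, fix any reduced $I_\ast$-expression $\sigma=(s_{j_1},\dots,s_{j_k})$ for $w$; then $(s,\sigma)$ is a reduced $I_\ast$-expression for $s\ltimes w$. If $sw=ws>w$, then $s\ltimes w=sw$ and the very definition of $\theta_{(s,\sigma),1}$ in Lemma \ref{keylem1} yields $\eta_0(a_{sw})=\frac{T_s-u}{u+1}\eta_0(a_w)$, from which $u\,\eta_0(a_w)+(u+1)\eta_0(a_{sw})=T_s\eta_0(a_w)$ is immediate, matching $\eta_0(T_s a_w)$. If $sw\neq ws>w$, then $s\ltimes w=sws$ and $\theta_{(s,\sigma),1}=T_s$, so $\eta_0(a_{sws})=T_s\eta_0(a_w)$, which again matches $\eta_0(T_s a_w)=\eta_0(a_{sws})$.

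For the descending cases, if $sw=ws<w$, I set $w':=sw$; then $w'\in I_\ast$, $\rho(w')=\rho(w)-1$, and $sw'=w's=w$ with $sw'>w'$, so the first ascending case applied to $w'$ gives $\eta_0(a_w)=\frac{T_s-u}{u+1}\eta_0(a_{sw})$. Applying $T_s$ to both sides and reducing with the quadratic relation $T_s^2=(u^2-1)T_s+u^2$ turns the left-hand side into $\frac{(u^2-u-1)T_s+u^2}{u+1}\eta_0(a_{sw})$, and a short algebraic manipulation (resubstituting $\eta_0(a_w)=\frac{T_s-u}{u+1}\eta_0(a_{sw})$) identifies this with $(u^2-u-1)\eta_0(a_w)+(u^2-u)\eta_0(a_{sw})=\eta_0(T_s a_w)$. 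If $sw\neq ws<w$, I set $w':=sws=s\ltimes w$; a direct check using $sw\neq ws$ shows that $w'\in I_\ast$, that $sw'\neq w's$ and $s\ltimes w'=w$ with $sw'>w'$. The second ascending case applied to $w'$ then gives $\eta_0(a_w)=T_s\eta_0(a_{sws})$, and multiplying by $T_s$ and invoking the quadratic relation yields $T_s\eta_0(a_w)=(u^2-1)\eta_0(a_w)+u^2\eta_0(a_{sws})=\eta_0(T_s a_w)$.

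This establishes that $\eta_0$ is a left $\mathcal{H}^{\Q(u)}$-module homomorphism, and it is automatically well defined by Lemma \ref{keylem1}. Surjectivity onto $\mathcal{H}^{\Q(u)}X_\emptyset$ is then immediate: on the one hand $\eta_0(a_1)=X_\emptyset$ (the empty $I_\ast$-expression yields the identity operator), so module linearity implies $\mathcal{H}^{\Q(u)}X_\emptyset\subseteq\eta_0(\Q(u)\otimes_{\mathcal{A}}M)$; on the other hand each $\eta_0(a_w)=\theta_\sigma(X_\emptyset)$ manifestly lies in $\mathcal{H}^{\Q(u)}X_\emptyset$, so equality holds. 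The only genuinely delicate step is the quadratic-relation manipulation in the descending cases, and this is purely mechanical: all the hard combinatorial input (well-definedness of $\theta_\sigma$ under braid $I_\ast$-transformations) has already been absorbed into Lemma \ref{keylem1}, which is why the final verification passes off so smoothly.
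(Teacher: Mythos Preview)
Your proof is correct and follows essentially the same route as the paper's: reduce to checking $\eta_0(T_s a_w)=T_s\eta_0(a_w)$ for each generator and each $w\in I_\ast$, handle the ascending cases $sw>w$ directly from the recursive definition of $\eta_0$ in Lemma~\ref{keylem1}, and in the descending cases pass to $w'=s\ltimes w$ and use the quadratic relation for $T_s$. The only cosmetic difference is that the paper packages the descending-case reduction as an induction on $\rho(w)$ (invoking the ``induction hypothesis'' to get $\eta_0(T_s a_{w'})=T_s\eta_0(a_{w'})$), whereas you observe more directly that the needed relation $\eta_0(a_w)=\theta_{(s,\sigma'),1}\,\eta_0(a_{w'})$ is already the definition of $\eta_0$ applied to the reduced $I_\ast$-expression $(s,\sigma')$ for $w$; the computations are otherwise identical.
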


\begin{proof} Once we can prove that $\eta_0$ is a left $\mathcal{H}^{\Q(u)}$-module homomorphism, then it follows immediately that $\eta$ is well-defined and $\eta_0=\eta$ because both of them send $a_1$ to $X_{\emptyset}$.

Since $\{a_w|w\in I_\ast\}$ is an $\mathcal{A}$-basis of $M$ (and hence a $\Q(u)$-basis of $\Q(u)\otimes_{\mathcal{A}}M$), in order to show that $\eta_0$ is a left $\mathcal{H}^{\Q(u)}$-module homomorphism, it suffices to show that for any
$w\in I_\ast$ and any $1\leq k<n$, \begin{equation}\label{keyeq}
\eta_0(T_{s_k} a_w)=T_{s_k}\eta_0(a_w) .
\end{equation}

We use induction on $\rho(w)$ to prove (\ref{keyeq}). If $\rho(w)=0$ then $w=1$. In this case, by the definition of $\eta_0$ in Lemma \ref{keylem1}, $$
\eta_0(T_{s_k} a_w)=\eta_0(T_{s_k} a_1)=\eta_0(a_{s_k})=T_{s_k}X_{\emptyset}=T_{s_k}\eta_0(a_1),
$$
as required.
In general, let $m\in\N$. Suppose that for any $1\leq k<n$ and any $w'\in I_\ast$ with $\rho(w')<m$, we have that $$ \eta_0(T_{s_k} a_{w'})=T_{s_k}\eta_0(a_{w'}) .
$$
Now let $w\in I_\ast$ with $\rho(w)=m$. There are two possibilities:

\smallskip
{\it Case 1.} $s_kw>w$. If $s_kw\neq ws_k$, then by the definition of $\eta_0$ in Lemma \ref{keylem1}, $$
\eta_0(T_{s_k} a_w)=\eta_0(a_{s_k\ltimes w})=T_{s_k}\eta_0(a_w),
$$
as required. If $s_kw=ws_k$, then by the definition of $\eta_0$ in Lemma \ref{keylem1}, $$
\eta_0(\frac{T_{s_k}-u}{u+1}a_w)=\eta_0(a_{s_k\ltimes w})=\frac{T_{s_k}-u}{u+1}\eta_0(a_w).
$$
It follows that $\eta_0(T_{s_k} a_w)=T_{s_k}\eta_0(a_w)$ still holds in this case.

\smallskip
{\it Case 2.} $s_kw<w$. By Corollary \ref{deletion2}, we can write $w=s_k\ltimes w'$ with $\rho(w)=\rho(w')+1$.
In particular, $\rho(w')=\rho(w)-1=m-1<m$. If $s_kw'\neq w's_k$, then by induction hypothesis and Theorem \ref{LVaction}, we have that $$
\eta_0(a_w)=\eta_0(T_{s_k}a_{w'})=T_{s_k}\eta_0(a_{w'}).
$$
It follows from induction hypothesis and Lemma \ref{keylem1} that $$\begin{aligned}
\eta_0(T_{s_k}a_w)&=\eta_0((T_{s_k})^2a_{w'})=\eta_0\bigl((u^2-1)T_{s_k}a_{w'}+u^2a_{w'}\bigr)\\
&=(u^2-1)\eta_0(T_{s_k}a_{w'})+u^2\eta_0(a_{w'})=(u^2-1)T_{s_k}\eta_0(a_{w'})+u^2\eta_0(a_{w'})\\
&=T_{s_k}T_{s_k}\eta_0(a_{w'})=T_{s_k}\eta_0(T_{s_k}a_{w'})=T_{s_k}\eta_0(a_w),
\end{aligned}
$$
as required.

If $s_kw'=w's_k$, then $$
\eta_0(a_w)=\eta_0(\frac{T_{s_k}-u}{u+1}a_{w'})=\frac{T_{s_k}-u}{u+1}\eta_0(a_{w'}).
$$
It follows from induction hypothesis and Lemma \ref{keylem1} that $$\begin{aligned}
\eta_0(T_{s_k}a_w)&=\eta_0(T_{s_k}\frac{T_{s_k}-u}{u+1}a_{w'})=\eta_0\bigl(\frac{(u^2-u-1)T_{s_k}}{u+1}a_{w'}
+\frac{u^2}{u+1}a_{w'}\bigr)\\
&=\frac{(u^2-u-1)}{u+1}\eta_0(T_{s_k}a_{w'})+\frac{u^2}{u+1}\eta_0(a_{w'})\\
&=\frac{(u^2-u-1)}{u+1}T_{s_k}\eta_0(a_{w'})+\frac{u^2}{u+1}\eta_0(a_{w'})\\
&=T_{s_k}\frac{T_{s_k}-u}{u+1}\eta_0(a_{w'})=T_{s_k}\eta_0(\frac{T_{s_k}-u}{u+1}a_{w'})=T_{s_k}\eta_0(a_w),
\end{aligned}
$$
as required.
\end{proof}

\begin{thm}\label{mainthm2} Lusztig's conjecture (\ref{LC}) is true in the case when $\ast=\text{id}$ and $W=\Sym_n$.
\end{thm}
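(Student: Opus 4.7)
The proof will combine Lemma \ref{keylem2} with the dimension count of Corollary \ref{lbcor}. Specifically, by Lemma \ref{keylem2} there is a well-defined surjective left $\mathcal{H}^{\Q(u)}$-module homomorphism
$$\eta: \Q(u)\otimes_{\mathcal{A}}M\twoheadrightarrow \mathcal{H}^{\Q(u)}X_{\emptyset}$$
with $\eta(a_1)=X_{\emptyset}$. Since $\{a_w\mid w\in I_\ast\}$ is an $\mathcal{A}$-basis of $M$, the domain has $\Q(u)$-dimension equal to $\#I_\ast=\#\{w\in\Sym_n\mid w^2=1\}$.

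The plan is then to use surjectivity together with the inequality
$$\dim_{\Q(u)}\mathcal{H}^{\Q(u)}X_{\emptyset}\geq \#\{w\in\Sym_n\mid w^2=1\}$$
from Corollary \ref{lbcor}. Surjectivity of $\eta$ forces
$$\dim_{\Q(u)}\mathcal{H}^{\Q(u)}X_{\emptyset}\leq \dim_{\Q(u)}\Q(u)\otimes_{\mathcal{A}}M=\#I_\ast,$$
so we obtain equality of dimensions. A surjection between finite-dimensional $\Q(u)$-vector spaces of the same dimension is an isomorphism, so $\eta$ is the required $\mathcal{H}^{\Q(u)}$-module isomorphism.

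For the uniqueness assertion in Conjecture \ref{LC}, I would observe that $a_1$ generates $\Q(u)\otimes_{\mathcal{A}}M$ as an $\mathcal{H}^{\Q(u)}$-module: given any $w\in I_\ast$, pick a reduced $I_\ast$-expression $(s_{i_1},\dots,s_{i_k})$ and apply Theorem \ref{LVaction} inductively to see that $a_w$ lies in $\mathcal{H}^{\Q(u)}a_1$ (each ascent operation $T_{s}a_{w'}$ either yields $a_{s\ltimes w'}$ directly or a $\Q(u)$-combination of $a_{w'}$ and $a_{s\ltimes w'}$). Therefore any $\mathcal{H}^{\Q(u)}$-homomorphism sending $a_1\mapsto X_{\emptyset}$ is uniquely determined, and the existence and uniqueness of $\eta$ together establish Lusztig's conjecture in this case.

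The main obstacle has already been overcome in the preceding sections: establishing that $\eta$ is well-defined (Lemma \ref{keylem1}, which relies crucially on Theorem \ref{mainthm0}) and establishing the lower dimension bound (Corollary \ref{lbcor}, using the Young seminormal basis). The final theorem itself is a short combinatorial synthesis of these two pillars, so the proof should be only a few lines long.
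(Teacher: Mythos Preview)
Your proposal is correct and follows essentially the same route as the paper: combine the surjective $\mathcal{H}^{\Q(u)}$-homomorphism of Lemma~\ref{keylem2} with the lower dimension bound of Corollary~\ref{lbcor} to force $\eta$ to be an isomorphism. The only addition is that you explicitly spell out the uniqueness clause of Conjecture~\ref{LC} via cyclicity of $a_1$, which the paper's proof leaves implicit.
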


\begin{proof} By Lemma \ref{keylem2}, $\eta$ defines a surjective left $\mathcal{H}^{\Q(u)}$-module homomorphism from $\Q(u)\otimes_{\mathcal{A}}M$ onto $\mathcal{H}^{\Q(u)}X_{\emptyset}$. On the other hand, by Lemma
\ref{keylem1}, $$
\dim_{\Q(u)}\mathcal{H}^{\Q(u)}X_{\emptyset}\geq\#\{w\in\Sym_n|w^2=1\}=\dim_{\Q(u)}\Q(u)\otimes_{\mathcal{A}}M .
$$
It follows that $\eta$ must be a left $\mathcal{H}^{\Q(u)}$-module isomorphism.
\end{proof}

\bigskip

\end{document}